\documentclass[11pt]{article}
\usepackage{amscd,graphics,pgf,tikz,hyperref}
\usetikzlibrary{arrows}

\usepackage{amsfonts,amssymb,amscd,amsmath,enumerate,verbatim,calc,url}
\usepackage{amsthm, psfrag,latexsym,epsfig,mdwlist,graphicx}
\usepackage{times}  
\usepackage{hyperref}
\usepackage[capitalise, noabbrev]{cleveref}
\setlength{\parskip}{.2em}

\textwidth6in
\textheight8in
\topmargin-.5in
\evensidemargin0in
\oddsidemargin0in
\theoremstyle{plain}


\newtheorem{Theorem}{Theorem}[section]
\newtheorem{Lemma}[Theorem]{Lemma}
\newtheorem{Proposition}[Theorem]{Proposition}
\newtheorem{Corollary}[Theorem]{Corollary}
\theoremstyle{definition}
\newtheorem{Definition}[Theorem]{Definition}

\newtheorem{Remark}[Theorem]{Remark}
\newtheorem{Example}[Theorem]{Example}

\newtheorem{Question}[Theorem]{Question}


\newcommand{\coll}[1]{\searrow_{#1}}
\newcommand{\N}{\mathcal{N}}
\newcommand{\F}{\mathcal{F}}
\newcommand{\st}{\ : \ }
\newcommand{\sr}[1]{\wr_{#1}} 
\newcommand{\bfx}{\mathbf{x}}
\newcommand{\wh}{\widetilde{H}}

\newcommand{\link}{\mathrm{link}}
\newcommand{\sm}{\setminus}

\makeatletter
\newcommand{\subjclass}[2][1991]{%
  \let\@oldtitle\@title%
  \gdef\@title{\@oldtitle\footnotetext{#1 \emph{Mathematics subject classification.} #2}}%
}
\newcommand{\keywords}[1]{%
  \let\@@oldtitle\@title%
  \gdef\@title{\@@oldtitle\footnotetext{\emph{Key words and phrases.} #1.}}%
}
\makeatother


\def\C{\mathcal{C}}
\def\Simp{\mathrm{Simp}}
\def\facets{\mathrm{Facets}}

\def\xb{{\bold x}}

%

\begin{document}

 \title{Chordality, $d$-collapsibility, and
  componentwise linear ideals}

 \author{Mina Bigdeli{\thanks{School of Mathematics, Institute for
     Research in Fundamental Sciences (IPM), P. O. Box: 19395-5746, Tehran, Iran,
     mina.bigdeli98@gmail.com, mina.bigdeli@ipm.ir. Research supported by IPM  and also partially supported by National Science Foundation under Grant No. DMS-1440140 while the first author was resident at MSRI during the Spring 2017 semester.}}{}~ and  Sara Faridi\thanks{Department of
     Mathematics and Statistics, Dalhousie University, Halifax,
     Canada, faridi@mathstat.dal.ca. Research supported by NSERC.}}
     
\subjclass[2010]{Primary 13D02, 13F55; Secondary 05E45, 05E40.}
\keywords{Betti number, Chordality, Collapsibility, Componentwise linear,  Linear resolution, Simplicial Complex, Stanley-Reisner ideal}

\maketitle


\begin{abstract} 
    Using the concept of $d$-collapsibility from combinatorial
    topology, we define chordal simplicial complexes and show that
    their Stanley-Reisner ideals are componentwise linear. Our
    construction is inspired by and an extension of ``chordal
    clutters'' which was defined by Bigdeli, Yazdan Pour and
    Zaare-Nahandi in 2017,  and characterizes Betti tables
      of all ideals with linear resolution in a polynomial ring.

   We show $d$-collapsible and $d$-representable complexes produce
componentwise linear ideals for appropriate $d$.  Along the way,
we prove that there are generators that when added to the ideal, do not
change Betti numbers in certain degrees.

 We then show that large classes of componentwise linear ideals, such
 as Gotzmann ideals and square-free stable ideals have chordal
 Stanley-Reisner complexes, that Alexander duals of vertex
 decomposable complexes are chordal, and conclude that the Betti table
 of every componentwise linear ideal is identical to that of the
 Stanley-Reisner ideal of a chordal complex.
    \end{abstract}


\section*{Introduction}

Chordal simplicial complexes, as we call them here, arise from work of
Bigdeli, Yazdanpour and Zaare-Nahandi~\cite{BYZ} in 2017, where they
defined chordal clutters in an attempt to give a combinatorial
description of square-free monomial ideals that have linear resolution
over all fields. The term ``chordal" and the general approach stem
from Fr\"oberg's 1990 paper~\cite{Fr} in which ideals generated by
degree 2 monomials are characterized in terms of chordal
graphs. Fr\"oberg's work initiated investigations by many authors find
similar criteria for ideals with linear resolution generated by
monomials of higher degree, which led to generalizations of
chordality: the classes defined by Van Tuyl and Villarreal~\cite{VTV}
in~2008, Emtander~\cite{Em} in~2010, Woodroofe~\cite{Wo} in~2011, all
produce ideals with linear resolution over all fields, and all these
classes were shown to be contained in the class of chordal simplicial
complexes in~\cite{BYZ} (which we later found is
  equivalent to a class of simplicial complexes appearing in Cordovil,
  Lemos, and Sales~\cite{CLS} in~2009).

 On the other hand Connon and Faridi~\cite{CF1} in 2013
  gave a more general definition of chordality by focusing on
  necessary conditions for vanishing of simplicial homology, which
  forced a simplicial complex producing a linear resolution in any
  characteristic to belong to their class, and a more restrictive
  definition in~\cite{CF2} in 2015 characterized all simplicial
  complexes whose ideals have linear resolution over fields of
  characteristic~2. Adiprasito, Nevo, and Samper's
  work~\cite{ANS} in 2016 characterized chordality by checking a
  smaller interval for the vanishing of simplicial homology, giving
  a homological characterization of chordality.

  Since betti numbers depend on the characteristic of the ground
  field, for a combinatorial characterization of chordality, one
  should expect a definition that produces ideals that have linear
  resolution over all fields.  So far neither of the above classes
  combinatorially characterizes monomial ideals with linear resolution,
  even when one considers ideals that have linear resolution over all
  fields.

  However, it was shown by Bigdeli, Herzog, Yazdanpour and
  Zaare-Nahandi~\cite{BHYZ} that every Betti table of a graded
  ideal with linear resolution is the Betti table of an ideal coming
  from a chordal clutter, as defined in~\cite{BYZ}.

In this paper, we adapt the concept of chordal clutters
  from~\cite{BYZ} and change the perspective from clutters to simplicial
complexes. As a result, we show that chordality of the
Stanley-Reisner complex of an ideal generated in degree $d+1$ is
equivalent to $d$-collapsibility, a notion well-known and well-used in
algebraic topology and combinatorics which has specific homological
consequences. Among other things, this perspective allows us to:

\begin{itemize}

\item show that $d$-chordal simplicial complexes (one of the largest known classes of
  complexes which produce ideals with linear resolution over all fields) are
  essentially, but not exactly,  the same as $d$-collapsible ones
  (\cref{d-collapsible=chordal});

\item introduce a large class of complexes, which we call
  \emph{chordal complexes}, whose Stanley-Reisner ideals are
  componentwise linear (\cref{main1});

\item show that, for a suitable $d$, $d$-collapsible and
  $d$-representable simplicial complexes  are chordal
  and have componentwise linear Stanley-Reisner ideals (\cref{main1});

\item show that square-free stable monomial ideals have chordal
  Stanley-Reisner complexes (\cref{SS});

\item show that Alexander duals of vertex decomposable complexes are
  chordal (\cref{VD->Chordal});

\item show that Gotzmann square-free monomial ideals have chordal
  Stanley-Reisner complexes (\cref{Gotz});

\item show that Betti tables of Stanley-Reisner ideals of chordal
  complexes encompass all Betti tables of componentwise linear ideals
  (\cref{BT-CWL});

\item show that  there are specific monomials we can
  add to the generators of a monomial ideal without affecting the Betti
  numbers  in most degrees (\cref{betti numbers of chordals});

\item using induced subcomplexes, find useful inductive properties of
  componentwise linear ideals (\cref{main1}).

\end{itemize}

The authors are grateful for helpful comments from Eran Nevo and
Mayada Shahada , and for the hospitality of The Simons Institute for
the Theory of Computing in California, where they started this work in 2016.


\section{Basic definitions}

A {\bf simplicial complex} $\Gamma$ on the vertex set $[n]=\{
1,\ldots,n \}$, is a set of subsets of $[n]$ such that if $F\in
\Gamma$ and $F'\subseteq F$, then $F'\in \Gamma$.  Each element of
$\Gamma$ is called a {\bf face} of $\Gamma$. A {\bf facet} is a
maximal face of $\Gamma$ (with respect to inclusion). The dimension of
a face $F$ is $\dim F=|F|-1$. We define $\dim \emptyset=-1$. A face
$F$ of $\Gamma$ with $\dim F=t$ is called a $t$-face of $\Gamma$. Let
$d = \max\{\dim F : F \in \Gamma\}$ and define the dimension of
$\Gamma$ to be $\dim \Gamma=d$. We say that a simplicial complex is
{\bf pure} if all its facets have the same dimension.

A simplicial complex $\Gamma$ is uniquely determined by its facets. We
denote the set of the facets of $\Gamma$ by $\facets(\Gamma)$ and when
$\facets(\Gamma)=\{F_1, \ldots, F_m\}$, we write $\Gamma=\langle
F_{1}, \ldots, F_{m} \rangle$.  A simplicial complex with only one
facet is called a {\bf simplex}.

A {\bf subcomplex} $\Sigma$ of $\Gamma$ is a simplicial complex with $\Sigma\subset \Gamma$. Let $E\subset [n]$. By $\Gamma\sm E$ we mean
$$\Gamma\sm E= \{F\in \Gamma \st E\not\subseteq F \}$$ 
which is a subcomplex of $\Gamma$. 
If $W\subset [n]$, we denote by $\Gamma_W$ the {\bf induced
  subcomplex} of $\Gamma$ on the set $W$, in other
words $$\Gamma_W=\{F\in \Gamma \st F\subset W\}.$$

 The {\bf Alexander dual} $\Gamma^\vee$ of $\Gamma$ is
 the simplicial complex $$\Gamma^\vee=\{ F \subseteq [n] \st
 [n]- F \notin \Gamma \}.$$

 If $F$ is a face of $\Gamma$, then $\link_\Gamma(F)$ is the
 simplicial complex on $[n]- F$ defined
 as $$\link_\Gamma(F)= \{G \in \Gamma \st F \cap G =\emptyset 
  \mbox{ and } G\cup F \in \Gamma\}.$$

For a  nonnegative integer $i \leq \dim \Gamma$, we define the {\bf pure
  $i$-skeleton} $\Gamma^{[i]}$ of $\Gamma$ to be the simplicial complex
 $$\Gamma^{[i]}=\langle F\in \Gamma \st \dim F =i \rangle.$$

 A {\bf nonface} of $\Gamma$ is a subset $F$ of $[n]$ with $F\notin
 \Gamma$.

\begin{Definition}[\bf Stanley-Reisner ideal/complex]\label{d:SR} 
  Let $S=K[x_1,\ldots,x_n]$ be the polynomial ring over the field $K$
  with $n$ indeterminates.

\begin{itemize}

\item Let $\Gamma$ be a simplicial complex on $n$ vertices.  The 
{\bf Stanley-Reisner ideal} of $\Gamma$ is the monomial ideal
  $\N(\Gamma)$ of $S$ which is generated by the square-free monomials
  $\xb_F:=\prod_{i\in F} x_i$ with $F\notin \Gamma$. In other
  words $$\N(\Gamma)=(\xb_F \st F\notin \Gamma).$$ The {\bf
    Stanley-Reisner ring}, $K[\Gamma]$, is defined to be the quotient
  ring $S/\N(\Gamma)$.

\item Let $I$ be a square-free monomial ideal in $S$. We define its {\bf
    Stanley-Reisner complex} $\N(I)$ to be the simplicial
  complex $$\N(I)=\{F\subseteq [n] \st \xb_F \notin I\}.$$
\end{itemize}
It follows directly from the definitions that the Stanley-Reisner
correspondence is a one-to-one correspondence between simplicial
complexes on the vertex set $[n]$ and square-free monomial ideals in $S$.
\end{Definition}

Let $I\in S=K[x_1,\ldots, x_n]$ be a graded ideal and let
$$\F: 0\longrightarrow F_p\longrightarrow\ldots\longrightarrow F_1\longrightarrow F_0\longrightarrow I\longrightarrow 0$$
 be its graded minimal free resolution with  $F_i =\bigoplus_{j} {S}^{\beta^s_{i,j}(I)}(-j)$, for all $i$. For any pair of integers $(i,j)$, the {\bf $(i,j)$-th graded Betti number} of $I$ in $S$ is defined to be
	$$\beta^S_{i,j}(I) = \dim_K \mathrm{Tor}_{i}^{S}(K,I)_j$$
	for all $i$ and $j$. Throughout, we write $\beta_{i,j}(I)$  for $\beta^S_{i,j}(I)$. 	The ideal $I$ is called to have {\bf $d$-linear resolution} if $\beta_{i,j}(I)=0$ for all $i$ and all $j$ with $j\neq i+d$.


\section{d-chordality}
The definition below is a slight variation of that given
in~\cite[Definition~5.4]{CF1}.
\begin{Definition}[\bf $d$-closure]\label{d:d-closure}
  Let $\Gamma$ be a simplicial complex on the vertex set $[n]$ and $d$
  a positive integer. The $d$-closure of $\Gamma$, denoted by
  $\Delta_d(\Gamma)$, is the simplicial complex on $[n]$ whose faces
  are given in the following way:
	\begin{enumerate}
        \item[$\bullet$] the $d$-faces of $\Delta_d(\Gamma)$ are
          exactly the $d$-faces of $\Gamma$;
        \item[$\bullet$] all subsets of $[n]$ with at most $d$
          elements are faces of $\Delta_d(\Gamma)$;
	\item[$\bullet$] a subset of $[n]$ with more than $d + 1$
          elements is a face of $\Delta_d(\Gamma)$ if and only if all
          of its subsets of $d + 1$ elements are faces of $\Gamma$.
			\end{enumerate}

If $\Gamma$ is the $d$-closure of a simplicial complex, 
we simply say that  $\Gamma$ is a {\bf $d$-closure}.
\end{Definition}

To justify this terminology, note that all the simplicial complexes on
$[n]$ which have the same pure $d$-skeleton, have the same
$d$-closure. In particular, if $\Gamma= \Delta_d(\Sigma)$, by definition we have 
$\Gamma^{[d]}=\Sigma^{[d]}$ and  it follows that 
\begin{center} $\Gamma=\Delta_d(\Sigma)$
$\iff$ $\Delta_d(\Sigma)=\Delta_d(\Gamma)$
$\iff$ $\Gamma=\Delta_d(\Gamma).$
\end{center}

\begin{Example}\label{Example0}
  Let $\Gamma=\langle \{2,5\},\{1,4,5\},\{1,2,3,4\} \rangle$ be a
  simplicial complex on $[5]$ in \cref{Gamma chordal}.  
  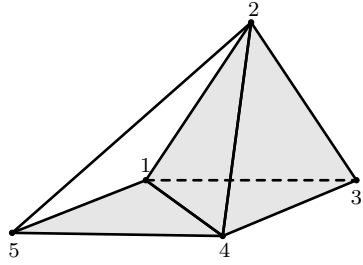
\begin{figure}[ht!]
\begin{center}
\begin{tikzpicture}[line cap=round,line join=round,>=triangle 45,x=0.7cm,y=0.7cm]
\clip(9.,4.5) rectangle (16.5,9.4);
\fill[line width=1.2pt,fill=black,fill opacity=0.10000000149011612] (14.,9.) -- (12.,6.) -- (13.46,4.94) -- cycle;
\fill[line width=2.pt,fill=black,fill opacity=0.10000000149011612] (14.,9.) -- (16.,6.) -- (13.46,4.94) -- cycle;
\fill[line width=2.pt,fill=black,fill opacity=0.10000000149011612] (12.,6.) -- (9.46,5.) -- (13.46,4.94) -- cycle;
\draw [line width=1.pt] (14.,9.)-- (12.,6.);
\draw [line width=1.pt] (12.,6.)-- (13.46,4.94);
\draw [line width=1.pt] (13.46,4.94)-- (14.,9.);
\draw [line width=1.pt] (14.,9.)-- (16.,6.);
\draw [line width=1.pt] (16.,6.)-- (13.46,4.94);
\draw [line width=1.pt] (13.46,4.94)-- (14.,9.);
\draw [line width=1.pt,dash pattern=on 3pt off 3pt] (12.,6.)-- (16.,6.);
\draw [line width=1.pt] (12.,6.)-- (9.46,5.);
\draw [line width=1.pt] (9.46,5.)-- (13.46,4.94);
\draw [line width=1.pt] (13.46,4.94)-- (12.,6.);
\draw [line width=1.pt] (14.,9.)-- (9.46,5.);
\draw (9.2,5) node[anchor=north west] {\begin{scriptsize}$5$\end{scriptsize}};
\draw (11.7,6.6) node[anchor=north west] {\begin{scriptsize}$1$\end{scriptsize}};
\draw (13.2,5) node[anchor=north west] {\begin{scriptsize}$4$\end{scriptsize}};
\draw (13.75,9.57) node[anchor=north west] {\begin{scriptsize}$2$\end{scriptsize}};
\draw (15.7,6) node[anchor=north west] {\begin{scriptsize}$3$\end{scriptsize}};
\begin{scriptsize}
\draw [fill=black] (14.,9.) circle (1.0pt);
\draw [fill=black] (12.,6.) circle (1.0pt);
\draw [fill=black] (13.46,4.94) circle (1.0pt);
\draw [fill=black] (16.,6.) circle (1.0pt);
\draw [fill=black] (9.46,5.) circle (1.0pt);
\end{scriptsize}
\end{tikzpicture}
\caption{The simplicial complex $\Gamma$}\label{Gamma chordal}
\end{center}
\end{figure}

Note
  that $\dim(\Gamma)=3$.
 We have 
\begin{align*}
\Delta_1(\Gamma)&=\langle \{1,2,4,5\},\{1,2,3,4\}\rangle,\\
\Delta_2(\Gamma)&=\langle \{2,5\}, \{3,5\},\{1,4,5\},\{1,2,3,4\}\rangle,\\
\Delta_3(\Gamma)&=\langle \{1,2,5\}, \{1,3,5\},\{1,4,5\},\{2,3,5\},\{2,4,5\},\{3,4,5\},\{1,2,3,4\}\rangle,\\
\Delta_i(\Gamma)&=\langle [5]\rangle^{[i-1]}, \quad\text{ for }i\geq 4.
\end{align*}
\end{Example}

It is shown in \cite[Proposition~5.6]{CF1} that a square-free monomial
ideal $I$ is equigenerated in degree $d+1$ if and only if $\N(I)$ is a
$d$-closure, i.e.
$$\N(I)=\Delta_{d}(\N(I)).$$

\begin{Definition}[{\bf free face and simplicial face} {\cite[Definition~2.13]{MNYZ}}]\label{simp}
  A  face $E$ of a simplicial complex $\Gamma$
  is called a {\bf free face} if it appears in a unique facet of
  $\Gamma$.   Note that facets are automatically free faces. 
  
  If $\Gamma$ is a $d$-closure and
  $\dim E=d-1$, then this free face is called {\bf simplicial}.  We
  denote the set of all simplicial faces of $\Gamma$ by
  $\mathrm{Simp}(\Gamma)$.
\end{Definition}

Let $\Gamma$ be a simplicial complex on $[n]$ and $E \subset [n]$. The
{\bf deletion} of $E$ from $\Gamma$, is the simplicial complex 
$$
\Gamma\sr{E}=\{F\in \Gamma: \ E\subsetneq F\}=\begin{cases}\Gamma &\text{ if } E\notin \Gamma\\
(\Gamma \sm E) \cup \{E\} &\text{ if } E\in \Gamma.
\end{cases}
$$
Note that if $E\in \Gamma$, the face $E$ is not deleted in this
operation.  In case $E$ is a simplicial face of a $d$-closure
$\Gamma$, this operation is called {\bf simplicial deletion} of $E$
from $\Gamma$. The simplicial complex obtained from a simplicial
deletion is again a $d$-closure.  Note also that all $(d-1)$-faces of
a $d$-closure $\Gamma$ which are its facets are simplicial. Indeed, for a $d$-closure $\Gamma$,
$\Gamma\sr{E}=\Gamma$ if and only if $E$ is a facet of $\Gamma$.

Let $\mathbf{E}=E_1,\ldots, E_t$ be a sequence of $(d-1)$-faces of a
$d$-closure $\Gamma$. The sequence $\mathbf{E}$ is called a {\bf
  simplicial sequence} of $\Gamma$ if $E_1\in \mathrm{Simp}(\Gamma)$,
and $E_i\in \mathrm{Simp}(\Gamma\sr{E_1}\ldots\sr{E_{i-1}})$ for all $i\geq 2$.  The sequence $\mathbf{E}$ is called a
{\bf simplicial order} of $\Gamma$ if $E_1$ is not a facet of
$\Gamma$, $E_i$ is not a facet in $\Gamma\sr{E_1}\ldots\sr{E_{i-1}}$ and
$$\Gamma\sr{E_1}\ldots\sr{E_{t}}=\langle [n]\rangle ^{[d-1]}.$$
In order to shorten the notation, we often use $\Gamma\sr{E_1,\ldots,E_{t}}$ instead of $\Gamma\sr{E_1}\ldots\sr{E_{t}}$.

\begin{Example}\label{Example1}
  Consider $\Delta_2(\Gamma)$ in~\cref{Example0} and let
  $E_1=\{1,5\}$. Since $E_1$ is uniquely contained in the facet
  $\{1,4,5\}$, it is a simplicial face of $\Delta_2(\Gamma)$. We have
\[
\Sigma_1:=\Delta_2(\Gamma)\sr{E_1}=\langle \{1,5\},\{2,5\},\{3,5\},\{4,5\},\{1,2,3,4\} \rangle.
\]
Now let 
$E_2=\{1,2\}$. Since the only facet in $\Sigma_1$ containing $E_2$ is $\{1,2,3,4\}$, $E_2$ is simplicial in $\Sigma_1$. Then
\[
\Sigma_2:=\Sigma_1\sr{E_2}=\langle \{1,2\},\{1,5\},\{2,5\},\{3,5\},\{4,5\},\{1,3,4\},\{2,3,4\} \rangle.
\]
Now $E_3=\{1,3\}$ is simplicial in $\Sigma_2$ and 
\[
\Sigma_3:=\Sigma_2\sr{E_3}=\langle \{1,2\},\{1,3\},\{1,4\},\{1,5\},\{2,5\},\{3,5\},\{4,5\},\{2,3,4\} \rangle.
\]
Finally $E_4=\{2,3\}$ is simplicial in $\Sigma_3$ and
\begin{align*}
\Sigma_3\sr{E_4}&=\langle \{1,2\},\{1,3\},\{1,4\},\{1,5\},\{2,3\},\{2,4\},\{2,5\},\{3,4\},\{3,5\},\{4,5\}\rangle\\
&=\langle [5]\rangle^{[1]}.
\end{align*}
Therefore $E_1,\ldots,E_4$ is a simplicial order of $\Delta_2(\Gamma)$. 
\end{Example}

\begin{Lemma}\label{l:easy} Let $d$ be a positive integer, $\Gamma$ 
  a simplicial complex and $E$ a $(d-1)$-dimensional face of
  $\Delta_d(\Gamma)$. Then $\Delta_d(\Gamma)\sr{E}=\Delta_d(\Gamma\sr{E})$.
\end{Lemma}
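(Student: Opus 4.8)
The plan is to show that $\Delta_d(\Gamma)\sr{E}$ and $\Delta_d(\Gamma\sr{E})$ have exactly the same faces, by comparing, for an arbitrary subset $F\subseteq[n]$, whether $F$ lies in each complex. Since $\dim E=d-1$ we have $|E|=d$, and I will use repeatedly the elementary fact that for any $F$ with $|F|\geq d+1$ one has $E\subsetneq F\iff E\subseteq F$, together with the observation that $E\subseteq F$ holds if and only if some $(d+1)$-element subset $G$ of $F$ satisfies $E\subseteq G$ (the forward direction extends $E$ inside $F$, which is possible since $|F|>|E|$; the reverse is immediate from $E\subseteq G\subseteq F$). Because every $d$-closure is determined by its $d$-faces, the heart of the argument is to track how the $d$-faces and the membership condition for larger faces transform under $\sr{E}$ and under $\Delta_d(-)$.

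First I would record the $d$-faces of $\Gamma\sr{E}$. A $d$-face $G$ of $\Gamma$ has $|G|=d+1>|E|$, so $G\in\Gamma\sr{E}$ iff $E\not\subsetneq G$ iff $E\not\subseteq G$. Hence the $d$-faces of $\Gamma\sr{E}$, and therefore of $\Delta_d(\Gamma\sr{E})$, are precisely the $d$-faces of $\Gamma$ that do not contain $E$. By definition the $d$-faces of $\Delta_d(\Gamma)$ are the $d$-faces of $\Gamma$.

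Next I would run a case analysis on $|F|$ for $F\subseteq[n]$. If $|F|\leq d$, then $F$ is a face of both $\Delta_d(\Gamma)$ and $\Delta_d(\Gamma\sr{E})$ (all subsets of size at most $d$ are faces of any $d$-closure), and $F\in\Delta_d(\Gamma)\sr{E}$ as well, since $E\subsetneq F$ is impossible when $|F|\leq d=|E|$; so both complexes contain $F$. If $|F|=d+1$, then $F\in\Delta_d(\Gamma)\sr{E}$ iff $F$ is a $d$-face of $\Gamma$ with $E\not\subseteq F$, while $F\in\Delta_d(\Gamma\sr{E})$ iff $F$ is a $d$-face of $\Gamma\sr{E}$, i.e.\ a $d$-face of $\Gamma$ with $E\not\subseteq F$ by the previous paragraph; the two conditions coincide.

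The decisive case is $|F|>d+1$, which is where I expect the only real work. Here $F\in\Delta_d(\Gamma)\sr{E}$ means every $(d+1)$-subset of $F$ is a $d$-face of $\Gamma$ and $E\not\subsetneq F$, whereas $F\in\Delta_d(\Gamma\sr{E})$ means every $(d+1)$-subset of $F$ is a $d$-face of $\Gamma$ not containing $E$. These are equivalent by the two observations above: since $|F|>d$ we have $E\not\subsetneq F\iff E\not\subseteq F$, and $E\not\subseteq F$ is equivalent to $E\not\subseteq G$ for all $(d+1)$-subsets $G$ of $F$. Assembling the three cases shows the two complexes have the same faces, hence $\Delta_d(\Gamma)\sr{E}=\Delta_d(\Gamma\sr{E})$. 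The one point requiring care throughout is keeping the size bound $|E|=d$ in play, as it is exactly what lets the proper-containment condition defining $\sr{E}$ be read off from the $(d+1)$-subsets that govern the $d$-closure.
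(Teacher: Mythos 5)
Your proof is correct and follows essentially the same route as the paper's: a case split on $|F|$ relative to $d+1$, with the key step being that $E\subseteq F$ if and only if $E\subseteq G$ for some $(d+1)$-subset $G$ of $F$, which lets the deletion condition pass through the $d$-closure. Your explicit tracking of $\subsetneq$ versus $\subseteq$ via $|E|=d$ is slightly more careful than the paper's write-up but adds nothing substantively different.
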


\begin{proof} It is clear that the two complexes have the same faces
  of dimension $\leq d-1$. By~\cref{d:d-closure}, if $\dim(F)=d$
  $$F \in \Delta_d(\Gamma)\sr{E} 
  \iff F \in \Gamma \mbox{ and }F \not\supset E 
  \iff F \in\Gamma\sr{E} 
  \iff F \in \Delta_d(\Gamma\sr{E})$$
   and if   $\dim(F)>d$
 \begin{align*}
 F \in \Delta_d(\Gamma)\sr{E} 
  &\iff \forall \ G \subset F \mbox{ if } \dim(G)=d \mbox{ then } G \in \Gamma \mbox{ and }F \not\supset E\\ 
 & \iff \forall \ G \subset F \mbox{ if } \dim(G)=d \mbox{ then } G \in \Gamma \mbox{ and }G \not\supset E\\ 
  &\iff \forall \ G \subset F \mbox{ if } \dim(G)=d \mbox{ then } G  \in \Gamma\sr{E} \\
  &\iff F \in \Delta_d(\Gamma\sr{E}).
\end{align*}
    \end{proof}

    \Cref{l:easy} allows us to define a chordal simplicial
    complex with two equivalent conditions. Below we define chordal
    simplicial complexes using the concept of chordal clutters as
    defined by the first author and the coauthors in~\cite{BYZ}.

\begin{Definition}[{\bf$d$-chordal and chordal simplicial complex}, see\cite{BYZ}]\label{d:chordal}
  Let $\Gamma$ be a simplicial complex on the vertex set $[n]$ and $d$
  a positive integer. We say that $\Gamma$ is {\bf $d$-chordal} if it
  satisfies one of the following equivalent conditions:

	\begin{itemize}
        \item[$(*)$] either $\Delta_{d}(\Gamma)=\langle
          [n]\rangle ^{[d-1]}$, or else $\Delta_d(\Gamma)$
          admits a simplicial order.

        \item[$(*')$] either $\Delta_{d}(\Gamma)=\langle
          [n]\rangle ^{[d-1]}$, or else there is $E\in
          \mathrm{Simp}(\Delta_{d}(\Gamma))$ such that $E$ is
          not a facet of $\Delta_d(\Gamma)$ and $\Gamma\sr{E}$ satisfies
          condition $(*')$.
       \end{itemize}

       We say that $\Gamma$ is {\bf chordal} if it is $d$-chordal for
       every $d \geq 1$.
\end{Definition}

Later in \cref{the bounds for checking chordality} we will show that
to prove $\Gamma$ is chordal, it is sufficient to check it is
$d$-chordal for a finite number of values of $d$.

 \Cref{d:chordal} of a ``$d$-chordal simplicial complex''
  is a Stanley-Reisner equivalent of ``chordal ($d+1$)-uniform clutters''
  in~ \cite{BYZ}.  The following statement follows directly from the
  definitions,  we include it for the sake of comparison.

\begin{Proposition} 
Let $\Gamma$ be a simplicial complex on $[n]$, and let $\mathcal{C}$
be the $(d+1)$-uniform clutter $$\mathcal{C}=\{ A \subseteq [n] \st |A|=d+1,
A \in \Gamma \}=\facets(\Gamma^{[d]}).$$ Then $\mathcal{C}$ is chordal in the sense of
\cite{BYZ} if and only if $\Gamma$ is $d$-chordal.
\end{Proposition}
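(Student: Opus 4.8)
The plan is to show this Proposition is essentially a matter of unwinding definitions, since both sides are built from the same combinatorial data: the set of $(d+1)$-element faces of $\Gamma$, equivalently the edge set of the clutter $\mathcal{C}=\facets(\Gamma^{[d]})$. The crucial observation is that $d$-chordality of $\Gamma$ (\cref{d:chordal}) depends on $\Gamma$ \emph{only through} $\Delta_d(\Gamma)$, and as noted right after \cref{d:d-closure}, $\Delta_d(\Gamma)$ is determined entirely by the pure $d$-skeleton $\Gamma^{[d]}$, i.e.\ by the $d$-faces of $\Gamma$, which are exactly the members of $\mathcal{C}$. So both the clutter-chordality condition from~\cite{BYZ} and the condition $(*)$ are functions of the same object $\mathcal{C}$, and the task reduces to matching up the recursive steps.

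First I would recall the definition of chordal clutter from~\cite{BYZ} explicitly, and set up the dictionary between the two languages. On the clutter side, the recursion in~\cite{BYZ} proceeds by locating a vertex (or, in the $(d+1)$-uniform generalization, a $(d-1)$-subset) that is ``simplicial'' in the sense that its neighborhood forms a clique/simplex, deleting it, and requiring the resulting clutter to again be chordal, with the base case being the complete $(d+1)$-uniform clutter (or the empty one). On the simplicial side, condition $(*')$ picks a simplicial face $E\in\mathrm{Simp}(\Delta_d(\Gamma))$ that is not a facet, performs the simplicial deletion $\Gamma\sr{E}$, and recurses, with base case $\Delta_d(\Gamma)=\langle[n]\rangle^{[d-1]}$. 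The core of the proof is the claim that these two notions of ``simplicial $(d-1)$-set'' coincide, and that deletion on one side corresponds to deletion on the other. A $(d-1)$-face $E$ being simplicial in $\Delta_d(\Gamma)$ means it lies in a unique facet; I would translate this to the statement that, among the $d$-faces of $\Gamma$ containing $E$, there is exactly one, which is precisely the clutter-theoretic condition that the link of $E$ in $\mathcal{C}$ is a single vertex. The compatibility of deletions is already handled for us: \cref{l:easy} gives $\Delta_d(\Gamma)\sr{E}=\Delta_d(\Gamma\sr{E})$, so the recursion on $\Gamma$ stays inside the world of $d$-closures and exactly mirrors the clutter deletion on $\mathcal{C}$.

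Given this dictionary, the argument is an induction on the number of $d$-faces of $\Gamma$ (equivalently on $|\mathcal{C}|$), or on the length of a simplicial order. In the forward direction, a chordal ordering of $\mathcal{C}$ in the sense of~\cite{BYZ} yields, step by step via the translation above, a simplicial face that is not a facet, and by \cref{l:easy} the deleted clutter again corresponds to a $d$-closure satisfying the inductive hypothesis; running this to the base case produces a simplicial order of $\Delta_d(\Gamma)$, so $(*)$ holds and $\Gamma$ is $d$-chordal. The converse reverses the same correspondence: a simplicial order of $\Delta_d(\Gamma)$ reads off, face by face, as a chordal elimination sequence for $\mathcal{C}$. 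I would be careful to check that the two base cases genuinely match --- $\Delta_d(\Gamma)=\langle[n]\rangle^{[d-1]}$ corresponds to $\mathcal{C}$ becoming the complete (or empty) $(d+1)$-uniform clutter --- and that the ``not a facet'' stipulation in $(*')$ aligns with whatever non-degeneracy condition~\cite{BYZ} imposes on the eliminated simplicial vertex.

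The main obstacle, and the only place real care is needed rather than pure bookkeeping, is pinning down the precise definition of chordal $(d+1)$-uniform clutter from~\cite{BYZ} and verifying that its notion of a ``simplicial vertex'' (a $(d-1)$-set whose neighbors span a clique, whose removal leaves a chordal clutter) matches the simplicial-face condition of \cref{simp} under the skeleton correspondence, including the subtle point that deleting a simplicial vertex from $\mathcal{C}$ must correspond to the simplicial deletion $\sr{E}$ rather than to the set-theoretic deletion $\sm E$. Since \cref{l:easy} already establishes that simplicial deletion commutes with $d$-closure, the remaining work is to confirm that the effect of $\sr{E}$ on the $d$-faces of $\Gamma$ is exactly the clutter operation of~\cite{BYZ}; once that single identification is made, the equivalence of the two recursive definitions follows immediately by induction, which is why the Proposition can be asserted to ``follow directly from the definitions.''
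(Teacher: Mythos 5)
Your proposal is correct and matches the paper's approach: the paper offers no written argument beyond the remark that the statement ``follows directly from the definitions,'' since \cref{d:chordal} was constructed precisely as the simplicial-complex translation of the chordal-clutter definition of~\cite{BYZ}, and your outline is a faithful expansion of exactly that definitional unwinding (matching simplicial faces of $\Delta_d(\Gamma)$ with simplicial submaximal circuits of $\mathcal{C}$, deletions via \cref{l:easy}, and the base cases). You also correctly flag the only point requiring care --- verifying the two notions of ``simplicial'' and of deletion coincide --- which is the content the paper leaves implicit.
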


In particular, as in the case of~\cite{BYZ}, our definition of chordality
for simplicial complexes extends that of graphs. Given a simplicial
complex $\Gamma$, its $1$-closure $\Delta_{1}(\Gamma)$ is the clique
complex of a graph $G=\Gamma^{[1]}$.  It is clear that $G$ is chordal
(i.e. has no minimal cycles of length greater than $3$) if and only if
its clique complex $\Delta_{1}(\Gamma)=\Delta_1(G)$ is $1$-chordal.


\section{$d$-collapsing}

In this section we show how the concept of {\bf elementary
  $d$-collapsing} introduced by Wegner~\cite{We} relates directly to
simplicial deletion. Elementary $d$-collapsing is a
  special case of the better known operation of \emph{simplicial
    collapsing} (see for example~\cite[Definition~6.13]{K}), which
  when applied to a simplicial complex produces a new simplicial
  complex which is homotopy equivalent to the original one. The main
  difference between the two operations is that in the case of
  $d$-collapsing a free face is allowed to be facet.

Recall that $\Gamma\sm{E}$  refers to the operation of deleting
all faces of the simplicial complex $\Gamma$ containing the face $E$
(including $E$ itself). In the case where $E$ is a free face we 
denote this complex by $\Gamma\coll{E}$, that is $$ \Gamma\coll{E}=\Gamma\sm{E}.$$

  A sequence of faces $\mathbf{E}=E_1,\ldots, E_t$ is called a {\bf
    free sequence} of $\Gamma$ if $E_1$ is a free face in $\Gamma$,
  and $E_i$ is a free face in $\Gamma\coll{E_1}\ldots\coll{E_{i-1}}$
  for all $i>1$.  We shorten the notation for the series of deletions,
  by using
  $$\Gamma\coll{E_1,\ldots,E_t}=\Gamma\coll{E_1}\cdots\coll{E_{t}}.$$

\begin{Definition}[\bf $d$-collapsing]\label{d-collapsing}
  If $\Gamma$ is a simplicial complex with a free face $E$, and $d$ is a
  positive integer with $\dim E<d$, then the operation
  $\Gamma\coll{E}$ is called an {\bf elementary $d$-collapsing.}  The
  simplicial complex $\Gamma$ is called {\bf $d$-collapsible} if it
  can be reduced to the void complex $\emptyset$
  after a finite number of elementary $d$-collapsings.
\end{Definition}

Suppose now $\Gamma$ is a $d$-closure and $E$ is a simplicial face of
$\Gamma$. Then, by definition, $E$ is a free face with $\dim E=d-1$ and 
\begin{align*}
\Gamma\sr{E}=\Gamma\coll{E}\cup \{E\}.
\end{align*}  
Suppose ${\bf
  E}=E_1,\ldots,E_t$ is a simplicial order of $\Gamma$. Then it is a free sequence of $\Gamma$ and 
\begin{align}\label{e:simp-order-collapse}
\Gamma\coll{E_1,\ldots,E_t}=\langle [n]\rangle
^{[d-1]}- \{E_1,\ldots, E_t\}.
\end{align}

We now start working our way towards \cref{d-collapsible=chordal},
where we show that there is a direct relation between the $d$-chordal
simplicial complexes and $d$-collapsible ones. 

A very useful tool when considering $d$-collapsings is
\cref{l:experimental} below, which we proved independently and then
found later in Tancer's work~\cite{T}. We refer the reader there for a
full proof.

\begin{Lemma}[Tancer~\cite{T}, Lemma~5.1]\label{l:experimental} Let 
  $\Sigma$ be a simplicial complex, $d$ a positive integer, $E
  \subsetneq E' $ free faces of $\Sigma$ of dimension $<d$. Then
  $\Sigma \coll{E'}$ $d$-collapses to $\Sigma \coll{E}$. In
  particular, if $\Sigma \coll{E}$ is $d$-collapsible, then so is
  $\Sigma \coll{E'}$.
\end{Lemma}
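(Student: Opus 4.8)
The plan is to reduce from $E'$ down to $E$ one vertex at a time, exploiting the fact that the star of a free face is a full simplex. First I would record the local structure of $\Sigma$ near $E$. Since $E$ is free it lies in a unique facet $M$, and because $E\subseteq E'$ every facet containing $E'$ also contains $E$, so $E'$ (which is likewise free) lies in the same facet $M$. Consequently the faces of $\Sigma$ containing $E$ are exactly the sets $G$ with $E\subseteq G\subseteq M$, an interval in the Boolean lattice isomorphic to $2^{M\sm E}$. In particular, any set $F$ with $E\subseteq F\subseteq E'$ lies only in the facet $M$, hence is itself a free face of $\Sigma$, of dimension $<d$ since $F\subseteq E'$ and $\dim E'<d$.

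The main computation is the following one-step identity. Suppose $F\subseteq F^{+}$ are free faces of $\Sigma$ with $F^{+}\sm F=\{v\}$ a single vertex and $\dim F<d$. In $\Sigma\coll{F^{+}}$ the faces still containing $F$ are precisely those $G$ with $F\subseteq G\subseteq M$ and $v\notin G$, whose unique maximal element is $M\sm\{v\}$; thus $F$ is a free face of $\Sigma\coll{F^{+}}$ of dimension $<d$, and the elementary $d$-collapse $\coll{F}$ is legitimate by \cref{d-collapsing}. Performing it removes from $\Sigma\coll{F^{+}}$ exactly the faces containing $F$, and since $F\subseteq F^{+}$ forces every face avoiding $F$ to avoid $F^{+}$ as well, one obtains $(\Sigma\coll{F^{+}})\coll{F}=\{G\in\Sigma \st F\not\subseteq G\}=\Sigma\coll{F}$.

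With this identity in hand I would finish by choosing a saturated chain $E=F_{0}\subsetneq F_{1}\subsetneq\cdots\subsetneq F_{k}=E'$ with $|F_{i+1}\sm F_{i}|=1$ and $k=|E'\sm E|$. Each $F_{i}$ is free in $\Sigma$ by the structural observation above, and each has dimension $<d$ since $\dim E'<d$; so applying the identity to the pair $F_{i}\subseteq F_{i+1}$ shows that $\Sigma\coll{F_{i+1}}$ $d$-collapses in a single step to $\Sigma\coll{F_{i}}$. Concatenating these $k$ elementary $d$-collapses exhibits a $d$-collapse of $\Sigma\coll{E'}$ to $\Sigma\coll{E}$, which is the first assertion; the ``in particular'' clause is then immediate, since a $d$-collapse of $\Sigma\coll{E}$ to the void complex prolongs the one just constructed. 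The one point requiring genuine care — and the reason the reduction cannot be carried out in a single step when $|E'\sm E|>1$ — is the freeness bookkeeping: $E$ itself is generally \emph{not} free in $\Sigma\coll{E'}$, as it then sits in the several facets $M\sm\{b\}$ for $b\in E'\sm E$, so one must peel off the vertices of $E'\sm E$ one at a time in order to keep a uniquely determined facet available at each stage.
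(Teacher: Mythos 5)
Your proof is correct. Note that the paper does not actually supply an argument for \cref{l:experimental} --- it states the lemma and defers to Tancer's paper for the proof --- so there is no internal proof to compare against; your write-up fills that gap with a valid, self-contained argument. The two pillars both check out: (i) since a free face $F$ with unique facet $M$ has star equal to the Boolean interval $[F,M]$, every $F$ with $E\subseteq F\subseteq E'$ is again free in $\Sigma$ with the same unique facet $M$ and has dimension $<d$; and (ii) after deleting the star of $F^{+}=F\cup\{v\}$, the faces containing $F$ that survive are exactly $[F,M\sm\{v\}]$, and $M\sm\{v\}$ is genuinely a facet of $\Sigma\coll{F^{+}}$ because any face of $\Sigma$ strictly containing it contains $F$ and hence lies in $[F,M]$, leaving only $M$, which has been removed. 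The degenerate case $F^{+}=M$, where $M\sm\{v\}=F$ is itself a facet, is harmless since \cref{d-collapsing} permits facets as free faces. The set-theoretic identity $(\Sigma\coll{F^{+}})\coll{F}=\Sigma\coll{F}$ and the concatenation along a saturated chain from $E$ to $E'$ then give the claimed $d$-collapse, and your closing observation --- that $E$ is not free in $\Sigma\coll{E'}$ when $|E'\sm E|>1$ because it then lies in the several facets $M\sm\{b\}$, $b\in E'\sm E$ --- correctly explains why the vertex-by-vertex peeling is needed rather than a single step.
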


\begin{Lemma}\label{changing the position of a simp}
Let $\Sigma$ be a simplicial complex on the vertex set $[n]$ and let $\mathbf{E}=E_1,\ldots, E_r$ be a free sequence of $\Sigma$  with the property that $\dim E_r=d-1$ and $E_r$ is the only element in this sequence such that the unique facet containing it has dimension $\geq d$. Then  $E_r$ is a free face of $\Sigma$ and $E_1,\ldots,E_{r-1}$ is a free sequence of $\Sigma\coll{E_r}$. Moreover, $$\Sigma\coll{E_r, E_1,\ldots,E_{r-1}}=\Sigma\coll{E_1,\ldots,E_{r}}.$$
\end{Lemma}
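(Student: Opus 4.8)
The plan is to reduce everything to one order-independent description of iterated collapsing and then verify that the reordered sequence is legitimate. The starting point is that, since $\Gamma\coll{E}=\Gamma\sm E=\{F\in\Gamma : E\not\subseteq F\}$, iterating gives
\[
\Sigma\coll{F_1,\ldots,F_k}=\{F\in\Sigma : F_j\not\subseteq F \text{ for all } j\}
\]
independently of the order of the $F_j$. Hence the asserted equation $\Sigma\coll{E_r,E_1,\ldots,E_{r-1}}=\Sigma\coll{E_1,\ldots,E_r}$ is automatic as an identity of face sets; the real content is that $E_r$ is a free face of $\Sigma$ and that $E_1,\ldots,E_{r-1}$ is still a free sequence once $E_r$ is collapsed first, so that the left-hand side is a genuine sequence of collapsings.

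Write $\Sigma_i=\Sigma\coll{E_1,\ldots,E_i}$ and let $G_i$ be the unique facet of $\Sigma_{i-1}$ containing $E_i$, so the hypothesis reads $\dim G_i\le d-1$ for $i<r$ and $\dim G_r\ge d$. The preliminary remark I would record is a dimension count: collapsing $E_i$ for $i<r$ deletes only faces of $\{F : E_i\subseteq F\subseteq G_i\}$, all of dimension $\le\dim G_i\le d-1$. Consequently $\Sigma_{i-1}$ and $\Sigma_{r-1}$ agree on every face of dimension $\ge d$, for each $i\le r-1$, and likewise $\Sigma$ and $\Sigma_{r-1}$ agree in dimension $\ge d$. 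In particular the large facet $G_r$, which lies in $\Sigma_{r-1}$ and has $\dim G_r\ge d$, is a face of $\Sigma$ and of every $\Sigma_{i-1}$.

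To see $E_r$ is free in $\Sigma$, I would show the faces of $\Sigma$ containing $E_r$ are exactly $\{F : E_r\subseteq F\subseteq G_r\}$: such a face of dimension $\ge d$ already lies in $\Sigma_{r-1}$ by the agreement, hence is contained in the unique facet $G_r$, while the only face of dimension $\le d-1$ containing the $(d-1)$-face $E_r$ is $E_r$ itself. Since nothing in $\Sigma$ can properly contain $G_r$ (a strictly larger face would have dimension $\ge d$ and hence live in $\Sigma_{r-1}$, contradicting maximality of $G_r$ there), $G_r$ is a facet of $\Sigma$ and is the unique facet containing $E_r$, so $E_r$ is free in $\Sigma$.

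For the free-sequence claim, order-independence gives $\Sigma\coll{E_r,E_1,\ldots,E_{i-1}}=\Sigma_{i-1}\sm E_r$, so it suffices to show $E_i$ is free in $\Sigma_{i-1}\sm E_r$ for each $i\le r-1$. The decisive point, which I expect to be the main obstacle, is that $E_r\not\subseteq G_i$: since $\dim E_r=d-1\ge\dim G_i$, an inclusion $E_r\subseteq G_i$ would force $E_r=G_i$, making $E_r$ a facet of $\Sigma_{i-1}$, yet $G_r\supsetneq E_r$ is a face of $\Sigma_{i-1}$ — a contradiction. Granting $E_r\not\subseteq G_i$, deleting $E_r$ removes no member of the star $\{F : E_i\subseteq F\subseteq G_i\}$, because each such $F$ lies in $G_i$, which does not contain $E_r$; thus $E_i$ still sits in the single facet $G_i$ of $\Sigma_{i-1}\sm E_r$ and remains free there. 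This makes $E_1,\ldots,E_{r-1}$ a free sequence of $\Sigma\coll{E_r}$, and together with the order-independent identity it delivers the displayed equality.
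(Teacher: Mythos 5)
Your proof is correct and follows essentially the same route as the paper's: both arguments rest on the observations that the first $r-1$ collapses only remove faces of dimension $\leq d-1$ (so faces of dimension $\geq d$, in particular $G_r$, persist), that $E_r\not\subseteq G_i$ for $i<r$ by the dimension comparison, and that iterated star-deletion is order-independent as a set operation. Your presentation is a bit more explicit about the order-independence and about characterizing the closed star of $E_r$ in $\Sigma$, but the substance matches the paper's proof.
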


\begin{proof}
  Suppose $F$ is the unique facet in
  $\Sigma\coll{E_1,\ldots,E_{r-1}}$ which contains
  $E_r$. 
   If $T\in\Sigma$ has dimension$\geq d$, then $E_i\not\subset T$ for
  $i<r$. Thus $T\in\Sigma\coll{E_1,\ldots,E_{r-1}}$. It follows that
  $F$ is a facet in $\Sigma$. Suppose $G$ is another facet in $\Sigma$
  containing $E_r$. Since $F$ is the unique facet in
  $\Sigma\coll{E_1,\ldots,E_{r-1}}$ containing $E_r$, we conclude that
  $G$ contains some $E_i$ with $i<r$. Hence $\dim G<d$ and so
  $G=E_r\subset F$, a contradiction. Thus $E_r$ is a free face of
  $\Sigma$ .

  Now we show that $E_1,\ldots,E_{r-1}$ is a free sequence of
  $\Sigma\coll{E_r}$. Suppose $F_1$ is the unique facet in $\Sigma$
  containing $E_1$ and for $1<i<r$, $F_i$ is the unique facet in
  $\Sigma\coll{E_1,\ldots,E_{i-1}}$ containing $E_i$. Then since $\dim
  F_i<d$, we have $E_r\subseteq F_i$ if and only if $E_r=F_i$. If
  $E_r\subset F_i$, then $F_i\subset F$ and since $F\in
  \Sigma\coll{E_1,\ldots,E_{i-1}}$, it contradicts the fact that $F_i$
  is a facet. Therefore $E_r\not\subseteq F_i$ for $1\leq i\leq r-1$. Hence $F_1$ is a facet in $\Sigma\coll{E_r}$ and
  $F_i$ is a facet in $\Sigma\coll{E_r}\coll{E_1,\ldots,E_{i-1}}$ for
  $1<i<r$. Moreover, since
  $\Sigma\coll{E_r}\coll{E_1,\ldots,E_{i-1}}\subseteq
  \Sigma\coll{E_1,\ldots,E_{i-1}}$ it follows that $F_i$ is the only
  facet in $\Sigma\coll{E_r, E_1,\ldots,E_{i-1}}$ containing
  $E_i$. Thus $E_1$ is a free face in $\Sigma\coll{E_r}$ and $E_i$ is
  a free face in $\Sigma\coll{E_r, E_1,\ldots,E_{i-1}}$.
\end{proof}

An immediate consequence of \cref{l:experimental} and
\cref{changing the position of a simp} is that $d$-collapsibility and $d$-chordality are
intimately connected.

\begin{Theorem}[\bf $d$-collapsible is equivalent to $d$-chordal for
  $d$-closures]\label{d-collapsible=chordal}
  Let $\Gamma$ be a $d$-closure on the vertex set $[n]$ for a positive
  integer $d$.  Then $\Gamma$ is $d$-chordal if and only if $\Gamma$
  is $d$-collapsible. In particular, a simplicial complex $\Sigma$ is
  chordal if and only if for all $d\geq 1$ the simplicial complex
  $\Delta_d(\Sigma)$ is $d$-collapsible.
\end{Theorem}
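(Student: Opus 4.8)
The plan is to prove the two implications separately. Since $\Gamma$ is a $d$-closure we have $\Delta_d(\Gamma)=\Gamma$, so by \cref{d:chordal} $d$-chordality of $\Gamma$ means either $\Gamma=\langle [n]\rangle^{[d-1]}$ or $\Gamma$ admits a simplicial order. The one elementary fact I will lean on throughout is that \emph{every} simplicial complex of dimension at most $d-1$ is $d$-collapsible: each of its facets is then a free face of dimension $<d$, and collapsing a facet merely deletes that facet, so one can strip faces off one at a time (ending by collapsing $\emptyset$) down to the void complex. For the easy direction ($d$-chordal $\Rightarrow$ $d$-collapsible), if $\Gamma=\langle [n]\rangle^{[d-1]}$ there is nothing to prove; otherwise a simplicial order $E_1,\ldots,E_t$ is a free sequence, and by \eqref{e:simp-order-collapse} the complex $\Gamma$ collapses to $\Gamma\coll{E_1,\ldots,E_t}=\langle [n]\rangle^{[d-1]}\setminus\{E_1,\ldots,E_t\}$, which has dimension $\leq d-1$ and hence is $d$-collapsible. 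Concatenating the two collapsing sequences shows $\Gamma$ is $d$-collapsible.

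For the converse ($d$-collapsible $\Rightarrow$ $d$-chordal) I will induct on the number of $d$-faces of $\Gamma$. If $\Gamma$ has no $d$-face, the $d$-closure property forces $\Gamma=\langle [n]\rangle^{[d-1]}$, the first alternative of chordality. Otherwise the aim is to produce a non-facet simplicial face $E$ (a free $(d-1)$-face lying inside a facet of dimension $\geq d$) for which $\Gamma\sr E$ is again $d$-collapsible. Since $\Gamma\sr E=\Gamma\coll E\cup\{E\}$ is a $d$-closure with strictly fewer $d$-faces (at least the $d$-faces through $E$ disappear), the induction hypothesis then makes $\Gamma\sr E$ $d$-chordal, whence $\Gamma$ is $d$-chordal by condition $(*')$ of \cref{d:chordal}. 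It is in fact enough to arrange that $\Gamma\coll E$ be $d$-collapsible: in $\Gamma\sr E$ the face $E$ is a free $(d-1)$-face with $(\Gamma\sr E)\coll E=\Gamma\coll E$, so collapsing $E$ first reduces $\Gamma\sr E$ to $\Gamma\coll E$.

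To build $E$, fix any collapsing sequence $F_1,\ldots,F_N$ of $\Gamma$ and call a collapse \emph{big} if the collapsed face lies in a facet of dimension $\geq d$ at that stage. A small collapse deletes only faces of dimension $<d$, so it cannot remove a $d$-face; as $\Gamma$ has a $d$-face there is a first big collapse $F_j$, with $F_1,\ldots,F_{j-1}$ all small. Let $F$ be the (dimension $\geq d$) unique facet of $F_j$ in $\Gamma\coll{F_1,\ldots,F_{j-1}}$ and pick a $(d-1)$-face $E$ with $F_j\subseteq E\subsetneq F$; since $F$ is a face of $\Gamma$, $E$ is not a facet of $\Gamma$, and $E$ is free in $\Gamma\coll{F_1,\ldots,F_{j-1}}$ with unique facet $F$. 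Applying \cref{l:experimental} inside $\Gamma\coll{F_1,\ldots,F_{j-1}}$ to the free faces $F_j\subseteq E$, the $d$-collapsibility of $\Gamma\coll{F_1,\ldots,F_j}$ carries over to $\Gamma\coll{F_1,\ldots,F_{j-1},E}$; thus $F_1,\ldots,F_{j-1},E$ is a free sequence of $\Gamma$ with $d$-collapsible end in which $E$ is the unique big, $(d-1)$-dimensional term. \Cref{changing the position of a simp} now moves $E$ to the front: it certifies that $E$ is a free face of $\Gamma$, hence a simplicial face by \cref{simp}, and that $\Gamma\coll E$ $d$-collapses (via $F_1,\ldots,F_{j-1}$) to the $d$-collapsible complex $\Gamma\coll{F_1,\ldots,F_{j-1},E}$, so $\Gamma\coll E$ is $d$-collapsible, exactly as needed. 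Finally, the ``in particular'' statement follows because $\Sigma$ is $d$-chordal precisely when the $d$-closure $\Delta_d(\Sigma)$ is (the defining condition depends only on $\Delta_d(\Sigma)$, which is idempotent under $\Delta_d$), so applying the main equivalence for each $d\geq 1$ yields the characterization of chordality.

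The step I expect to be the main obstacle is the construction of $E$ in the converse. The difficulty is that an arbitrary $d$-collapsing uses free faces of every dimension below $d$ and genuinely deletes them, whereas chordality requires a simplicial deletion of a single $(d-1)$-face that \emph{retains} that face while decreasing the count of $d$-faces. Bridging this gap---isolating the first big collapse, thickening it to dimension $d-1$ through \cref{l:experimental}, and relocating it to the front through \cref{changing the position of a simp}---is what makes the induction go through.
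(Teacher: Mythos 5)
Your proof is correct and follows essentially the same route as the paper: the forward direction collapses along the simplicial order to $\langle [n]\rangle^{[d-1]}\setminus\{E_1,\ldots,E_t\}$ and then strips the low-dimensional remainder, while the converse isolates the first ``big'' collapse, thickens it to a $(d-1)$-face via \cref{l:experimental}, relocates it to the front via \cref{changing the position of a simp}, and inducts. The only cosmetic difference is that you induct on the number of $d$-faces where the paper inducts on the total number of faces, and you spell out the thickening step that the paper compresses into ``we may assume $\dim E_r=d-1$.''
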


  \begin{proof} It is enough to prove the first statement. Suppose
  $\Gamma$ is $d$-collapsible. We prove by induction on the number of
  faces of $\Gamma$ that $\Gamma$ is $d$-chordal.  The base case of
  the induction is the smallest $d$-closure $\Gamma=\langle [n]
  \rangle^{[d-1]}$ which is $d$-chordal by definition.

  Suppose $\Gamma \supsetneq\langle [n] \rangle^{[d-1]}$ is
  $d$-collapsible. Hence there is a free sequence
  $E_1,\ldots,E_t$ such that $\dim E_i<d$ for all $i$ and
  $\Gamma\coll{E_1,\ldots,E_t}=\emptyset$. Suppose $r$
  is the smallest integer in $1,\ldots,t$ such that the  facet
  $F$ of $\Gamma\coll{E_1,\ldots,E_{r-1}}$ uniquely containing $E_r$ has
  dimension $\geq d$. Note that since $\Gamma \neq\langle [n] \rangle^{[d-1]}$, such $r$ exists. We may assume by \cref{l:experimental} that
  $\dim E_r=d-1$. By \cref{changing the position of a simp} we know that $E_r\in
  \Simp(\Gamma)$ and $E_1,\ldots,E_{r-1}$ is a free sequence of
  $\Gamma\coll{E_r}$. 

  Since
  $\Gamma\coll{E_r}\coll{E_1,\ldots,E_{r-1}}=\Gamma\coll{E_1,\ldots,E_{r}}$
  is $d$-collapsible, so is $\Gamma\coll{E_r}$. On the other hand
  $\Gamma\coll{E_r}=(\Gamma\sr{E_r})\coll{E_r}.$  
  So $\Gamma\sr{E_r}$ is $d$-collapsible, and hence $d$-chordal by the
  induction hypothesis, which implies that $\Gamma$ is
  $d$-chordal.

 Suppose now that $\Gamma$ is $d$-chordal and admits a simplicial order
  $E_1,\ldots,E_t$.  By \cref{e:simp-order-collapse} the
  simplicial complex $\Gamma$, $d$-collapses along this sequence to
  $\langle [n]\rangle ^{[d-1]}-\{E_1,\ldots, E_t\}$, and since
  all faces of this simplicial complex have dimension $<d$, it
  collapses into $\langle [n]\rangle^{[d-2]}$ by elementary
  $d$-collapsings along its facets. Continuing this process one sees
  that $\langle [n]\rangle ^{[d-1]}-\{E_1,\ldots, E_t\}$
  collapses into $\emptyset$ by a sequence of
  elementary $d$-collapsings. Hence $\Gamma$ is $d$-collapsible.
    \end{proof}

	 Given a simplicial complex $\Gamma$ on $[n]$, the set of all
         $d$-faces of $\Gamma$ forms a ``$(d+1)$-uniform clutter'' which we call
         $\C$. In~\cite{BYZ} the authors defined a chordal $(d+1)$-uniform clutter.
         It is straightforward to check that $\Gamma$ is $d$-chordal if
         and only if $\C$ is   a chordal $(d+1)$-uniform clutter.  It is
         also proved in~\cite{BYZ} that if $\Gamma=\langle
         [n]\rangle$, then $\C$ is chordal. It follows that $\Gamma=\langle
         [n]\rangle$  is $d$-chordal for any $d$. In the following lemma we give a
         direct short proof for this fact using $d$-collapsibility.

\begin{Lemma}{{\rm [See also \cite[Corollary~3.11]{BYZ}]}}\label{complete}
  The simplicial complex $\langle [n]\rangle$ is chordal. 
  \end{Lemma}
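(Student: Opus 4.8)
The plan is to reduce the statement to $d$-collapsibility via \cref{d-collapsible=chordal}. First I would record that $\langle [n]\rangle$ is its own $d$-closure: since $\langle [n]\rangle$ contains every subset of $[n]$, in particular all of its $(d+1)$-subsets are faces, so by \cref{d:d-closure} every subset of $[n]$ is a face of the $d$-closure, giving $\Delta_d(\langle [n]\rangle)=\langle [n]\rangle$ for each $d\geq 1$. By \cref{d-collapsible=chordal} (second statement, applied to $\Sigma=\langle[n]\rangle$), it then suffices to prove that $\langle [n]\rangle$ is $d$-collapsible for every $d\geq 1$.

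The key observation is that $\langle [n]\rangle$ has a single facet, namely $[n]$, so \emph{every} face of $\langle [n]\rangle$ is free. In particular each vertex $\{v\}$ is a free face of dimension $0$, and since $0<d$ for every $d\geq 1$, removing it is a valid elementary $d$-collapsing by \cref{d-collapsing}. Moreover $\langle [n]\rangle\coll{v}=\langle [n]\rangle\sm\{v\}=\{F\subseteq[n]\st v\notin F\}=\langle [n]\sm\{v\}\rangle$ is again a full simplex, now on $n-1$ vertices. I would therefore collapse the vertices one at a time: at every stage the current complex is a full simplex on the remaining vertices, hence has a unique facet and all of its vertices are free, so the sequence of deletions is a genuine free sequence. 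This reduces $\langle [n]\rangle$ through full simplices of decreasing size down to a single vertex $\{\emptyset,\{w\}\}$; collapsing $\{w\}$ leaves the complex $\{\emptyset\}$, and a final elementary $d$-collapsing of the empty face (which is free, of dimension $-1<d$) produces the void complex $\emptyset$. Because every free face used in this sequence has dimension $\leq 0$, the \emph{same} collapsing sequence is simultaneously valid for every $d\geq 1$; thus $\langle [n]\rangle$ is $d$-collapsible, hence $d$-chordal, for all $d$, i.e.\ chordal.

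There is no serious obstacle here; the statement is genuinely easy once routed through \cref{d-collapsible=chordal}. The only points requiring care are bookkeeping ones: verifying the identity $\Delta_d(\langle [n]\rangle)=\langle [n]\rangle$, confirming that freeness is preserved at each intermediate stage (guaranteed because each intermediate complex remains a full simplex with a unique facet), and remembering that \cref{d-collapsing} demands reduction all the way to the void complex $\emptyset$ rather than to $\{\emptyset\}$, which is what forces the concluding collapse of the empty face. An alternative would be to exhibit a simplicial order of $\Delta_d(\langle[n]\rangle)$ and verify condition $(*)$ of \cref{d:chordal} directly, but the $d$-collapsibility route is shorter and makes the uniformity in $d$ transparent.
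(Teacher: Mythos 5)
Your proof is correct and follows essentially the same route as the paper: both reduce chordality to $d$-collapsibility via \cref{d-collapsible=chordal} and then collapse $\langle [n]\rangle$ one free vertex at a time through smaller full simplices (the paper phrases this as induction on $n$, you as an explicit iteration). If anything, you are slightly more careful than the paper's write-up about the very last step, where one must still collapse the empty face of $\{\emptyset\}$ to reach the void complex.
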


    \begin{proof}
      Let $\Gamma=\langle [n]\rangle$ and observe that $\Gamma$ is a
      $d$-closure. By \cref{d-collapsible=chordal} it is enough
      to show that $\Gamma$ is $d$-collapsible for all $d\geq 1$.  The
      face $E=\{n\}$ appears in the unique facet $[n]$, and is
      therefore a free face of $\Gamma$ of dimension $0<d$ for any $d\geq 1$. We
      now use induction on $n$. If $n=1$, then $\Gamma\coll{E}=\langle
      \emptyset \rangle$ and we are done. If $n>1$, then
      $\Gamma\coll{E}=\langle [n-1]\rangle$ which is $d$-collapsible
      by induction hypothesis, settling our claim.
    \end{proof}

   The condition of being a $d$-closure is necessary in the statement
   of \cref{d-collapsible=chordal}, as can be seen in the
   example below.

   \begin{Example}\label{Example2}
    If $\Gamma=\langle
     \{1,2,3\},\{1,2,4\},\{1,3,4\},\{2,3,4\}\rangle$ is the hollow
     tetrahedron and $d=1$, then $\Delta_1(\Gamma)=\langle
     \{1,2,3,4\}\rangle$ is the full tetrahedron, and so $\Gamma$ is
     $1$-chordal by \cref{complete}. But $\Gamma$ has no free
     face of dimension $< 1$, and therefore $\Gamma$ is not
     $1$-collapsible.
   \end{Example}

   This example shows that $d$-collapsibility of $\Delta_d(\Gamma)$ is
   not a sufficient condition for $\Gamma$ to be $d$-collapsible. It
   is, however, a necessary condition, as we show in
   \cref{d-collapsible skeleton}, which implies, in particular, that
   every $d$-collapsible complex is $d$-chordal, though the converse
   is not true in general (\cref{d-coll=>t-chordal}(a)). To show  \cref{d-collapsible skeleton}, we need the following two lemmas.

\begin{Lemma}\label{special subcomplex is d-coll}
Let $\Sigma$ be a $d$-collapsible simplicial complex on $[n]$ and let $E$ be a subset of $[n]$
with the property that all facets of $\Sigma$
containing $E$ have dimension $\leq d-1$. Then $\Sigma\sm E$ is
$d$-collapsible.
\end{Lemma}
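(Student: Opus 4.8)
The plan is to induct on the number of faces of $\Sigma$. First I would record a reformulation of the hypothesis that behaves well under passing to subcomplexes: since every face containing $E$ is contained in some facet containing $E$, the assumption ``all facets of $\Sigma$ containing $E$ have dimension $\le d-1$'' is equivalent to ``every face $F\in\Sigma$ with $E\subseteq F$ satisfies $\dim F\le d-1$''. In this form the hypothesis is clearly inherited by every subcomplex of $\Sigma$, which is what makes the induction run. The base case is $E\notin\Sigma$ (in particular the void complex), where $\Sigma\sm E=\Sigma$ is $d$-collapsible by assumption.

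For the inductive step I assume $E\in\Sigma$, so $\Sigma\neq\emptyset$ and, being $d$-collapsible, $\Sigma$ admits a free face $F_1$ with $\dim F_1<d$ beginning a collapsing sequence that reduces $\Sigma$ to $\emptyset$. Then $\Sigma\coll{F_1}$ is $d$-collapsible (the tail of the sequence collapses it), has strictly fewer faces, and inherits the hypothesis, so by induction $(\Sigma\coll{F_1})\sm E=(\Sigma\sm E)\sm F_1$ is $d$-collapsible. The strategy is then to compare $\Sigma\sm E$ with $(\Sigma\sm E)\sm F_1$ and show that $\Sigma\sm E$ either equals this complex or $d$-collapses onto it; prepending those collapses to a collapsing sequence of $(\Sigma\sm E)\sm F_1$ exhibits $\Sigma\sm E$ as $d$-collapsible.

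I split into two cases. If $E\subseteq F_1$, then every face containing $F_1$ also contains $E$, so deleting the faces that contain $E$ already deletes every face containing $F_1$; a short set computation gives $\Sigma\sm E=(\Sigma\sm E)\sm F_1$ and the step is finished. If $E\not\subseteq F_1$, let $G$ be the unique facet of $\Sigma$ containing the free face $F_1$, so that every face of $\Sigma$ containing $F_1$ lies in the interval $[F_1,G]$. The faces of $\Sigma\sm E$ that contain $F_1$ are exactly those $H$ with $F_1\subseteq H\subseteq G$ and $E\not\subseteq H$, and removing all of them produces $(\Sigma\sm E)\sm F_1$. I would carry out this removal by elementary $d$-collapsings: when $E\not\subseteq G$ the face $F_1$ is still free in $\Sigma\sm E$, its unique facet there being $G$, so the single collapse $\coll{F_1}$ suffices; when $E\subseteq G$ the hypothesis forces $\dim G\le d-1$, hence every face in $[F_1,G]$ has dimension $<d$, and I would delete the relevant faces in order of decreasing dimension, each of them being a facet (hence a free face of dimension $<d$) at the moment it is removed.

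The main obstacle is precisely this last point: after deleting the faces containing $E$, the free face $F_1$ of $\Sigma$ need not remain free in $\Sigma\sm E$, because $G$ may be destroyed and replaced by several maximal faces $G\sm\{w\}$ with $w\in E\sm F_1$. The hypothesis is exactly what rescues the argument: whenever $G$ is removed by the deletion (the case $E\subseteq G$), the entire interval $[F_1,G]$ is low-dimensional, so all required deletions are legitimate elementary $d$-collapsings and no face of dimension $\ge d$ is ever collapsed. Verifying that collapsing in decreasing order of dimension only removes faces of $[F_1,G]$ — so that one lands exactly on $(\Sigma\sm E)\sm F_1$ — is the routine bookkeeping that completes the step, using that any face properly containing such an $H$ again contains $F_1$ and hence lies in $[F_1,G]$.
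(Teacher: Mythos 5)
Your proof is correct, but it takes a genuinely different route from the paper's. Both arguments induct on the number of faces of $\Sigma$ and are driven by a $d$-collapsing sequence, but the inductive steps differ. The paper locates the first term $E_r$ of the collapsing sequence that is contained in some face containing $E$, notes that the prefix $E_1,\ldots,E_{r-1}$ survives as a free sequence of $\Sigma\sm E$ (so one may assume $r=1$), and then makes its key move: since the maximal faces of $\{F\in\Sigma : E\subseteq F\}$ are facets of $\Sigma$ of dimension $\le d-1$, hence free faces, Tancer's lemma (\cref{l:experimental}) upgrades the $d$-collapsibility of $\Sigma\coll{E_1}$ to that of $\Sigma\coll{G}$ for such a facet $G\supseteq E_1$ with $E \subseteq G$, and $\Sigma\coll{G}\sm E=\Sigma\sm E$ with $\Sigma \coll{G}$ having one face fewer, which closes the induction. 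You avoid \cref{l:experimental} entirely: you peel off only the first free face $F_1$, apply the inductive hypothesis to $\Sigma\coll{F_1}$, and verify directly that $\Sigma\sm E$ equals or $d$-collapses onto $(\Sigma\sm E)\sm F_1=(\Sigma\coll{F_1})\sm E$; the only delicate case is $E\subseteq G$ for the unique facet $G\supseteq F_1$, where the hypothesis forces the whole interval $[F_1,G]$ to have dimension $\le d-1$, so its surviving faces can be stripped off as facets in decreasing order of dimension. The paper's proof is shorter because the hard part is outsourced to Tancer's lemma; yours is self-contained and pinpoints exactly where the dimension hypothesis enters, at the cost of more bookkeeping. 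Your opening reformulation of the hypothesis as ``every face containing $E$ has dimension $\le d-1$,'' which is manifestly inherited by subcomplexes, is a useful observation that the paper leaves implicit.
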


\begin{proof}
If $E=\emptyset$, then $\Sigma\sm E=\emptyset$ which is $d$-collapsible by definition.  Suppose $E\neq \emptyset$.
If $E\notin \Sigma$, then $\Sigma \sm E =\Sigma$ is
$d$-collapsible. Suppose $E \in \Sigma$ and 
$\mathbf{E}=E_1,\ldots,E_t$ is a free sequence of $\Sigma$ with $\dim
E_i\leq d-1$ and $\Sigma\coll{\mathbf{E}}=
\emptyset.$
   
   Suppose $\mathcal{E}=\{F\in \Sigma: E\subseteq F\}$. Then
   $\Sigma\sm E=\Sigma- \mathcal{E}$ and all maximal
   elements of $\mathcal{E}$ are facets of $\Sigma$ with dimension
   $\leq d-1$.

 Suppose $r$ is the smallest element in
   $1,\ldots,t$ with $E_r\subseteq G$ for some $G\in
   \mathcal{E}$. Then $E_1,\ldots,E_{r-1}$ is a free sequence in
   $\Sigma\sm E$, and we have
 \[
 \left(\Sigma\sm E\right)\coll{E_1,\ldots,E_{r-1}}=\left(\Sigma\coll{E_1,\ldots,E_{r-1}}\right)\sm E.
 \]

      So without loss of generality we may
      assume that $r=1$.

      We now proceed with induction on the number of faces of
      $\Sigma$.  If $\Sigma=\emptyset$, then there is nothing to
      prove.  Consider $\Sigma=\langle \{1\}\rangle$ as the base case
      of induction. Then $E=\{1\}$ and $\Sigma\sm E=\emptyset$
      which is $d$-collapsible. Let $G$ be a facet of $\Sigma$ in
      $\mathcal{E}$ containing $E_1$. Then $G$ is a free face of
      $\Sigma$ of dimension $\leq d-1$, and by \cref{l:experimental},
      since $\Sigma \coll{E_1}$ is $d$-collapsible, so is $\Sigma
      \coll{G}$. By induction hypothesis $$\Sigma \coll{G} \sm E
      = \Sigma \sm E$$ is $d$-collapsible, and we are done.
                \end{proof}

\begin{Lemma}\label{a simp induces a sequence}
   Let $E$ be a free face of a simplicial complex $\Gamma$ with
   $\dim E \leq d-1$. Then  there is a free sequence
   $\mathbf{E}=E_1,\ldots,E_r$ of dimension $\leq d-1$ for
   $\Sigma:=\Delta_d(\Gamma)$ such that
 \[
  \Sigma\coll{\mathbf{E}}=
                          \Sigma\sm E.
                         \]
                          \end{Lemma}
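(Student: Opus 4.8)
The plan is to describe the faces of $\Sigma=\Delta_d(\Gamma)$ containing $E$ through the link $\link_\Sigma(E)$, and then to collapse them away by \emph{lifting} a collapse of this link. Write $e=|E|$ and let $G$ be the unique facet of $\Gamma$ containing the free face $E$. First I would compute $\link_\Sigma(E)$. Since $E$ is free in $\Gamma$, every face of $\Gamma$ containing $E$ lies in $G$; combined with \cref{d:d-closure} this shows that a face $F\in\Sigma$ with $E\subseteq F$ and $\dim F\ge d$ must satisfy $F\subseteq G$, whereas every subset containing $E$ of size at most $d$ is automatically in $\Sigma$. Passing through $F\mapsto F\sm E$ then yields
$$\link_\Sigma(E)=\{S\subseteq [n]\sm E \st |S|\le d-e\}\cup \langle G\sm E\rangle,$$
the union of the full ``$(d-e)$-skeleton'' on $[n]\sm E$ with the simplex on $G\sm E$.

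The engine of the proof is a \emph{lifting principle}: for any complex $\Sigma'$ and any face $E\in\Sigma'$, the map $F\mapsto F\sm E$ is a poset isomorphism from the faces of $\Sigma'$ containing $E$ onto $\link_{\Sigma'}(E)$, and it sends facets to facets. Consequently $E\cup S$ is a free face of $\Sigma'$ precisely when $S$ is a free face of $\link_{\Sigma'}(E)$, and a direct check gives $\link_{\Sigma'\coll{E\cup S}}(E)=\link_{\Sigma'}(E)\coll{S}$. Iterating these two facts, any free sequence $S_1,\ldots,S_p$ of $\link_\Sigma(E)$ for which $\link_\Sigma(E)\coll{S_1,\ldots,S_p}=\emptyset$ lifts to a free sequence $E\cup S_1,\ldots,E\cup S_p$ of $\Sigma$ satisfying $\Sigma\coll{E\cup S_1,\ldots,E\cup S_p}=\Sigma\sm E$; moreover $\dim(E\cup S_i)=e+\dim S_i$.

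It thus suffices to collapse $\link_\Sigma(E)$ to the void complex using free faces of dimension $<d-e$; that is, to prove that $L(V,P,m):=\{S\subseteq V\st |S|\le m\}\cup\langle P\rangle$ is $m$-collapsible whenever $P\subseteq V$ and $m\ge 0$ (apply this with $V=[n]\sm E$, $P=G\sm E$, $m=d-e$). I would establish this by a double induction, on $m$ and then on $|V\sm P|$. If $m=0$ the complex is the simplex $\langle P\rangle$, collapsed by the single free face $\emptyset$; if $V=P$ it is the full simplex on $P$, collapsed by peeling vertices as in the proof of \cref{complete}. Otherwise pick $w\in V\sm P$: its link in $L(V,P,m)$ equals $L(V\sm w,\emptyset,m-1)$, which is $(m-1)$-collapsible by the induction on $m$, so the lifting principle applied to the vertex $w$ collapses the entire star of $w$ using free faces of dimension $<m$ and leaves $L(V\sm w,P,m)$, which is $m$-collapsible by the induction on $|V\sm P|$.

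Feeding the free sequence $S_1,\ldots,S_p$ produced this way back through the lifting principle gives $\mathbf E=E\cup S_1,\ldots,E\cup S_p$, a free sequence of $\Sigma$ with $\Sigma\coll{\mathbf E}=\Sigma\sm E$ and $\dim(E\cup S_i)=e+\dim S_i\le e+(d-e-1)=d-1$, exactly as wanted. I expect the main obstacle to be the $m$-collapsibility of $L(V,P,m)$: it mixes a low-dimensional skeleton with a possibly high-dimensional simplex (this occurs exactly when $\dim G\ge d$), so neither pairing along a single vertex nor a direct appeal to Tancer's \cref{l:experimental} removes all faces at once. The recursion through links is the decisive device, since it lets the same lifting principle perform both the final collapse of the star of $E$ and, one level down, the collapses needed to reduce the link itself.
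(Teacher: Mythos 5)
Your proof is correct, but it reaches the conclusion by a genuinely different route than the paper. The paper's argument rests on one observation (the same Helly-type fact you use to compute the link): every face of $\Sigma$ of dimension $\geq d$ containing $E$ lies inside the unique facet $G$ of $\Gamma$ containing $E$. It then deletes the faces of the star of $E$ that are \emph{not} contained in $G$ one at a time, in decreasing order of dimension starting at $d-1$; at each stage such a face is a \emph{facet} of the current complex (any strictly larger face would contain $E$, have dimension $\geq d$, and hence lie in $G$), so each elementary collapse removes exactly that one face, and a single final collapse at the now-free face $E$ wipes out the interval $[E,G]$ and yields $\Sigma \sm E$. You instead identify $\link_\Sigma(E)$ as a skeleton-plus-simplex $L(V,P,m)$, prove a standalone lemma that such complexes are $m$-collapsible by recursing through links of vertices outside $P$, and transport the collapse back through a lifting principle ($E\cup S$ is free in $\Sigma'$ iff $S$ is free in $\link_{\Sigma'}(E)$, and links commute with collapses). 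I checked the bookkeeping: the link computation, the equality $\link_{\Sigma'\coll{E\cup S}}(E)=\link_{\Sigma'}(E)\coll{S}$, the dimension bound $\dim(E\cup S_i)=|E|+\dim S_i\leq d-1$, and the base cases $m=0$ and $V=P$ all hold, and the case $\dim E = d-1$ correctly degenerates to the single collapse at $E$. What your approach buys is two reusable general tools (the lifting principle and the collapsibility of skeleton-union-simplex complexes) at the cost of a double induction; the paper's ad hoc argument is shorter because its ``outside'' faces are facets of the intermediate complexes, so no auxiliary lemma is needed. The free sequences produced are also genuinely different: the paper ends with one collapse at $E$ removing all of $[E,G]$ at once, whereas you first peel $\langle G\sm E\rangle$ vertex by vertex.
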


    \begin{proof} If $E$ is a free face of $\Sigma$  we
     set $\mathbf{E}=E$ and we are done.

     Suppose that $E$ is not a free face for $\Sigma$ and $E$ is
     contained in a unique facet $F$ of $\Gamma$. Then $\dim E<d-1$,
     because if $\dim E=d-1$, then either $E$ is a facet of $\Gamma$ in
     which case it will be a facet of $\Sigma$, or all $d$-faces of
     $\Gamma$ containing $E$ are contained in $F$, which makes $F$
     also the unique facet of $\Sigma$ containing $E$.

     Let
     $\mathbf{E^1}=\{E^1_1, \ldots,E^1_{m_1}\}$ be  the set of
     $(d-1)$-faces of $\Sigma$ which contain $E$ but are not subsets
     of $F$. Since all $d$-faces of $\Gamma$ containing $E$ are in $F$, there is no
     face of dimension $\geq d$ in $\Sigma$ which contains $E^1_i$ for
     $1\leq i\leq m_1$. This implies that $E^1_i$ is a facet in
     $\Sigma$ for any $i$. So $\mathbf{E^1}$ gives a  free sequence of $\Sigma$.

     If $F$ is the only facet in $\Sigma\coll{\mathbf{E^1}}$
     which contains $E$, then $E$ is a free face and we set
     $$\mathbf{E}= E^1_1, \ldots,E^1_{m_1}, E \mbox{ so that }
     \Sigma\coll{\mathbf{E}}=\Sigma\sm E.$$

     Otherwise, let $\mathbf{E^2}$ be the set of all $(d-2)$-faces of
     $\Sigma\coll{\mathbf{E^1}}$ which contain $E$ but are not subsets
     of $F$. Once again, $\mathbf{E^2}$ is a free sequence in
     $\Sigma\coll{\mathbf{E^1}}$, and continuing in this way after a
     finite number of steps, we get the free sequence
     $$\mathbf{E}=\mathbf{E^1}, \ldots, \mathbf{E^s},E$$ of $\Sigma$
     of dimension $\leq d-1$ for which
     $\Sigma\coll{\mathbf{E}}=\Sigma\sm{E}$.
			    \end{proof}

   \begin{Theorem}[{\bf The $d$-closure of a  $d$-collapsible complex is $d$-collapsible}]\label{d-collapsible skeleton}
     Let $\Gamma$ be a $d$-collapsible simplicial complex for some
     $d\geq 1$. Then $\Delta_{d}(\Gamma)$ is $d$-collapsible. 
\end{Theorem}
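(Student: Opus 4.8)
The plan is to induct on the number of faces of $\Gamma$, peeling off one collapse at a time. If $\Gamma=\emptyset$ then $\Delta_d(\Gamma)=\langle [n]\rangle^{[d-1]}$, which is $d$-collapsible by removing its facets (all of dimension $d-1<d$) one after another. Otherwise, fix a free sequence $E_1,\ldots,E_t$ with $\dim E_i<d$ realizing the $d$-collapse of $\Gamma$, and set $\Gamma'=\Gamma\coll{E_1}=\Gamma\sm E_1$. Since $E_2,\ldots,E_t$ is a free sequence collapsing $\Gamma'$ to $\emptyset$, the complex $\Gamma'$ is $d$-collapsible, and it has strictly fewer faces than $\Gamma$ (at least $E_1$ is gone); so by the induction hypothesis $\Delta_d(\Gamma')$ is $d$-collapsible.

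The first move on the closure is to descend from $\Delta_d(\Gamma)$ to $\Delta_d(\Gamma)\sm E_1$. Because $E_1$ is a free face of $\Gamma$ with $\dim E_1\le d-1$, \cref{a simp induces a sequence} furnishes a free sequence $\mathbf{E}$ of $\Delta_d(\Gamma)$, all of whose faces have dimension $\le d-1<d$, with $\Delta_d(\Gamma)\coll{\mathbf{E}}=\Delta_d(\Gamma)\sm E_1$. Thus $\Delta_d(\Gamma)$ elementarily $d$-collapses onto $\Delta_d(\Gamma)\sm E_1$, and it remains only to show that this latter complex is $d$-collapsible.

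To see this I would compare $\Delta_d(\Gamma)\sm E_1$ with $\Delta_d(\Gamma')=\Delta_d(\Gamma\sm E_1)$. These two closures have the same $d$-faces (namely the $d$-faces of $\Gamma$ avoiding $E_1$), and a face-by-face check shows they agree in every dimension $>d$ as well: for $\dim F>d$, since $\dim E_1<d$ the containment $E_1\subseteq F$ is equivalent to $E_1$ lying in some $d$-subface of $F$, so the defining conditions of the two complexes coincide. The only discrepancy is in dimensions $\le d-1$, where $\Delta_d(\Gamma\sm E_1)$ contains every subset of size $\le d$, in particular those containing $E_1$, whereas $\Delta_d(\Gamma)\sm E_1$ has deleted exactly the faces containing $E_1$. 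Consequently $\Delta_d(\Gamma)\sm E_1=\Delta_d(\Gamma\sm E_1)\sm E_1$, and moreover every face of $\Delta_d(\Gamma\sm E_1)$ containing $E_1$ has dimension $\le d-1$ (no face of dimension $\ge d$ can contain $E_1$). This is precisely the hypothesis of \cref{special subcomplex is d-coll} for the $d$-collapsible complex $\Delta_d(\Gamma')$ and the set $E_1$, so $\Delta_d(\Gamma')\sm E_1=\Delta_d(\Gamma)\sm E_1$ is $d$-collapsible. Combined with the collapse produced by \cref{a simp induces a sequence}, this shows $\Delta_d(\Gamma)$ is $d$-collapsible.

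The main obstacle is the bookkeeping in the previous paragraph: correctly identifying $\Delta_d(\Gamma)\sm E_1$ with $\Delta_d(\Gamma\sm E_1)\sm E_1$ and verifying the dimension bound on facets containing $E_1$, since the two complexes $\Delta_d(\Gamma)\sm E_1$ and $\Delta_d(\Gamma\sm E_1)$ genuinely differ in low dimensions (the latter is a $d$-closure while the former need not be). Everything else—the reduction step, the invocations of \cref{a simp induces a sequence} and \cref{special subcomplex is d-coll}, and the fact that $\Gamma'$ is $d$-collapsible with fewer faces—is routine once this identification is in place.
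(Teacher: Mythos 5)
Your proof is correct and follows essentially the same route as the paper's: induct (the paper uses the length of the shortest free sequence rather than the number of faces, which is immaterial), apply the induction hypothesis to $\Gamma\coll{E_1}$, identify $\Delta_d(\Gamma)\sm E_1$ with $\Delta_d(\Gamma\coll{E_1})\sm E_1$ and invoke \cref{special subcomplex is d-coll}, then reach $\Delta_d(\Gamma)\sm E_1$ from $\Delta_d(\Gamma)$ via the free sequence supplied by \cref{a simp induces a sequence}. Your face-by-face verification of the identification is a little more explicit than the paper's, but the argument is the same.
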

			
\begin{proof} Let $\Sigma:= \Delta_{d}(\Gamma)$ and $r$
  be the length of the shortest free sequence of $\Gamma$ which
  $d$-collapses it into $\langle\emptyset\rangle$.  We proceed by
    induction on $r$. If $r=0$ then $\Gamma=\langle \emptyset \rangle$
    and $\Delta_{d}(\Gamma)=\langle [n] \rangle^{[d-1]}$, which is
    $d$-collapsible, because the facets are free faces and all of them have dimension$<d$.

    For the general case, suppose $E$ is the first element in the
    shortest free sequence of length $r$ for $\Gamma$. Then
    $\Gamma\coll{E}$ is $d$-collapsible using a sequence of length
    $r-1$, so by induction hypothesis 
     $\Delta_d(\Gamma\coll{E})$ is
    $d$-collapsible. Now
    $$\Sigma\sm{E}=\Delta_d(\Gamma\coll{E})\sm E = 
      \Delta_d(\Gamma\coll{E})\sm \{G\subset [n] \st E\subseteq G,
    \dim G\leq d-1\}.$$
    
    Since the maximal elements of $\{G\subset [n] \st E\subseteq G,
    \dim G\leq d-1\}$ are facets of $\Delta_d(\Gamma\coll{E})$ and no
    other facet of $\Delta_d(\Gamma\coll{E})$ contains $E$,
    \cref{special subcomplex is d-coll} implies that
    $\Sigma\sm{E}$ is $d$-collapsible.  By \cref{a simp induces a sequence}, there
    is a free sequence $\mathbf{E}=E_1,\ldots,E_t$ of $\Sigma$ of
    dimension $<d$ such that
\begin{align*}
         \Sigma\coll{\mathbf{E}}=
                          \Sigma\sm E
\end{align*}
                        which implies that $\Sigma$ is
                        $d$-collapsible.
	\end{proof}

\Cref{d-collapsible skeleton} has a surprising consequence,
combinatorially, but as we will see later, also algebraically. For the 
experts in clutters,  Part~(b) of the statement below is equivalent to
Theorem~4.3 in Nikseresht's work~\cite[Theorem~4.3]{Nik}. The main 
observation that is behind the statements below is  that if $\Sigma$ is 
$d$-collapsible, then it is $t$-collapsible for all $t\geq d$.

\begin{Proposition}[{\bf $d$-collapsible implies $t$-chordal for
      $t\geq d$}]\label{d-coll=>t-chordal}
     Let $\Gamma$ be a simplicial complex on $[n]$, and let $d\geq 1$. 

      \begin{itemize}

      \item[\rm (a)] If $\Gamma$ is $d$-collapsible then $\Gamma$ is
        $t$-chordal for all $t\geq d$.

      \item[\rm (b)] If $\Gamma$ is $d$-chordal then the simplicial
        complex $\Delta_d(\Gamma)$ is $t$-chordal for all $t\geq d$.

      \item[\rm(c)] If $\Gamma$ is a $d$-closure then $\Gamma$ is
        chordal if and only if $\Gamma$ is $d$-chordal.

      \end{itemize}

\end{Proposition}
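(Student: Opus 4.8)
The plan is to build everything on the single observation emphasized just before the statement: an elementary $d$-collapsing deletes a free face $E$ with $\dim E<d$, and if $t\geq d$ then $\dim E<d\leq t$, so the same move is an elementary $t$-collapsing. Hence a $d$-collapse sequence reducing a complex to $\emptyset$ is simultaneously a $t$-collapse sequence, and so \emph{$d$-collapsible implies $t$-collapsible for every $t\geq d$}. I will also use repeatedly that ``$\Gamma$ is $t$-chordal'' is by definition a statement about $\Delta_t(\Gamma)$ alone, and that $\Delta_t(\Delta_t(\Gamma))=\Delta_t(\Gamma)$, so $\Gamma$ is $t$-chordal if and only if $\Delta_t(\Gamma)$ is. Combined with \cref{d-collapsible=chordal} applied at level $t$ (which governs $t$-closures, and $\Delta_t(\Gamma)$ is one), this yields the master equivalence
$$\Gamma \text{ is } t\text{-chordal} \iff \Delta_t(\Gamma) \text{ is } t\text{-collapsible}.$$

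For part (a), I would chain the implications: $\Gamma$ $d$-collapsible $\Rightarrow$ $\Gamma$ $t$-collapsible (the observation, using $t\geq d$) $\Rightarrow$ $\Delta_t(\Gamma)$ $t$-collapsible (by \cref{d-collapsible skeleton} applied at level $t$) $\Rightarrow$ $\Gamma$ $t$-chordal (master equivalence). Part (b) is then immediate: if $\Gamma$ is $d$-chordal then $\Delta_d(\Gamma)$ is $d$-collapsible by \cref{d-collapsible=chordal}, and applying part (a) to the complex $\Sigma=\Delta_d(\Gamma)$ gives that $\Sigma$ is $t$-chordal for all $t\geq d$, which is exactly the assertion.

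For part (c), the forward direction is the definition of chordal. For the converse, assume $\Gamma$ is a $d$-closure and $d$-chordal; since $\Delta_d(\Gamma)=\Gamma$, \cref{d-collapsible=chordal} shows $\Gamma$ is $d$-collapsible, and part (a) then delivers $t$-chordality for every $t\geq d$. The remaining and only genuinely new range is $1\leq t<d$. Here I would observe that a $d$-closure has all subsets of size $\leq d$ as faces; since $t<d$ forces $t+1\leq d$, every $(t+1)$-subset of $[n]$ is already a face of $\Gamma$, i.e.\ $\Gamma$ contains all $t$-faces. Feeding this into \cref{d:d-closure} shows every subset of $[n]$ meets the defining condition for $\Delta_t(\Gamma)$, so $\Delta_t(\Gamma)=\langle [n]\rangle$. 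By \cref{complete} the full simplex $\langle [n]\rangle$ is chordal, hence $t$-chordal, and since $\Gamma$ and $\langle [n]\rangle$ share the same $t$-closure, $\Gamma$ is $t$-chordal. Combining the two ranges yields $t$-chordality for all $t\geq 1$, i.e.\ $\Gamma$ is chordal.

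The main obstacle is the low-degree range $t<d$ in part (c): parts (a), (b), and the $t\geq d$ half of (c) are formal consequences of the two earlier theorems together with the collapsibility observation, but for $t<d$ the collapsing machinery does not apply directly and one must instead recognize that $\Delta_t(\Gamma)$ degenerates to the full simplex. I expect the only care needed there is the bookkeeping that $t+1\leq d$ really forces all $t$-faces to be present, so that $\Delta_t(\Gamma)=\langle [n]\rangle$.
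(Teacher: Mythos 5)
Your proposal is correct and follows essentially the same route as the paper: both rest on the observation that $d$-collapsible implies $t$-collapsible for $t\geq d$, then chain \cref{d-collapsible skeleton} and \cref{d-collapsible=chordal} for parts (a) and (b), and handle the range $t<d$ in part (c) by noting $\Delta_t(\Gamma)=\langle[n]\rangle$ and invoking \cref{complete}. The only cosmetic difference is that you route the $t\geq d$ half of (c) through part (a) where the paper cites part (b); the two are interchangeable here.
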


\begin{proof}
  (a) By the definition of $d$-collapsibility, we know that $\Gamma$
  is $t$-collapsible for all $t\geq d$. Using \cref{d-collapsible
    skeleton}, for all $t\geq d$, $\Delta_t(\Gamma)$ is
  $t$-collapsible. Hence by \cref{d-collapsible=chordal}, for all
  $t\geq d$, $\Delta_t(\Gamma)$ is $t$-chordal.  Therefore $\Gamma$ is
  $t$-chordal for $t\geq d$.

  (b) Since $\Gamma$ is $d$-chordal for some $d\geq 1$, by
  \cref{d-collapsible=chordal} the simplicial complex
  $\Sigma=\Delta_d(\Gamma)$ is $d$-collapsible. Hence $\Sigma$ is
  $t$-collapsible for all $t\geq d$. \Cref{d-collapsible skeleton}
  implies that $\Delta_t(\Sigma)$ is $t$-collapsible, and by
  \cref{d-collapsible=chordal}, $\Sigma$ is $t$-chordal for all $t\geq
  d$.
  
  (c) Suppose $\Gamma$ is $d$-chordal, and let $t<d$. Since
  $\Gamma$ contains all $(t+1)$-subsets of $[n]$ we have
  $\Delta_t(\Gamma)=\langle [n]\rangle$ which by \cref{complete} is
  $t$-chordal. For $t\geq d$, since $\Gamma=\Delta_d(\Gamma)$ Part~(b)
  implies the assertion.
\end{proof}

Note that in the assumption of \cref{d-coll=>t-chordal}(a), we cannot replace
$d$-collapsibility of $\Gamma$ with $d$-collapsibility of
$\Delta_d(\Gamma)$ (or, equivalently, by  $d$-chordality of $\Gamma$). Let
$\Gamma$ be a simplicial complex which is not chordal and let $d\geq
1$ be an integer with $d<r$, where $r$ is the smallest dimension of
the nonfaces of $\Gamma$. Then $\Delta_d(\Gamma)=\langle [n]\rangle$,
which is $d$-chordal and hence by \cref{d-collapsible=chordal} it is
$d$-collapsible. But since $\Gamma$ is not chordal, there exists
$t\geq r>d$ such that $\Gamma$ is not $t$-chordal.

We now examine the relation between free faces of dimension $d-1$ in a
simplicial complex $\Gamma$ and its $d$-closure.  We saw in \cref{Example2}
  that it is possible for $\Gamma$ to be
  $d$-chordal without having a free face of dimension $d-1$. In other
  words $\Delta_d(\Gamma)$ having a free face of dimension $d-1$ does
  not imply that $\Gamma$ has a free face of dimension $d-1$. But the
  converse is true.

      \begin{Proposition}\label{free sequence=>simp sequence} 
         Let $\Gamma$ be a simplicial complex on the vertex set $[n]$,
         and let $\mathbf{E}=E_1,\ldots,E_t$ be a sequence of
         $(d-1)$-faces of $\Gamma$ with the property that $E_1$ is a free face in
         $\Gamma$ and $E_i$ is a free face in
         $\Gamma\sr{E_1,\ldots,E_{i-1}}$ for $i>1$. Then
\begin{itemize}

\item[{\rm(a)}]   $\mathbf{E}$ is a simplicial sequence for $\Delta_d(\Gamma)$;
\item[{\rm(b)}] If $\Gamma\sr{\mathbf{E}}\subseteq\langle
  [n]\rangle^{[d-1]}$, then $\mathbf{E}$ contains  a simplicial order for
  $\Delta_d(\Gamma)$.
\end{itemize}
\end{Proposition}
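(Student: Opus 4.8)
The plan is to isolate the one genuinely new ingredient as a stand‑alone observation and then build both parts from it together with \cref{l:easy}. \emph{Key observation.} First I would prove the ``converse'' asserted just before the statement: if $\Sigma$ is any simplicial complex on $[n]$ and $E$ is a free face of $\Sigma$ with $\dim E=d-1$, then $E\in\Simp(\Delta_d(\Sigma))$. Let $F$ be the unique facet of $\Sigma$ containing $E$; the starting remark is that every face $G$ of $\Sigma$ with $E\subseteq G$ satisfies $G\subseteq F$, since $G$ lies in some facet which, containing $E$, must equal $F$. If $\dim F=d-1$, so $E=F$ is a facet of $\Sigma$, then $E$ is a facet (hence a free face) of $\Delta_d(\Sigma)$: a face $G\supsetneq E$ of the $d$-closure would contain a size-$(d+1)$ set $E\cup\{v\}$, a $d$-face of $\Delta_d(\Sigma)$ and therefore a $d$-face of $\Sigma$ properly containing the facet $E$, which is absurd. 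If $\dim F\geq d$, then $F\in\Delta_d(\Sigma)$ since all its $d$-subsets lie in $\Sigma$, and for any face $G$ of $\Delta_d(\Sigma)$ containing $E$ and any $v\in G\setminus E$, the size-$(d+1)$ set $E\cup\{v\}$ is a $d$-face of $\Delta_d(\Sigma)$, hence a $d$-face of $\Sigma$ containing $E$, so $E\cup\{v\}\subseteq F$ and thus $G\subseteq F$. Therefore $F$ is the unique facet of $\Delta_d(\Sigma)$ containing $E$, and $E$ is simplicial.

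For part (a), I would iterate \cref{l:easy} to obtain $\Delta_d(\Gamma)\sr{E_1,\ldots,E_{i-1}}=\Delta_d(\Gamma\sr{E_1,\ldots,E_{i-1}})$ for every $i$; this is a routine induction, using that a $(d-1)$-face deletion removes only faces of dimension $\geq d$, so each later $E_j$ is still present. Since $E_i$ is by hypothesis a free face of dimension $d-1$ of $\Gamma\sr{E_1,\ldots,E_{i-1}}$, the key observation applied to $\Sigma=\Gamma\sr{E_1,\ldots,E_{i-1}}$ gives $E_i\in\Simp(\Delta_d(\Gamma\sr{E_1,\ldots,E_{i-1}}))=\Simp(\Delta_d(\Gamma)\sr{E_1,\ldots,E_{i-1}})$, which is exactly the condition for $\mathbf{E}$ to be a simplicial sequence of $\Delta_d(\Gamma)$.

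For part (b), I would pass to the subsequence $\mathbf{E}'$ of those $E_i$ that are not facets of $\Delta_d(\Gamma)\sr{E_1,\ldots,E_{i-1}}$. Since $\Sigma\sr{E}=\Sigma$ for a $d$-closure exactly when $E$ is a facet, deleting the discarded terms changes nothing: the intermediate complexes along $\mathbf{E}'$ coincide with those along $\mathbf{E}$, each surviving term is simplicial and not a facet at its stage, and $\Delta_d(\Gamma)\sr{\mathbf{E}'}=\Delta_d(\Gamma)\sr{\mathbf{E}}$. To identify this terminal complex, I iterate \cref{l:easy} once more to get $\Delta_d(\Gamma)\sr{\mathbf{E}}=\Delta_d(\Gamma\sr{\mathbf{E}})$; the hypothesis $\Gamma\sr{\mathbf{E}}\subseteq\langle[n]\rangle^{[d-1]}$ says $\Gamma\sr{\mathbf{E}}$ has no $d$-face, so the $d$-closure definition forces $\Delta_d(\Gamma\sr{\mathbf{E}})=\langle[n]\rangle^{[d-1]}$. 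Thus $\mathbf{E}'$ satisfies all three requirements of a simplicial order of $\Delta_d(\Gamma)$ (if $\mathbf{E}'$ is empty then $\Delta_d(\Gamma)=\langle[n]\rangle^{[d-1]}$ already), so $\mathbf{E}$ contains a simplicial order.

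The hard part will be the key observation in the case $\dim F\geq d$: one must rule out any new facet of the $d$-closure through $E$ that could destroy freeness, and this is precisely the vertex-by-vertex argument showing that every face of $\Delta_d(\Sigma)$ containing $E$ stays inside $F$. Everything after that reduces to \cref{l:easy} and the trivial behaviour of facet-deletions on $d$-closures.
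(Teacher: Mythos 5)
Your proof is correct and follows essentially the same route as the paper: show that a free $(d-1)$-face of $\Gamma$ stays simplicial in the $d$-closure because every $d$-face of $\Delta_d(\Gamma)$ through it is already a $d$-face of $\Gamma$ and hence lies in the unique facet $F$, then iterate via \cref{l:easy}, and for (b) pass to the subsequence of non-facets and use that $\Delta_d(\Gamma\sr{\mathbf{E}})=\langle[n]\rangle^{[d-1]}$. If anything, your treatment of (b) is slightly more careful than the paper's, which discards the $E_i$ that are facets of $\Delta_d(\Gamma)$ rather than facets at their stage; your choice is the one that literally matches the definition of a simplicial order.
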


\begin{proof}
  (a) Let $F$ be the unique facet in $\Gamma$ containing $E_1$.  Since
  $\Gamma\subseteq \Delta_d(\Gamma)$ we have $F\in
  \Delta_d(\Gamma)$. Suppose $E_1\cup \{v\}\in \Delta_d(\Gamma)$ for
  some $v\in [n]-E$. Then $E_1\cup \{v\}\in \Gamma$ because any
  $d$-face of $\Delta_d(\Gamma)$ belongs to $\Gamma$. It follows that
  $v\in F$. Hence $F$ is the only facet containing $E_1$ in
  $\Delta_d(\Gamma)$. Thus $E_1$ is simplicial in $\Delta_d(\Gamma)$,
  and by \cref{l:easy} $\mathbf{E}$ is a simplicial sequence for
  $\Delta_d(\Gamma)$.

\medspace
  (b) Let $\bf{E'}$ be the subsequence of $\bf{E}$ which contains all
  elements in $\bf{E}$ which are not facets in
  $\Delta_d(\Gamma)$. Note that if $E_i$ in $\bf{E}$ is a facet of
  $\Delta_d(\Gamma)$, then it is a facet in $\Gamma$ too. Hence
  $\Gamma\sr{E_1,\ldots,E_{i}}=\Gamma\sr{E_1,\ldots,E_{i-1}}$. Now,
  Part $(a)$ implies that $\bf{E'}$ is a simplicial sequence for
  $\Delta_d(\Gamma)$ and the assertion follows from the fact that
  $\Delta_d(\Gamma)=\langle [n]\rangle^{[d-1]}$, whenever
  $\Gamma\subseteq \langle [n]\rangle^{[d-1]}$.
\end{proof}

  As promised earlier, in \cref{the bounds for checking chordality} we show that to
  check chordality, it is enough to check $d$-chordality for a finite
  number of positive integers $d$. Note that \cref{d-coll=>t-chordal}(c) can be
  also deduced from \cref{the bounds for checking chordality}.

\begin{Proposition}\label{the bounds for checking chordality}
  Let $\Gamma$ be a simplicial complex with vertex set $[n]$ and
  dimension $r$, let $$t=\min\{\dim F \st F \subseteq [n],\ \text{a
    minimal nonface of } \Gamma\}$$ and $$s=\max\{\dim F \st
  F\subseteq [n],\ \text{a minimal nonface of } \Gamma\}.$$ The
  following conditions are equivalent.
  \begin{itemize}
  \item[\rm{(i)}] $\Gamma$ is chordal;
  \item[\rm{(ii)}] $\Gamma$ is $d$-chordal for $t\leq d\leq \min\{r, s\}$.
  \end{itemize}
\end{Proposition}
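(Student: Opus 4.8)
The plan is to prove the nontrivial implication (ii) $\Rightarrow$ (i); the reverse is immediate, since being chordal means being $d$-chordal for every $d\ge 1$. Assuming (ii), I would fix an arbitrary $d\ge 1$ and establish that $\Gamma$ is $d$-chordal by splitting the possible values of $d$ into three ranges: the tested range $t\le d\le \min\{r,s\}$, where $d$-chordality is exactly the hypothesis, and the two complementary ranges $d<t$ and $d>\min\{r,s\}$, which I claim are automatic. The guiding idea is that outside $[t,\min\{r,s\}]$ the $d$-closure $\Delta_d(\Gamma)$ either degenerates to a complex that is trivially $d$-chordal, or is governed by a smaller closure we have already tested.

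For the lower range $d<t$: since every minimal nonface of $\Gamma$ has at least $t+1$ vertices and $d+1\le t$, every subset of $[n]$ of size $\le d+1$ is a face of $\Gamma$. Reading off \cref{d:d-closure}, this forces $\Delta_d(\Gamma)=\langle[n]\rangle$, which is chordal, and hence $d$-chordal, by \cref{complete}.

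For the upper range $d>\min\{r,s\}$ I would distinguish how the minimum is attained. If $\min\{r,s\}=r$ and $d>r=\dim\Gamma$, then $\Gamma$ has no $d$-faces, so by \cref{d:d-closure} $\Delta_d(\Gamma)=\langle[n]\rangle^{[d-1]}$, which is $d$-chordal by the first clause of condition $(*)$. If instead $\min\{r,s\}=s$, then (ii) already gives that $\Gamma$ is $s$-chordal (note $t\le s$), and I would bootstrap this upward: by \cref{d-coll=>t-chordal}(b), $\Delta_s(\Gamma)$ is $d$-chordal for every $d\ge s$, and this translates to $\Gamma$ being $d$-chordal once I establish that $\Delta_d(\Delta_s(\Gamma))=\Delta_d(\Gamma)$.

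The crux, and the step I expect to require the most care, is this last identity $\Delta_d(\Delta_s(\Gamma))=\Delta_d(\Gamma)$ for $d\ge s$. Since the $d$-closure depends only on the pure $d$-skeleton, it suffices to show that $\Gamma$ and $\Delta_s(\Gamma)$ have the same $d$-faces when $d\ge s$. One inclusion is clear from $\Gamma\subseteq\Delta_s(\Gamma)$; for the other, given a $(d+1)$-set $F$ all of whose $(s+1)$-subsets lie in $\Gamma$, I would argue that any minimal nonface $N\subseteq F$ satisfies $|N|\le s+1$ by the definition of $s$, hence extends to an $(s+1)$-subset of $F$ which would itself be a nonface, a contradiction; thus $F\in\Gamma$. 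Finally I would record the elementary observation that a minimal nonface of dimension $s$ has all of its facets in $\Gamma$, forcing $s\le r+1$; this dovetailing guarantees that the two upper-range mechanisms ($d>r$ versus $d>s$) jointly cover every $d>\min\{r,s\}$, completing the argument. The degenerate case $\Gamma=\langle[n]\rangle$, in which there are no minimal nonfaces and $t,s$ are undefined, is disposed of directly by \cref{complete}.
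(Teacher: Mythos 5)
Your proposal is correct and follows essentially the same route as the paper: the same three-way case split on $d$, the same trivial handling of $d<t$ and $d>r$, and the same reduction of the case $d>s$ to the identity $\Delta_d(\Delta_s(\Gamma))=\Delta_d(\Gamma)$ combined with \cref{d-coll=>t-chordal}(b). Your verification of that identity via equality of pure $d$-skeletons is only a cosmetic repackaging of the paper's argument that both closures equal $\langle[n]\rangle^{[d-1]}\cup\Delta_s(\Gamma)$, resting on the same observation that minimal nonfaces have dimension at most $s$.
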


\begin{proof} The implication $(i)\implies (ii)$ follows from the
  definition of chordality.

  For $(ii)\implies (i)$, note that if $d<t$, then since all $F\subset
  [n]$ with $\dim F=d$, are in $\Gamma$, we have
  $\Delta_{d}(\Gamma)=\langle [n]\rangle$, which is $d$-chordal by
  \cref{complete}. So $\Gamma$ is $d$-chordal for $d<t$.

  If $d> r$, then there is no $d$-face in $\Gamma$ and so we will
  automatically have $\Delta_{d}(\Gamma)=\langle [n]\rangle^{[d-1]}$,
  which satisfies condition $(*)$ and is therefore $d$-chordal.

Now let  $\min\{r,s\}=s$ and $d>s$. We claim that 
\begin{align}\label{taking  closures twice}
\Delta_d(\Gamma)=\Delta_d(\Delta_s(\Gamma)).
\end{align}
Then since by assumption $\Delta_s(\Gamma)$ is $s$-chordal, it follows from  \cref{d-coll=>t-chordal}(b) that $\Delta_d(\Gamma)$  is $d$-chordal for $d\geq s$. This implies that $\Gamma$ is chordal, as desired.

Next we prove the claim. To do this we show that for $d\geq s$ 
\begin{align}
\label{first}\Delta_d(\Gamma)&=\langle [n]\rangle^{[d-1]}\cup \Delta_s(\Gamma),\text{ and}\\
\label{second}\Delta_d(\Delta_s(\Gamma))&=\langle [n]\rangle^{[d-1]}\cup \Delta_s(\Gamma).
\end{align}

To prove \cref{first}, we first observe that by definition $\langle
[n]\rangle^{[d-1]}\subseteq \Delta_d(\Gamma)$, and so we need to only
worry about faces of dimension $\geq d$. Let $F \subseteq [n]$ with
$\dim F \geq d$.

If $F\in \Delta_d(\Gamma)$ then any $d$-face of $F$ belongs to
$\Gamma$, and since $d\geq s$, it follows that any $s$-face of $F$ is
in $\Gamma$. Hence $F\in \Delta_s(\Gamma)$. 

If $F\in \Delta_s(\Gamma)$  and $F\notin
\Delta_d(\Gamma)$,  then there is $G\subseteq F$ with $\dim G=d$ and
$G\notin \Gamma$. Since the minimal nonfaces of $\Gamma$ have
dimension $\leq s$, there exists $H\subseteq G$, where $H \notin
\Gamma$ and $\dim H=s$. But $F\in \Delta_s(\Gamma)$, and any $s$-face
of $F$ belongs to $\Gamma$, hence $H\in \Gamma$, a contradiction. So
$F\in \Delta_d(\Gamma)$. This settles \cref{first}.

Now we prove \cref{second}. The containment $``\supseteq"$ holds by
definition. Suppose $F\in \Delta_d(\Delta_s(\Gamma))$. If $\dim F<d$,
then $F\in \langle [n]\rangle^{[d-1]}$. If $\dim F\geq d$, then any
$d$-face $G$ of $F$ belongs to $\Delta_s(\Gamma)$. Hence any $s$-face
$H$ of $G$ (and hence $F$) is in $\Gamma$. It follows that $F\in
\Delta_s(\Gamma)$, as desired. This settles \cref{second}.

\Cref{taking  closures twice} now follows, and the proof is complete.
\end{proof}

The following theorem shows that to check the chordality of
a simplicial complex, it is enough to check its $d$-collapsibility for
 one appropriate $d$.

\begin{Theorem}\label{d-collapsible for suitable d}
  If $\Gamma$ is a $d$-collapsible simplicial complex and $\dim
  F\geq d \geq 1$ for all nonfaces $F$ of $\Gamma$,   then $\Gamma$ is
  chordal.
\end{Theorem}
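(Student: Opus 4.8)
The plan is to reduce to the finite chordality check of \cref{the bounds for checking chordality} and then feed in the collapsibility hypothesis through \cref{d-coll=>t-chordal}(a). If $\Gamma$ has no nonfaces, then $\Gamma=\langle [n]\rangle$, which is chordal by \cref{complete}; so I may assume $\Gamma$ has at least one nonface and the quantities $t,s$ of \cref{the bounds for checking chordality} are defined. First I would reinterpret the hypothesis combinatorially: saying every nonface $F$ of $\Gamma$ satisfies $\dim F\geq d$ is equivalent to saying every \emph{minimal} nonface has dimension $\geq d$, since any nonface contains a minimal one of no larger dimension. In the notation of \cref{the bounds for checking chordality} this is exactly the assertion $t\geq d$, where $t$ is the smallest dimension of a minimal nonface of $\Gamma$.

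Next, since $\Gamma$ is $d$-collapsible, \cref{d-coll=>t-chordal}(a) tells me that $\Gamma$ is $t'$-chordal for every $t'\geq d$. Because $t\geq d$, every integer $d'$ in the testing range $t\leq d'\leq\min\{r,s\}$ satisfies $d'\geq t\geq d$, so $\Gamma$ is $d'$-chordal throughout this interval. That is precisely condition (ii) of \cref{the bounds for checking chordality}, whose equivalent condition (i) is that $\Gamma$ is chordal.

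Alternatively, one can verify $d'$-chordality by hand for all $d'\geq 1$, sidestepping the bounds proposition entirely: for $d'\geq d$ apply \cref{d-coll=>t-chordal}(a) directly, and for $1\leq d'<d$ observe that every set of dimension $d'\leq d-1$ is a face of $\Gamma$ (a nonface of such small dimension is forbidden by hypothesis), so that $\Delta_{d'}(\Gamma)=\langle [n]\rangle$; by \cref{complete} this complex is $d'$-chordal, and since $d'$-chordality depends only on the $d'$-closure, so is $\Gamma$. This route also handles the full-simplex edge case uniformly.

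There is no genuine obstacle here: the substance has already been isolated in \cref{d-coll=>t-chordal} and \cref{the bounds for checking chordality}, together with the observation (recorded before \cref{d-coll=>t-chordal}) that $d$-collapsibility propagates to all $t'\geq d$. The only point requiring care is translating the nonface hypothesis into the inequality $t\geq d$ and checking that this places the whole testing interval $[t,\min\{r,s\}]$ inside the range $[d,\infty)$ on which $d$-collapsibility already forces chordality.
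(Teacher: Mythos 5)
Your proposal is correct and follows essentially the same route as the paper: translate the nonface hypothesis into the inequality $d\leq\min\{\dim F \st F \text{ a nonface of }\Gamma\}$, invoke \cref{d-coll=>t-chordal}(a) to get $t'$-chordality for all $t'\geq d$, and conclude via \cref{the bounds for checking chordality}. Your extra care with the no-nonface edge case and the minimal-versus-arbitrary nonface distinction is sound but not a different argument.
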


\begin{proof} By assumption $d \leq r=\min\{\dim F \st F \text{ a
    nonface of } \Gamma\}$.  By \cref{d-coll=>t-chordal}(a), $\Gamma$ is
  $t$-chordal for all $t\geq d$, and in particular for all $t\geq r$,
  and so from \cref{the bounds for checking chordality}, $\Gamma$ is chordal.
\end{proof}

\begin{Example}\label{Example3} We continue with $\Gamma$ as in 
  \cref{Example0}. 
  Consider
  $$\Delta_1(\Gamma)=\langle \{1,2,4,5\},\{1,2,3,4\}\rangle$$ 
  calculated in \cref{Example0}. Then $E_1=\{5\}$ is contained in
  only one facet and hence is simplicial. So
$$\Delta_1(\Gamma)\sr{E_1}=\langle \{1,2,3,4\}\rangle\cup\langle \{5\}\rangle.$$

In order to see $\Gamma$ is $1$-chordal, now  it is enough to find a simplicial order for
$\langle \{1,2,3,4\}\rangle$. But it follows from \cref{complete}
that $\langle \{1,2,3,4\}\rangle$  admits a simplicial order. 

The work done in \cref{Example1} shows that
  $\Gamma$ is $2$-chordal. 

  Since $\max\{\dim F \st F\subseteq [n],\ \text{a minimal nonface of
  } \Gamma\}=2$, it follows from \cref{the bounds for checking chordality} that $\Gamma$ is
  chordal.
 
Note that $\Gamma$ is not $1$-collapsible and hence we cannot make use of \cref{d-collapsible for suitable d} to prove that $\Gamma$ is chordal.
\end{Example}

One can even see that the induced subcomplexes of a simplicial complex
inherit chordality.

\begin{Proposition}[\bf{Chordality of induced subcomplexes}]\label{induced complexes of chordals}
	Let $\Gamma$ be a simplicial complex on the vertex set $[n]$,
        $d$ be a positive integer and let $W\subset [n]$.
	\begin{itemize}
        \item[\rm(a)] For a face $E$ of $\Gamma$ one has
          $(\Gamma\sm E)_W=(\Gamma_W)\sm E$.
        \item[{\rm (b)}] As simplicial complexes on the vertex set
          $W$ we have $\Delta_d(\Gamma)_W=\Delta_d(\Gamma_W)$, and in
          particular if $\Gamma$ is a $d$-closure on $[n]$, then so is
          $\Gamma_W$ on $W$.
        \item[{\rm (c)}] If $E\subseteq W$ is a free face of $\Gamma$
          then $E$ is a free face of $\Gamma_W$.

        \item[{\rm (d)}] If $E\subseteq W$ with $E\in
          \Simp(\Delta_d(\Gamma))$, then $E\in
          \Simp(\Delta_d(\Gamma_W))$.
        \item[{\rm (e)}] If $\Gamma$ is $d$-chordal then $\Gamma_W$ is
          $d$-chordal.

	\item[{\rm (f)}] If $\Gamma$  is chordal then $\Gamma_W$ is chordal.
	\end{itemize}
\end{Proposition}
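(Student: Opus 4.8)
The plan is to establish the six parts essentially in the order given, since each builds on the previous ones, culminating in part (f), which is the actual goal. Parts (a) through (d) are bookkeeping lemmas about how the closure, deletion, free-face, and simplicial-face operations interact with passing to an induced subcomplex; parts (e) and (f) then assemble these into the inductive argument for chordality.

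For part (a), I would simply unwind the definitions: a face $G\subseteq W$ lies in $(\Gamma\sm E)_W$ iff $G\in\Gamma$, $G\subseteq W$, and $E\not\subseteq G$, which is exactly the condition for $G\in(\Gamma_W)\sm E$. For part (b), the key observation is that taking an induced subcomplex commutes with the three defining conditions of $\Delta_d$ in \cref{d:d-closure}: a subset $F\subseteq W$ has all its $(d{+}1)$-subsets in $\Gamma$ iff it has all of them in $\Gamma_W$ (since those subsets are automatically contained in $W$), and the $d$-faces and small faces match up trivially. For part (c), if $E\subseteq W$ is free in $\Gamma$ with unique facet $F$, then any facet of $\Gamma_W$ containing $E$ is a maximal face of $\Gamma$ inside $W$ containing $E$, hence contained in $F\cap W$; since $F\cap W$ is itself a face of $\Gamma_W$ containing $E$, the unique such facet is $F\cap W$, so $E$ is free in $\Gamma_W$. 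Part (d) then follows by combining (b) and (c): if $E\in\Simp(\Delta_d(\Gamma))$ then $E$ is a free face of $\Delta_d(\Gamma)$ of dimension $d-1$, so by (c) it is free in $\Delta_d(\Gamma)_W=\Delta_d(\Gamma_W)$, and since $\dim E=d-1$ and $\Delta_d(\Gamma_W)$ is a $d$-closure, $E$ is simplicial there.

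For part (e), I would argue by induction on the number of faces of $\Delta_d(\Gamma)$, mirroring the structure of condition $(*')$ in \cref{d:chordal}. If $\Delta_d(\Gamma)=\langle[n]\rangle^{[d-1]}$ then $\Delta_d(\Gamma_W)=\langle W\rangle^{[d-1]}$ by part (b), which is $d$-chordal by definition. Otherwise condition $(*')$ gives a simplicial face $E\in\Simp(\Delta_d(\Gamma))$ that is not a facet, with $\Gamma\sr{E}$ again $d$-chordal. The subtle point, and what I expect to be the main obstacle, is that $E$ need not be contained in $W$, so I cannot directly apply parts (c)/(d) to transport $E$ into $\Gamma_W$. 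I would handle this by a case split: if $E\subseteq W$, then by part (d) $E\in\Simp(\Delta_d(\Gamma_W))$, and using parts (a) and (b) one checks $(\Gamma\sr{E})_W=(\Gamma_W)\sr{E}$, so the induction hypothesis applied to the smaller $d$-chordal complex $\Gamma\sr{E}$ yields that $(\Gamma_W)\sr{E}$ is $d$-chordal; if in addition $E$ is not a facet of $\Delta_d(\Gamma_W)$ this verifies $(*')$ for $\Gamma_W$. If $E\not\subseteq W$ (or $E$ becomes a facet in the induced complex), then deleting $E$ from $\Gamma$ does not change the induced subcomplex, i.e. $(\Gamma\sr{E})_W=\Gamma_W$, so the induction hypothesis applied to $\Gamma\sr{E}$ immediately gives that $\Gamma_W$ is $d$-chordal with no further work. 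In either case the induction closes. Finally, part (f) is immediate: if $\Gamma$ is chordal then it is $d$-chordal for every $d\geq1$, so by part (e) $\Gamma_W$ is $d$-chordal for every $d\geq1$, which is exactly the definition of chordal.
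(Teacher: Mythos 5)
Your proof is correct, and parts (a)--(d) and (f) follow essentially the same route as the paper. Where you genuinely diverge is part (e): the paper does not induct at all, but instead converts the question into one about $d$-collapsibility via \cref{d-collapsible=chordal} and then quotes Wegner's result that an induced subcomplex of a $d$-collapsible complex is $d$-collapsible (\cite[Lemma~2]{We}), so its entire proof of (e) is three lines. Your argument replaces that external citation with a self-contained induction on the number of faces of $\Delta_d(\Gamma)$ driven by condition $(*')$ of \cref{d:chordal}, and you correctly isolate the one real difficulty --- the simplicial face $E$ supplied by $(*')$ need not lie in $W$, and even when $E\subseteq W$ it may become a facet of $\Delta_d(\Gamma_W)$ --- and dispose of both degenerate cases by observing that there $(\Gamma\sr{E})_W=\Gamma_W$ (respectively $(\Gamma_W)\sr{E}=\Gamma_W$), so the induction hypothesis applied to $\Gamma\sr{E}$ already gives the claim. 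The trade-off is clear: the paper's route is shorter and reuses machinery it has already built, at the cost of importing Wegner's lemma as a black box; yours is longer but stays entirely inside the definition of $d$-chordality and incidentally re-proves that hereditary property of $d$-collapsibility in the special case of $d$-closures. One small point worth making explicit in your write-up is why the induction is well-founded: since $E$ is not a facet of $\Delta_d(\Gamma)$, the complex $\Delta_d(\Gamma\sr{E})=\Delta_d(\Gamma)\sr{E}$ has strictly fewer faces than $\Delta_d(\Gamma)$, which is what licenses invoking the hypothesis on $\Gamma\sr{E}$.
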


\begin{proof}
(a) We have
\begin{align*}
F\in (\Gamma\sm E)_W
&\Leftrightarrow  F\subseteq W\text{ and } F\in \Gamma\sm E\\
&\Leftrightarrow F\subseteq W,\ F\in \Gamma\text{ and }E\not\subseteq F\\
&\Leftrightarrow F\in \Gamma_W\text{  and }E\not\subseteq F\\
&\Leftrightarrow F\in (\Gamma_W)\sm E.
\end{align*}

\medspace (b) By definition of $d$-closure both simplicial complexes
$\Delta_d(\Gamma)_W$ and $\Delta_d(\Gamma_W)$ contain all subsets of
$W$ of cardinality~$\leq d$. Let $F \subseteq W$ with $|F|>d$. Then by
definition of $d$-closures
\begin{center}$F\in \Delta_d(\Gamma)_W$ 
$\Leftrightarrow$  all $d$-faces of $F$ are in $\Gamma$
 $\Leftrightarrow$ all $d$-faces of $F$ are in $\Gamma_W$
$\Leftrightarrow$ $F\in \Delta_d(\Gamma_W)$ 
\end{center}
which settles our claim.

 \medspace (c)  Suppose $E$
  is contained in the unique facet $F$ of $\Gamma$.  Since the
  facets of $\Gamma_W$ are the maximal elements of $\{G\cap W \st G\in
  \facets(\Sigma)\}$, we see that $E$ is contained in the unique facet
  $F\cap W$ of $\Gamma_W$. Hence $E$ is a free face of $\Gamma_W$.
  
\medspace (d)  Follows from Part~(b) and Part~(c).

\medspace (e) Since $\Gamma$ is $d$-chordal,
\cref{d-collapsible=chordal} implies that $\Delta_d(\Gamma)$ is
$d$-collapsible. It follows from \cite[Lemma~2]{We}  that
$\Delta_d(\Gamma_W)=\Delta_d(\Gamma)_W$ is  $d$-collapsible, and  hence by
\cref{d-collapsible=chordal} $\Gamma_W$ is $d$-chordal. 

\medspace (f) Since $\Gamma$ is chordal, it is $d$-chordal for
all $d\geq 1$. Part~(e) implies the assertion.

\end{proof}


\subsection*{$d$-representable complexes}

				Let $A = \{A_1,A_2, \ldots ,A_n\}$ be a family of sets.  Consider the following family of subsets of $A$
			\[ N(A) := \{F \subset [n] :  \ \cap_{i\in F}A_i\neq \emptyset\}.
			\]
This finite family is a simplicial complex which is called the {\bf
  Nerve Complex} of $A$. A simplicial complex which is the nerve
complex of some finite family of convex sets in $\mathbb{R}^d$ is
called {\bf$d$-representable}. One of the main problems regarding
nerve complexes is to characterize $d$-representable complexes. This
problem is solved in case $d=1$, see \cite{LB}. For $d>1$ the problem
is still open. The reader may consult with \cite{We} for more
information about $d$-representable complexes.

\begin{Theorem}[{\bf $d$-representable complexes are $d$-chordal}]\label{representable}
Let $\Gamma$ be a $d$-representable simplicial complex on the vertex
set $[n]$. Then $\Delta_d(\Gamma)=\Gamma\cup \langle
[n]\rangle^{[d-1]}$. Moreover, $\Gamma$ is $d$-chordal.
\end{Theorem}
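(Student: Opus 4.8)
The plan is to prove the two assertions separately, leaning on Helly's theorem for the structural equality and on Wegner's classical result together with \cref{d-coll=>t-chordal} for the chordality. Throughout I would write $\Gamma=N(A)$ for a family $A=\{A_1,\dots,A_n\}$ of convex sets in $\mathbb{R}^d$, so that $F\in\Gamma$ if and only if $\bigcap_{i\in F}A_i\neq\emptyset$.

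For the equality $\Delta_d(\Gamma)=\Gamma\cup\langle[n]\rangle^{[d-1]}$, the containment $\supseteq$ is automatic: $\langle[n]\rangle^{[d-1]}\subseteq\Delta_d(\Gamma)$ since every subset of $[n]$ of cardinality $\leq d$ is a face of the $d$-closure, and $\Gamma\subseteq\Delta_d(\Gamma)$ holds for every simplicial complex directly from \cref{d:d-closure}. For $\subseteq$ it suffices to check that any face $F\in\Delta_d(\Gamma)$ with $\dim F\geq d$ already lies in $\Gamma$. When $\dim F=d$ this is immediate, since by definition the $d$-faces of $\Delta_d(\Gamma)$ are exactly those of $\Gamma$. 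When $\dim F>d$, membership $F\in\Delta_d(\Gamma)$ says precisely that every $(d+1)$-element subset $G\subseteq F$ is a face of $\Gamma$, that is, $\bigcap_{i\in G}A_i\neq\emptyset$. Thus among the convex sets $\{A_i : i\in F\}$ in $\mathbb{R}^d$ every $d+1$ of them have a common point, and since $|F|\geq d+2$, Helly's theorem forces $\bigcap_{i\in F}A_i\neq\emptyset$, i.e. $F\in\Gamma$. This establishes the claimed equality.

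For the ``moreover'' statement I would invoke Wegner's theorem \cite{We} that every $d$-representable complex is $d$-collapsible. Once $\Gamma$ is known to be $d$-collapsible, \cref{d-coll=>t-chordal}(a), applied with $t=d$, immediately yields that $\Gamma$ is $d$-chordal, completing the argument.

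The only genuinely nontrivial input is Helly's theorem, which converts the purely combinatorial hypothesis ``every $(d+1)$-element subset is a face'' into an actual common intersection point; everything else is bookkeeping with the definition of the $d$-closure and a citation of Wegner's $d$-collapsibility result. In this sense I do not expect a serious obstacle beyond recognizing that the nerve condition is exactly the Helly hypothesis and that the passage from $d$-collapsibility to $d$-chordality has already been carried out in \cref{d-coll=>t-chordal}.
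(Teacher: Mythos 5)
Your proposal is correct and follows essentially the same route as the paper: Helly's theorem gives the equality $\Delta_d(\Gamma)=\Gamma\cup\langle[n]\rangle^{[d-1]}$, and Wegner's $d$-collapsibility result supplies the chordality. The only cosmetic difference is that you cite \cref{d-coll=>t-chordal}(a) for the last step, while the paper invokes its two ingredients (\cref{d-collapsible skeleton} and \cref{d-collapsible=chordal}) directly; these are the same argument.
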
                      
\begin{proof} The inclusion $\Gamma\cup \langle [n]\rangle^{[d-1]}\subseteq
       \Delta_d(\Gamma)$ always holds. For the converse, we use a
       celebrated theorem of Helly, \cite{He}, which states that if
       each $d+1$ members of a finite family of at least $d+1$ convex
       sets in $\mathbb{R}^d$ have nonempty intersection, then the
       whole family intersects. This implies that if $F\subset [n]$,
       $|F| \geq d+1$ and each $(d+1)$-subset of $F$ belongs to
       $\Gamma$, then $F\in\Gamma$. Hence any $t$-face of
       $\Delta_d(\Gamma)$ with $t\geq d$ is a face in $\Gamma$. It
       follows that $\Delta_d(\Gamma) \subseteq \Gamma\cup \langle
       [n]\rangle ^{[d-1]}$. This proves the equality.
			
       Wegner~\cite{We} proved that $d$-representable complexes are
       $d$-collapsible, so $\Gamma$ is $d$-collapsible, and by
       \cref{d-collapsible skeleton}, $\Delta_d(\Gamma)$ is
       $d$-collapsible as well. Now
       \cref{d-collapsible=chordal} yields the result.
			\end{proof}

      \begin{Remark} The converse of \cref{representable} does not hold in
      general. Let $\Gamma$ be a simplicial complex of dimension $<d$
      which is not $d$-representable, for example the complex $C_2$ in
      \cite[Figure~2]{We}. Then $\Delta_d(\Gamma)=\langle
           [n]\rangle^{[d-1]}$ and hence $\Gamma$ is $d$-chordal by definition.

           The converse of \cref{representable} is not true even for
           $d$-closures: there are simple examples of $d$-closures
           which are $d$-chordal but not $d$-representable:
           \cref{star} illustrates a chordal graph which can be
           viewed as a $1$-closure simplicial complex (and hence
           $1$-chordal). But since it is not an interval graph it is
           not $1$-representable.

      \begin{figure}[ht!]
					\begin{center}
		\begin{tikzpicture}[line cap=round,line join=round,>=triangle 45,x=0.7cm,y=0.7cm]
\clip(2.,1.) rectangle (8.,6.);
\draw [color=black][line width=0.8pt] (5.,5.)-- (5.,3.);
\draw [color=black][line width=0.8pt] (5.,3.)-- (3.,2.);
\draw [color=black][line width=0.8pt] (5.,3.)-- (7.,2.);
\draw (4.72,2.97) node[anchor=north west] {\begin{scriptsize}$1$\end{scriptsize}};
\draw (2.52,2.22) node[anchor=north west] {\begin{scriptsize}$2$\end{scriptsize}};
\draw (6.9,2.22) node[anchor=north west] {\begin{scriptsize}$3$\end{scriptsize}};
\draw (4.72,5.7) node[anchor=north west] {\begin{scriptsize}$4$\end{scriptsize}};
\begin{scriptsize}
\draw [fill=black] (5.,5.) circle (1.5pt);
\draw [fill=black]  (5.,3.) circle (1.5pt);
\draw [fill=black]  (3.,2.) circle (1.5pt);
\draw [fill=black]  (7.,2.) circle (1.5pt);
\end{scriptsize}
\end{tikzpicture}
\caption{A $1$-chordal $1$-closure which  is not $1$-representable}\label{star}
		\end{center}
		\end{figure}
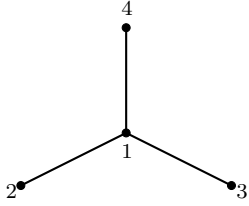

       Also, it is possible for a simplicial complex to not be
       $d$-representable, while its $d$-closure is
       $d$-representable. For example, $\langle [n]\rangle^{[d-1]}$
       which is $d$-closure of all simplicial complexes of dimension
       $<d$ is $d$-representable: Let $A=\{A_1,\ldots, A_n\}$, where
       $A_i$ are affine hyperplanes in $\mathbb{R}^d$. Then any $d$ of
       them intersect in a point, and no $d+1$ of them intersect, as
       the dimension of the intersection reduces by one each time we
       intersect with a new affine hyperplane. Hence $\langle
       [n]\rangle^{[d-1]}=N(A)$.
\end{Remark}

 
 \section{Applications to monomial ideals}
 We now apply the combinatorial results in the previous
   sections to minimal free resolutions of monomial ideals. Let $I$ be
   a square-free monomial ideal in the polynomial ring
   $K[x_1,\ldots,x_n]$ over a field $K$, with Stanley-Reisner complex
   $\Gamma$.  We write $I_{\langle j\rangle}$ for the ideal generated
   by all homogeneous polynomials of degree $j$ belonging to $I$. We
   say that $I$ is {\bf componentwise linear}~\cite{HHCWL} if
   $I_{\langle j\rangle}$ has a linear resolution for all $j$.
   Componentwise linear ideals generalize ideals with linear
   resolution, in the sense that an ideal with linear resolution is
   componentwise linear:  if $I$ is generated in a fixed
   degree $d$ and has linear resolution, then all $I_{\langle k \rangle}$ have
   linear resolutions. This is the perspective we take when
   chordality is being considered; see~\cref{the bounds for checking chordality}.

   If $I$ is a square-free monomial ideal, then by $I_{[j]}$ we mean
   the square-free monomial ideal generated by all the square-free
   monomials of degree $j$ belonging to $I$. The ideal $I$ is called {\bf square-free componentwise linear} if $I_{[j]}$ has a linear resolution for all $j$. Herzog and
   Hibi~\cite{HHCWL} proved that a square-free monomial ideal  is componentwise linear if and
   only if it is square-free componentwise linear.

   For $E \subseteq [n]$, we set $${\bfx}_E=\prod_{i\in
     E}x_i.$$

   The main tool used in  this section is examining, for a free face $E$ of
   $\Gamma$, how adding $\bfx_E$ to the generating set of $I$ affects
   the Betti numbers of $I$. As a consequence, among other things, we
   are able to produce large classes of componentwise linear ideals.

   We begin with some basic observations.

 \begin{Lemma}\label{l:prep-lemma}  Let $I$ be  a square-free monomial ideal
              in $K[x_1,\ldots,x_n]$, $K$ a field, and let
              $\Gamma=\N(I)$.

   \begin{itemize}
	\item[{\rm(a)}] $\N(I_{[d+1]})=\Delta_d(\Gamma)$ for all $d$. 

        \item[{\rm(b)}] If  $E \subseteq [n]$, then 
           $\N(I+({\bfx}_E))=\Gamma\sm {E}.$

        \item[{\rm(c)}] If $E$ is a free face of $\Gamma$, then
              $\N(I+({\bfx}_E))=\Gamma\coll{E}.$ 
   \end{itemize}
 \end{Lemma}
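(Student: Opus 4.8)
The plan is to prove each of the three parts directly from the definitions of the Stanley-Reisner complex and the closure/deletion operations, since all three are essentially bookkeeping statements about which square-free monomials lie in the relevant ideals. I would handle them in the order (b), (a), (c), since (b) is the most elementary and (c) follows from (b) together with the observation made just before the lemma.

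For part (b), I would unwind the definition of the Stanley-Reisner complex. A subset $F\subseteq[n]$ is a face of $\N(I+(\bfx_E))$ exactly when $\bfx_F\notin I+(\bfx_E)$, which happens precisely when $\bfx_F\notin I$ \emph{and} $\bfx_E\nmid\bfx_F$. Since $I$ is square-free, $\bfx_F\notin I$ is equivalent to $F\in\Gamma$, and $\bfx_E\nmid\bfx_F$ is equivalent to $E\not\subseteq F$. Thus $F\in\N(I+(\bfx_E))$ iff $F\in\Gamma$ and $E\not\subseteq F$, which is exactly the definition of $\Gamma\sm E$. The only point requiring a word of care is that $I+(\bfx_E)$ is still a square-free monomial ideal so that taking its Stanley-Reisner complex makes sense; this is immediate since $\bfx_E$ is square-free.

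For part (a), I would again argue through membership of monomials. The ideal $I_{[d+1]}$ is generated by the square-free degree-$(d+1)$ monomials in $I$, equivalently by the $\bfx_G$ with $|G|=d+1$ and $G\notin\Gamma$. A face $F$ lies in $\N(I_{[d+1]})$ iff $\bfx_F$ is divisible by no such generator, i.e.\ iff every $(d+1)$-subset $G\subseteq F$ satisfies $G\in\Gamma$. Comparing this with \cref{d:d-closure}: subsets of size $\leq d$ are automatically faces (they have no $(d+1)$-subset), the $d$-faces of $\N(I_{[d+1]})$ coincide with those of $\Gamma$, and a set of size $>d+1$ is a face iff all its $(d+1)$-subsets are in $\Gamma$. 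This matches the three bullet points defining $\Delta_d(\Gamma)$ exactly, giving the equality. For part (c), when $E$ is a free face of $\Gamma$ the operation $\Gamma\coll{E}$ is \emph{defined} to be $\Gamma\sm E$ (this is recalled in the paragraph preceding \cref{d-collapsing}), so (c) is simply a restatement of (b) under the extra hypothesis that $E$ is free; no further work is needed.

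I do not expect any genuine obstacle here: all three statements are direct translations through the Stanley-Reisner dictionary, and the main thing to be careful about is matching the size conventions in \cref{d:d-closure} (using $\dim F=|F|-1$, so that ``$d$-faces'' are sets of size $d+1$) when verifying part (a). The argument for (a) is the only one with a small amount of content, namely checking the three defining conditions of the $d$-closure against the divisibility condition, and I would present that case split cleanly to avoid off-by-one errors in the cardinalities.
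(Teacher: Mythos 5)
Your proposal is correct and follows essentially the same route as the paper: part (b) by translating membership in $\N(I+(\bfx_E))$ into the two conditions $\bfx_\sigma\notin I$ and $\bfx_E\nmid\bfx_\sigma$, part (a) by checking that divisibility by no degree-$(d+1)$ generator matches the defining conditions of $\Delta_d(\Gamma)$, and part (c) as an immediate consequence of (b). The only difference is the order of presentation, which is immaterial.
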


\begin{proof}

  (a) First note that both $\N(I_{[d+1]})$ and $\Delta_d(\Gamma)$
  contain all possible faces of dimension $< d$. Suppose $F \subseteq
  [n]$ and $|F| \geq d+1$. Then
 \begin{align*}
F \in \N(I_{[d+1]}) & 
\iff \bfx_F \notin I_{[d+1]}\\ 
& \iff \forall G \subseteq F \mbox{ with } |G|=d+1,  \bfx_G \notin I_{[d+1]}\\ 
& \iff \forall G \subseteq F \mbox{ with } |G|=d+1,  G \in \Gamma\\
& \iff F \in \Delta_d(\Gamma).
\end{align*}

  (b) If $\sigma \subseteq [n]$, then $$\sigma \in \N(I+{\bfx}_E) \iff
  {\bfx}_\sigma \notin (I + {\bfx}_E)
  \iff {\bfx}_\sigma \notin I \mbox{ and } {\bfx}_E \nmid {\bfx}_\sigma
  \iff \sigma \in \Gamma\sm {E}.$$

  (c) This statement follows directly from Part~(b).
  \end{proof}

We now turn to the effect of the operation of $d$-collapsing on the
reduced homology modules of a simplicial complex. It is well known
that simplicial collapsing preserves reduced homology modules (see for
example~\cite[Theorem~6.6, Definition~6.13 and
  Proposition~6.14]{K}). In the special case of $d$-collapsing this is
true only for higher reduced homology modules, since we allow facets
as free faces.

We write a proof for this fact, since we could not find one in the
literature, but it is folklore (see also \cite[Proposition~2.3]{BiY}).

\begin{Proposition}\label{p:d-collapsing-hom} 
  If $\Gamma$ is a simplicial complex with a free face $E$,
  then $$\wh_i(\Gamma;K) \cong
  \wh_i(\Gamma\coll{E};K) 
   \ \ \ \ \mbox{ for } \ \ \ \ 
   i > \left \{ 
  \begin{array}{ll}
  \dim E & E \in \facets(\Gamma)\\
  0 & E \notin \facets(\Gamma).
  \end{array}\right.$$
\end{Proposition}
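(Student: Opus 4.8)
The plan is to analyze the effect of the $d$-collapsing $\Gamma \coll{E} = \Gamma \sm E$ on the simplicial chain complex, distinguishing the two cases according to whether or not $E$ is a facet. Recall that $\Gamma \sm E$ removes exactly the faces of $\Gamma$ that contain $E$. Writing $F$ for the unique facet containing the free face $E$, I would first identify the set of removed faces: these are exactly the faces $\sigma$ with $E \subseteq \sigma \subseteq F$. Since $E$ is free, every such $\sigma$ lies in $F$ and contains $E$, so the removed faces are in bijection with subsets of $F \setminus E$ via $\sigma \mapsto \sigma \setminus E$. This is the structure I would exploit.

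My first approach would be the standard relative-homology long exact sequence argument. The pair $(\Gamma, \Gamma \coll{E})$ has relative chain complex $C_\bullet(\Gamma, \Gamma\coll{E})$ supported exactly on the removed faces $\{\sigma : E \subseteq \sigma \subseteq F\}$. The plan is to show this relative complex is acyclic in the relevant range, so that the long exact sequence
\[
\cdots \to \wh_i(\Gamma\coll{E}) \to \wh_i(\Gamma) \to H_i(\Gamma, \Gamma\coll{E}) \to \wh_{i-1}(\Gamma\coll{E}) \to \cdots
\]
forces the isomorphism. The removed faces form (up to the shift by $E$) the full boolean lattice on $F \setminus E$, and its chain complex is the (augmented or unaugmented, depending on the case) simplicial chain complex of a simplex on the vertex set $F \setminus E$, with a degree shift by $|E|$. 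When $E \notin \facets(\Gamma)$, i.e. $F \supsetneq E$, the set $F \setminus E$ is nonempty and the relative complex computes the reduced homology of a simplex (which vanishes everywhere), giving isomorphisms for all $i$, in particular for $i > 0$. When $E = F$ is a facet, the only removed face is $E$ itself, so the relative complex is concentrated in a single homological degree $\dim E$; then $H_i(\Gamma, \Gamma\coll{E}) = 0$ for $i \neq \dim E$, and the long exact sequence gives isomorphisms for $i > \dim E$. This case analysis exactly reproduces the two bounds in the statement.

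The key computational input I expect to be the main obstacle is pinning down the degree shift and the precise homological degrees in which the relative complex is supported, and verifying that the map $\wh_i(\Gamma\coll{E}) \to \wh_i(\Gamma)$ induced by inclusion is the one appearing in the long exact sequence with the correct connecting maps. Concretely, I must check that the relative chain group $C_i(\Gamma, \Gamma\coll{E})$ is spanned by the faces $\sigma$ with $E \subseteq \sigma \subseteq F$ and $\dim \sigma = i$, and that under $\sigma \mapsto \sigma \setminus E$ the relative boundary maps match those of a simplex on $F \setminus E$ shifted up by $|E| = \dim E + 1$. Handling signs/orientations correctly in this identification is the fiddly part, but it is routine. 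An alternative I would keep in reserve is a direct deformation-retraction/homotopy argument: in the non-facet case, deleting $E$ together with all faces between $E$ and $F$ is an elementary simplicial collapse (classical collapse, not merely $d$-collapse) because $E$ is then a free face in the usual sense with a strictly larger unique coface, so $\Gamma$ and $\Gamma\coll{E}$ are homotopy equivalent and all reduced homology agrees; only in the facet case does one genuinely lose control in degree $\dim E$, which is why the bound there is $i > \dim E$ rather than $i > 0$.
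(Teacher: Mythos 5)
Your argument is correct, but it takes a genuinely different (though closely related) route from the paper's. The paper decomposes $\Gamma=\Gamma'\cup\langle F\rangle$ with $\Gamma'=\Gamma\coll{E}$, notes that $\Gamma'\cap\langle F\rangle=\partial(E)*\langle F- E\rangle$, and runs Mayer--Vietoris: when $F- E\neq\emptyset$ the intersection is a cone, hence acyclic, and when $E=F$ the intersection is the boundary sphere $\partial(E)$, whose reduced homology is concentrated in degree $\dim E-1$, which is exactly what produces the threshold $i>\dim E$. You instead use the long exact sequence of the pair $(\Gamma,\Gamma\coll{E})$ and identify the relative chain complex with the interval $[E,F]$ in the face poset, i.e.\ the augmented chain complex of the simplex on $F- E$ shifted up by $|E|$; this is acyclic when $F\supsetneq E$ and concentrated in degree $\dim E$ when $E=F$. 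The two computations are essentially Poincar\'e-dual bookkeeping for the same geometric fact, and both are complete modulo the routine sign-checking you flag. Your version has the small advantage of making transparent that in the non-facet case the isomorphism holds in \emph{all} degrees (as your backup homotopy-equivalence remark also shows, and as the paper itself notes parenthetically), whereas the Mayer--Vietoris route isolates the facet case via the single nonvanishing homology group of a sphere, which some readers may find more geometric. Either proof would serve; there is no gap in yours.
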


\begin{proof}
  This follows from a simple application of the Mayer-Vietoris
  sequence: if $F$ is the unique facet in $\Gamma$ containing $E$,
  then $\langle F\rangle=\langle E\rangle*\langle G\rangle$, where the
  operation $*$ denotes simplicial join and $G=F-E$. Then, setting
  $\Gamma'=\Gamma\coll{E}$, we have:
$$\Gamma' \cup \langle F\rangle=\Gamma \mbox{ and } \Gamma' \cap
\langle F\rangle=\partial(E)*\langle G\rangle,$$ where $\partial(E)$
is the boundary complex of $E$. 
The Mayer-Vietoris sequence
(e.g.~\cite[Theorem~5.17]{K}) gives 
\begin{align}\label{MV}
  \cdots\!\to 
  \wh_i(\partial(E)\!*\!\langle G\rangle;K)\!\to\!
  \wh_i(\Gamma';K)\!\oplus\!\wh_i(\langle F\rangle;K)\!\to\!\wh_i(\Gamma;K)\!\to\!\wh_{i-1}(\partial(E)\!*\!\langle G\rangle;K) \to\!
  \cdots.
\end{align}

Note that $\wh_i(\langle F\rangle;K)=0$ for all $i$. If $G\neq \emptyset$, then $\partial(E)*\langle G\rangle$ is a cone and
hence acyclic, and (\ref{MV}) gives the isomorphism of the homology
modules for all $i > 0$ (this is the better-known case of an elementary
collapse). If $G=\emptyset$ (this is the case when $E$ is a facet of
$\Gamma$), then the same argument gives us the isomorphism of the
homology modules for $i > \dim E$.
\end{proof}

 Our statement about Betti numbers in \cref{betti numbers
    of chordals} is a generalization of \cite[Theorem~2.1]{BYZ}. For
  the proof we will need the following statement form \cite{HHCWL}.

\begin{Lemma}[{\cite[Lemma~1.2]{HHCWL}}]\label{betti numbers of ideals of deg<k}
Let $I\subset S$ be a graded ideal, and for a nonnegative integer $k$
let $I_{\leq k}$ denote the ideal generated by all homogeneous
polynomials of $I$ whose degree is less than or equal to $k$.  Then
for all $k$ and all $j\leq k$ we have
 $$\beta_{i,i+j}(I)=\beta_{i,i+j}(I_{\leq k}).$$
\end{Lemma}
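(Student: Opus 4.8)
The plan is to relate $I$ and $I_{\leq k}$ through the short exact sequence of graded $S$-modules
$$0 \longrightarrow I_{\leq k} \longrightarrow I \longrightarrow I/I_{\leq k} \longrightarrow 0$$
and to control the quotient $I/I_{\leq k}$ by a degree argument. First I would observe that $(I_{\leq k})_m = I_m$ for every $m \leq k$: the inclusion $I_{\leq k}\subseteq I$ gives one direction, while the reverse holds because every homogeneous element of $I$ of degree $\leq k$ is by definition among the generators of $I_{\leq k}$. Consequently $(I/I_{\leq k})_m = 0$ for all $m\leq k$, so the finitely generated graded module $M := I/I_{\leq k}$ is generated in degrees $\geq k+1$.

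The next step is the standard degree bound on the graded Betti numbers of a module generated in high degree. Taking a graded minimal free resolution $F_\bullet \to M$, the generators of $F_0$ lie in degrees $\geq k+1$; since minimality forces the entries of the differentials into the graded maximal ideal, the generators of $F_i$ lie in degrees $\geq k+1+i$. As $\mathrm{Tor}_i^S(K,M) \cong F_i \otimes_S K$, this yields
$$\mathrm{Tor}_i^S(K,M)_a = 0 \quad\text{whenever } a < i + k + 1.$$

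Now I would feed this bound into the long exact sequence in $\mathrm{Tor}_\bullet^S(K,-)$ attached to the short exact sequence above, read in internal degree $i+j$ with $j\leq k$:
$$\mathrm{Tor}_{i+1}^S(K,M)_{i+j} \to \mathrm{Tor}_{i}^S(K,I_{\leq k})_{i+j} \to \mathrm{Tor}_{i}^S(K,I)_{i+j} \to \mathrm{Tor}_{i}^S(K,M)_{i+j}.$$
Since $j\leq k$ we have $i+j \leq i+k < i+k+1$, so the right-hand term vanishes by the bound; and $i+j < i+k+1 < (i+1)+k+1$, so the left-hand term vanishes as well. The middle map is therefore an isomorphism, giving $\beta_{i,i+j}(I_{\leq k}) = \beta_{i,i+j}(I)$ for all $i$ and all $j\leq k$.

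The argument is essentially degree bookkeeping, so I do not expect a serious obstacle; the only delicate point is the \emph{simultaneous} vanishing of the two flanking $\mathrm{Tor}$ terms, and this is precisely why the hypothesis is $j\leq k$ rather than $j\leq k+1$: it is exactly what pushes the internal degree $i+j$ strictly below both thresholds $i+k+1$ and $(i+1)+k+1$.
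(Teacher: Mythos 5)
Your proof is correct: the identification $(I_{\leq k})_m = I_m$ for $m\leq k$, the resulting degree bound $\mathrm{Tor}_i^S(K, I/I_{\leq k})_a = 0$ for $a < i+k+1$, and the long exact sequence in $\mathrm{Tor}$ together give exactly the claimed equality of Betti numbers. The paper does not prove this lemma itself but cites it as Lemma~1.2 of Herzog and Hibi \cite{HHCWL}, and your argument is essentially the standard one given there.
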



\begin{Theorem}[\bf Betti numbers from free faces]\label{betti numbers of chordals}
  Let $I$ be a square-free monomial ideal of $S=K[x_1,\ldots,x_n]$
  where $K$ is a field, $\Gamma=\N(I)$, and  $E \subseteq [n]$ with $|E|=d$.
  \begin{itemize}

  \item[\rm{(a)}] If $E$ is a free face of $\Gamma$,
    then for every $i$
    $$\beta_{i,i+j}(I+({\bfx}_E))=\beta_{i,i+j}(I) \mbox{ for all } j\neq d, d+1.$$

 Moreover, if $E \notin \facets(\Gamma_W)$ for every $W
  \subseteq [n]$ with $|W|=a>2$, then
 $$\beta_{i,i+j}(I+({\bfx}_E))=\beta_{i,i+j}(I)$$ for all $i$ and all
 $j>2$ such that $i+j=a$.

  \item[\rm{(b)}]  If $E$ is a free face of $\Gamma$ or
    of $\Delta_d(\Gamma)$ and
     $$d+1\geq\max \{\deg u \st u \text{ a minimal
    generator of } I\}$$
     and $A\subseteq [n]-E$ such that $E\cup
    \{m\}\in \Gamma$ for all $m\in A$, then for every $i$ we
    have
   \begin{align}\label{main equality}
   \beta_{i,i+j}(I+(x_m{\bfx}_E \st m\in A))= \beta_{i,i+j}(I)
    \mbox{ for all } j\neq d+1.
    \end{align}
   
    Moreover, if $I$ is minimally generated by monomials of degree
    $d+1$, and
    $$E \notin \facets(\left(\N(I+(x_m{\bfx}_E \st m\in
    A))\right)_W)$$ for every $W \subseteq [n]$ with $|W|=a>2$, then
  \begin{align}\label{for j>2}
  \beta_{i,i+j}(I+({\bfx}_E))= \beta_{i,i+j}(I+(x_m{\bfx}_E \st m \in
A))= \beta_{i,i+j}(I)
\end{align}
for all $i$ and all $j>2$ such that $i+j=a$.

  \end{itemize}
\end{Theorem}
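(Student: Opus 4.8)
My plan is to translate everything into reduced simplicial homology via Hochster's formula
$$\beta_{i,i+j}(L)=\sum_{\substack{W\subseteq[n]\\ |W|=i+j}}\dim_K\wh_{j-2}(\N(L)_W;K),$$
valid for every squarefree monomial ideal $L$, and then compare the relevant induced subcomplexes degree by degree. By \cref{l:prep-lemma}(b) and \cref{induced complexes of chordals}(a), $\N(I+(\bfx_E))_W=\Gamma_W\sm E$, and by \cref{induced complexes of chordals}(c) the face $E$ stays free in $\Gamma_W$ whenever $E\subseteq W$ (while $\Gamma_W\sm E=\Gamma_W$ when $E\not\subseteq W$). So Part~(a) reduces to comparing $\wh_{j-2}(\Gamma_W)$ with $\wh_{j-2}(\Gamma_W\coll E)$ for $E$ a free face of $\Gamma_W$ of dimension $d-1$. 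I would split the homological degree $i=j-2$ into three ranges: for $j\le d-1$ the deleted faces all have dimension $\ge d-1$, so the chain groups in degrees $\le d-2$ are untouched and the homology is literally unchanged; for $j\ge d+2$ one has $i=j-2>\dim E$ and $i>0$, so \cref{p:d-collapsing-hom} gives the isomorphism regardless of whether $E$ is a facet of $\Gamma_W$; this leaves only $j\in\{d,d+1\}$, the excluded range. The ``moreover'' then follows because, under $E\notin\facets(\Gamma_W)$ for $|W|=a$, the non-facet branch of \cref{p:d-collapsing-hom} applies in homological degree $j-2>0$, recovering the two borderline degrees for the sets $W$ of size $a=i+j$.

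For Part~(b) the low degrees are immediate: the added generators $x_m\bfx_E=\bfx_{E\cup\{m\}}$ all have degree $d+1$, so $(I+(x_m\bfx_E\st m\in A))_{\le d}=I_{\le d}$, and \cref{betti numbers of ideals of deg<k} forces $\beta_{i,i+j}$ to agree for all $j\le d$. The content is the range $j\ge d+2$. Here I would first use $d+1\ge\max\{\deg u\}$ to reduce to the $d$-closure: then $\Delta_d(\Gamma)=\Gamma\cup\langle[n]\rangle^{[d-1]}$ differs from $\Gamma$ only in faces of dimension $\le d-1$, so passing to the $d$-closure leaves $\wh_i$ unchanged for all $i\ge d$; via \cref{l:prep-lemma}(a) and \cref{induced complexes of chordals}(b) this identifies the relevant Betti numbers of $I$ and of $J:=I+(x_m\bfx_E\st m\in A)$ with those of $I_{[d+1]}$ and $J_{[d+1]}$, whose complexes are $\Sigma:=\Delta_d(\Gamma)$ and $\Sigma':=\Delta_d(\N(J))$. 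On the $d$-closure $\Sigma$ the face $E$ is free (this is where the two alternative hypotheses on $E$ merge), $\Sigma'=\Sigma\sm\{E\cup\{m\}\st m\in A\}$, and I would remove the $d$-faces $E\cup\{m\}$ one at a time: each is a free face of the current complex, its unique facet being contained in the unique facet $F$ through $E$, so each removal is an elementary $(d+1)$-collapse.

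Running \cref{p:d-collapsing-hom} along this sequence preserves $\wh_i$ for $i\ge d+1$ outright; the only dangerous step is collapsing some $E\cup\{m\}$ that has become a facet, which could a priori change $\wh_d$, i.e. the degree $j=d+2$. The observation that rescues this borderline degree is that such an $E\cup\{m\}$ is then the \emph{unique} $d$-face through the free face $E$, whence the coefficient of $E$ in the boundary of any $d$-cycle forces the coefficient of $E\cup\{m\}$ to vanish; thus $E\cup\{m\}$ lies in no $d$-cycle and its removal leaves $\ker\partial_d/\mathrm{im}\,\partial_{d+1}$ intact. Hence only $\wh_{d-1}$, i.e. $j=d+1$, can change, giving the first equality of Part~(b). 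The ``moreover'' of Part~(b) then combines three inputs: the first equality gives $\beta(J)=\beta(I)$ for $j\ne d+1$; for $j=d+1$ the hypothesis $E\notin\facets(\N(J)_W)$ means some $d$-face through $E$ survives, so no collapse in the sequence is ever a facet collapse and the non-facet branch of \cref{p:d-collapsing-hom} preserves $\wh_{d-1}$ too (here $j>2$ ensures $d-1>0$); and since $\N(J)_W\subseteq\Gamma_W$ the same hypothesis yields $E\notin\facets(\Gamma_W)$, so the ``moreover'' of Part~(a) gives $\beta(I+(\bfx_E))=\beta(I)$ in the same degrees.

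The main obstacle throughout is the facet-versus-non-facet dichotomy in \cref{p:d-collapsing-hom}: the isomorphism of reduced homology only holds above $\dim E$ in the facet case, so the whole argument hinges on controlling the two borderline degrees $j=d+1,d+2$. The decisive step is the cycle computation above, which upgrades the facet case from ``$i>\dim E$'' to ``$i\ge\dim E$'' precisely because every removed top face passes through the fixed free face $E$.
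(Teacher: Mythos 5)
Your proof is correct, but Part~(b) takes a genuinely different route from the paper's. Part~(a) is essentially the paper's argument (Hochster's formula plus \cref{p:d-collapsing-hom} applied to each $\Gamma_W$ with $E\subseteq W$); the only variation is that for $j\le d-1$ you argue at the chain level, where the paper instead invokes \cref{betti numbers of ideals of deg<k} via $I_{\le d-1}=(I+(\bfx_E))_{\le d-1}$. In Part~(b) the paper introduces the auxiliary complex $\Sigma=\Gamma\sm F_1\sm\cdots\sm F_r$ (with $F_k=E\cup\{m_k\}$), notes that $E$ is a free face of $\Sigma$ and that $\Sigma\coll{E}=\Gamma\coll{E}$, and then chains Part~(a) applied to $I$ and to $\N(\Sigma)=\N(I+(x_m\bfx_E\st m\in A))$ through the common ideal $I+(\bfx_E)$; since only the range $j>d+1$ of Part~(a) is needed there, the borderline homological degree is never confronted. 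You instead delete the faces $E\cup\{m\}$ one at a time as elementary $(d+1)$-collapses, which forces you to control $\wh_d$ (the degree $j=d+2$) when a deleted face has become a facet of the current complex; your cycle computation --- such a face is then the unique $d$-face through the still-free face $E$, hence carries coefficient zero in every $d$-cycle, and its removal deletes no $(d+1)$-face, so both $Z_d$ and $B_d$ survive --- is exactly the extra input needed to upgrade the facet branch of \cref{p:d-collapsing-hom} from $i>\dim E'$ to $i\ge\dim E'$ in this situation, and it checks out (one should verify, as you implicitly do, that for a fixed $W$ a facet collapse can occur only at the last step, and that the unique facet of the current complex containing $E$ is then that very $d$-face). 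Your reduction of the general case to the equigenerated one, via the observation that $\Delta_d(\Gamma)=\Gamma\cup\langle[n]\rangle^{[d-1]}$ adds only faces of dimension $\le d-1$ and hence leaves $\wh_i$ unchanged for $i\ge d$, re-proves the statement the paper imports from \cite[Lemma~4.2]{BiY}, so your argument is more self-contained; what the paper's detour through $I+(\bfx_E)$ buys is that all the delicate homology bookkeeping is concentrated once and for all in Part~(a). The two ``moreover'' clauses are handled correctly on both routes.
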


\begin{proof}
  (a) By Hochster's formula~\cite{Hochster}(See also \cite[Theorem~8.1.1]{HHBook}) and
  \cref{l:prep-lemma}(c), for all $i$ and $j$
\begin{align*}
\beta_{i,i+j}(I+({\bfx}_E))
&= \sum_{W\subseteq [n]\atop |W|=i+j}\dim_K \wh_{j-2}(\N(I+({\bfx}_E))_W;K)\\
&= \sum_{W\subseteq [n]\atop |W|=i+j}\dim_K \wh_{j-2}((\Gamma \coll{E})_W;K).
\end{align*}

If $E\not\subseteq W$, then by abuse of notation
$\Gamma_W\coll{E}=\Gamma_W$.  If $E\subseteq W$, then by
\cref{induced complexes of chordals} Parts~(c) and (a), $E$ is a free face of $\Gamma_W$,
and $(\Gamma\coll{E})_W=(\Gamma_W)\coll{E}$. By
\cref{p:d-collapsing-hom} 
\begin{align}\label{for-moreover}
\beta_{i,i+j}(I+({\bfx}_E))
=\sum_{W\subseteq [n]\atop |W|=i+j}\dim_K \wh_{j-2}(\Gamma_W;K)
=\beta_{i,i+j}(I) 
\end{align}
for $j>d+1$.

Since $\deg {\bf x}_E=d$ we have $I_{\leq d-1}=(I+({\bfx}_E))_{\leq d-1}$. Hence  by \cref{betti numbers of ideals of deg<k} 
\begin{align*}
\beta_{i,i+j}(I)=\beta_{i,i+j}(I_{\leq d-1})=\beta_{i,i+j}((I+({\bfx}_E))_{\leq d-1})=\beta_{i,i+j}(I+({\bfx}_E))
\end{align*}
for all $i$ and all $j\leq d-1$. 

 Moreover, if for every $W$ with $|W|=a>2$ we have $E
  \notin \facets(\Gamma_W)$, then using \cref{for-moreover} and by
  \cref{p:d-collapsing-hom}
 $$\beta_{i,i+j}(I+({\bfx}_E))=\beta_{i,i+j}(I)$$ for all $i$ and all
 $j>2$ such that $i+j=a$.

\medspace
\noindent (b) We first deal with the case where  all generators of
$I$ have degree $d+1$. Let $$\Gamma=\Delta_d(\Gamma)=\langle
G_1,\ldots,G_t\rangle$$ where $G_t$ is the unique facet of $\Gamma$
containing $E$, and let
$$\Sigma=\Gamma\sm F_1\sm \cdots\sm F_{r},$$
 where $F_k=E\cup\{m_k\}$ for each $m_k\in A$. 
 Now
$$ \Sigma=\langle G_1,\ldots,G_{t-1}\rangle\cup \langle G_t- \{i\} 
: \ i\in E\rangle\cup \langle G_t-A\rangle. $$

It follows that $E$ is uniquely contained in $G_t- A$, and hence it is a
free face of $\Sigma$.  Applying Part~(a) to $\Sigma$,  we see that
\begin{align}\label{sigma1}
\beta_{i,i+j}(\N(\Sigma)) = 
\beta_{i,i+j}(\N(\Sigma)+(\mathbf{x}_E))  
\mbox{ for all } i \mbox{ and all } 
     j> d+1.
\end{align}

On the other hand, $\Sigma\coll{E}=\Gamma\coll{E}$. Therefore, 

\begin{align}\label{sigma2}
\N(\Sigma)+(\mathbf{x}_E)=\N(\Sigma\coll{E})=\N(\Gamma\coll{E})=I+(\mathbf{x}_E).
\end{align}

This implies that for all $i$ and all 
$j> d+1$,

\begin{align*}
\beta_{i,i+j}(I)&=\beta_{i,i+j}(I+(\mathbf{x}_E))&\text{(using Part (a))}\\
&=\beta_{i,i+j}(\N(\Sigma)+(\mathbf{x}_E))&\text{(using \cref{sigma2})}\\
&=\beta_{i,i+j}(\N(\Sigma))&\text{(using \cref{sigma1})}\\
&=\beta_{i,i+j}(I+(x_m{\bfx}_E: m\in A))&\text{(using \cref{l:prep-lemma}(b))}.\\
\end{align*}

 Since the ideals $I$ and $I+(x_m{\bfx}_E \st m\in A)$ are
  minimally generated in degree $d+1$ they both have Betti numbers
  equal to $0$ when $j\leq d$.

  For $W \subseteq [n]$ if we have $E \notin
\facets(\Sigma_W)$, then $E \notin \facets(\Gamma_W)$. Hence if for every $W$ with $|W|=a>2$ we have $E\notin \facets(\Gamma_W)$, then 
 $$\beta_{i,i+j}(I+({\bfx}_E))=
    \beta_{i,i+j}(I+(x_m{\bfx}_E \st m\in A))=
    \beta_{i,i+j}(I)$$
for all $i$ and all
 $j>2$ such that $i+j=a$.

    This settles the equigenerated case. Now suppose all generators of
    $I$ have degree $\leq d+1$. By \cref{l:prep-lemma}(a),
    $\Delta_d(\Gamma)=\N(I_{[d+1]})$.  Observe that if $E$ is free in
    $\Gamma$, then it is also a free face of $\Delta_d(\Gamma)$,
    otherwise it would be contained in at least two facets of $\Gamma$
    which would contradict it being free.  By our discussions above
\begin{align}\label{betti of component}
\beta_{i,i+j}(I_{[d+1]})=\beta_{i,i+j}(I_{[d+1]}+(\mathbf{x}_E))=\beta_{i,i+j}(I_{[d+1]}+(x_m\mathbf{x}_E \st m\in A)),
\end{align}
 for all $i$ and all 
 $j\neq d,d+1$.

Set
$t=\max\{\deg u \st u \text{ minimal generator of } I\}$. It is
 proved in~\cite[Lemma~4.2]{BiY} that if $d+1\geq t$, then
for all $i$ and all $j>d+1$
\begin{align}\label{betti of component=betti of ideal}
\beta_{i,i+j}(I)=\beta_{i,i+j}(I_{[d+1]}).
\end{align}
 It follows from \cref{betti of component} and \cref{betti of component=betti of ideal} that for all $i$ and all $j>d+1$
\begin{align}\label{betti of extension=betti of ideal}
\beta_{i,i+j}(I)=\beta_{i,i+j}(I_{[d+1]}+(x_m\mathbf{x}_E \st m\in [n]-E)).
\end{align}

Let $J:=I+(\mathbf{x}_E)$. Then
$$J_{[d+1]}=I_{[d+1]}+(x_m\mathbf{x}_E \st m\in [n]-E).$$ So by \cref{betti of extension=betti of ideal} for all $i$
and all $j>d+1$
\begin{align}\label{betti of I=betti of J's component}
\beta_{i,i+j}(I)=\beta_{i,i+j}(J_{[d+1]}).
\end{align}
Now set $s=\max\{\deg u \st u \text{ minimal generator of } J\}$.  Since $\deg {\bf x}_E=d$ we have $t\geq s$ and hence 
$d+1\geq s$. Again \cite[Lemma~4.2]{BiY} implies that for all $i$ and
all $j>d+1$
\begin{align}\label{betti of J=betti of J's component}
\beta_{i,i+j}(J)=\beta_{i,i+j}(J_{[d+1]}).
\end{align}

 \Cref{betti of I=betti of J's component} and \cref{betti of J=betti of J's component} yield the following result
\begin{align}\label{number 1}
  \beta_{i,i+j}(I)=\beta_{i,i+j}(I+(\mathbf{x}_E)) \quad\quad\text{for
    all $i$ and all $j>d+1$}.
\end{align}

 By \cref{betti numbers of ideals of deg<k},
\cref{number 1} also holds for $j\leq d-1$.

Now let $L=I+(x_m\mathbf{x}_E \st m\in A)$ for $A\subseteq [n]-E$. Then
$$L_{[d+1]}=I_{[d+1]}+(x_m\mathbf{x}_E \st m\in A).$$ Using \cref{betti of component}
and \cref{betti of component=betti of ideal} one has
\begin{align}\label{number 2}
\beta_{i,i+j}(L_{[d+1]})=\beta_{i,i+j}(I) \quad\quad\text{for all $i$ and all $j>d+1$}.
\end{align}
Since $d+1\geq \max\{\deg u \st u \text{ minimal generator of } L\}$, by \cite[Lemma~4.2]{BiY}  we have 
\begin{align}\label{number 3}
\beta_{i,i+j}(L)=\beta_{i,i+j}(L_{[d+1]}) \quad\quad\text{for all $i$ and all $j>d+1$}.
\end{align}
Consequently, using \cref{number 2} and \cref{number 3}
\begin{align}\label{number 4}
\beta_{i,i+j}(I+(x_m\mathbf{x}_E \st m\in A))=\beta_{i,i+j}(I),\quad\quad\text{for all $i$ and all $j>d+1$}.
\end{align}
  
  Now $$I_{\leq d}=(I+(x_m\bfx_E \st m\in A))_{\leq d} 
\ \ \ \ \mbox{ and }\ \ \ \  
I_{\leq d-1}=(I+(\bfx_E))_{\leq d-1}$$ so our assertions follows from \cref{betti numbers of ideals of deg<k}.
\end{proof}


  Recall that for a graded ideal $I$ of the polynomial
    ring $S$ the {\bf regularity} of $I$ 
    the maximum of all $j$ such that $\beta_{i,i+j}(I) \neq 0$.

  \begin{Corollary}[{\bf Adding generators to componentwise linear
        ideals}]\label{CWL+free:CWL} Let $I$ be a square-free monomial
    ideal in $K[x_1,\ldots,x_n]$, and suppose the degree of each
    minimal monomial generator of $I$ is $\leq d+1$. Let $E$ be a
    $(d-1)$-dimensional free face of $\Gamma:=\mathcal{N}(I)$
     or $\Delta_d(\Gamma)$, and $A\subseteq [n]-E$,  $A\neq \emptyset$,
    with $E\cup \{m\} \in \Gamma$ for each $m\in A$. If
    $I$ is componentwise linear, then $$I+(x_m\bfx_E: m\in A)$$ is
    componentwise linear of regularity $d+1.$
\end{Corollary}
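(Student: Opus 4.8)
The engine of the proof is \cref{betti numbers of chordals}(b): its hypotheses ($E$ a free face of $\Gamma$ or of $\Delta_d(\Gamma)$, the bound $d+1\geq\max\{\deg u : u \text{ a minimal generator of } I\}$, and $A\subseteq [n]-E$ with $E\cup\{m\}\in\Gamma$) are exactly those assumed here. Writing $L=I+(x_m\bfx_E : m\in A)$, it delivers at once
\[
\beta_{i,i+j}(L)=\beta_{i,i+j}(I)\qquad\text{for all } i \text{ and all } j\neq d+1,
\]
so the Betti tables of $L$ and $I$ agree outside the single row $j=d+1$. The plan is to leverage this to transfer componentwise linearity from $I$ to $L$ and to read off the regularity.

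First I would pin down the regularity. Since each $E\cup\{m\}$ is a face of $\Gamma=\N(I)$, the monomial $x_m\bfx_E$ does not lie in $I$, so it is a genuine minimal generator of $L$ of degree $d+1$; thus $\beta_{0,d+1}(L)\neq 0$ and $\mathrm{reg}(L)\geq d+1$. For the reverse inequality, $I$ is componentwise linear with generators in degree $\leq d+1$, so $I_{[d+1]}$ has a linear resolution and hence $\beta_{i,i+j}(I_{[d+1]})=0$ for $j>d+1$; by \cite[Lemma~4.2]{BiY} the rows above $d+1$ of $I$ and of $I_{[d+1]}$ coincide, giving $\beta_{i,i+j}(I)=0$ for $j>d+1$. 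By the displayed equality these rows of $L$ vanish as well, so $\mathrm{reg}(L)\leq d+1$, whence $\mathrm{reg}(L)=d+1$.

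For componentwise linearity I would use the Herzog--Hibi criterion~\cite{HHCWL} (quoted in the excerpt) that a square-free monomial ideal is componentwise linear iff each square-free component $L_{[k]}$ has a linear resolution, and verify this component by component. When $k\leq d$, the new generators (all of degree $d+1$) cannot divide a square-free monomial of degree $k$, so $L_{[k]}=I_{[k]}$, which has a linear resolution because $I$ is componentwise linear. When $k\geq d+1$, the ideal $L_{[k]}$ is generated in a single degree $k$, so it has a linear resolution precisely when $\beta_{i,i+j}(L_{[k]})=0$ for $j>k$; since $k\geq d+1$ is at least the top generator degree of $L$, \cite[Lemma~4.2]{BiY} gives $\beta_{i,i+j}(L_{[k]})=\beta_{i,i+j}(L)$ for $j>k$, and these vanish because $j>k\geq d+1$ forces $j>d+1$, where the rows of $L$ were already shown to be zero.

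The main obstacle is the high-degree range $k\geq d+1$: one must check that $L_{[k]}$ really is generated in the single degree $k$ and that the comparison of its Betti numbers with those of $L$ via \cite[Lemma~4.2]{BiY} applies, i.e. that the largest generator degree of $L$ is still $d+1$ (which holds since the only new generators have degree $d+1$). Once the single vanishing statement $\beta_{i,i+j}(L)=0$ for $j>d+1$ is in hand, both the regularity claim and all component-wise checks follow uniformly, so the whole argument hinges on that one row bound extracted from \cref{betti numbers of chordals}(b).
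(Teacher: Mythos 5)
Your proposal is correct, and while it rests on the same engine as the paper --- \cref{betti numbers of chordals}(b) together with the componentwise reduction $L_{[k]}=I_{[k]}$ for $k\leq d$ --- it handles the components in degree $k\geq d+1$ by a genuinely different route. The paper applies \cref{betti numbers of chordals}(b) to the equigenerated ideal $I_{[d+1]}$ to conclude that $L_{[d+1]}=I_{[d+1]}+(x_m\bfx_E: m\in A)$ has a $(d+1)$-linear resolution, and then climbs up to $L_{[k]}$ for $k>d+1$ through the identity $L_{[k]}=\bigl(\mathfrak{m}^{k-d-1}L_{[d+1]}\bigr)^{sq}$, invoking \cite[Lemma~8.2.10]{HHBook} (multiplication by $\mathfrak{m}$ preserves linearity) and \cite[Proposition~8.2.17]{HHBook} (the square-free part preserves linearity); the regularity is then read off at the end from \cite[Corollary~8.2.14]{HHBook}. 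You instead apply \cref{betti numbers of chordals}(b) to $I$ itself, establish $\mathrm{reg}(L)\leq d+1$ up front by comparing with $\mathrm{reg}(I)\leq d+1$, and then descend from $L$ to each $L_{[k]}$ via \cite[Lemma~4.2]{BiY}, using that $L_{[k]}$ is equigenerated in degree $k\geq d+1=\max\deg$ of the generators of $L$. This avoids the $\mathfrak{m}$-power and square-free-operator machinery entirely; the only mild caveat is that you use \cite[Lemma~4.2]{BiY} with a general degree parameter $k$ rather than $d+1$, which is fine since that lemma is degree-generic, and your lower bound $\mathrm{reg}(L)\geq d+1$ correctly exploits $A\neq\emptyset$ to produce a genuine minimal generator $x_m\bfx_E$ of degree $d+1$ (it is minimal precisely because $E\cup\{m\}\in\Gamma$ forces $x_m\bfx_E\notin I$). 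Both arguments are sound; yours is arguably the more streamlined, the paper's the more self-contained in its reliance on standard facts from \cite{HHBook}.
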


\begin{proof}

  We show that $(I+(x_m\bfx_E: m\in A))_{[k]}$ has $k$-linear
  resolution for all $k$.

If $k<d+1$ we have $$(I+(x_m\bfx_E: m\in A))_{[k]}=I_{[k]},$$ and since
the latter has linear resolution, we are done.  Suppose $k\geq d+1$.  Then
\begin{align}\label{J}
(I+(x_m\bfx_E: m\in
  A))_{[k]}=\left(\mathfrak{m}^{k-d-1}(I_{[d+1]}+(x_m\bfx_E: m\in
  A))\right)^{sq},
\end{align}
where $\mathfrak{m}^{k-d-1}$ denotes $(k-d-1)$-st power of the graded
maximal ideal $\mathfrak{m}$ of $S$ and by $J^{sq}$ we mean the ideal
generated by square-free generators of $J$.

 By \cref{betti numbers of chordals}(b)  for all $i$ and all $j\neq d+1$
 $$\beta_{i,i+j}(I_{[d+1]}+(x_m\bfx_E: m\in
 A))=\beta_{i,i+j}(I_{[d+1]})=0.$$ Therefore $I_{[d+1]}+(x_m\bfx_E:
 m\in A)$ has a $(d+1)$-linear resolution. It follows
 from~\cite[Lemma~8.2.10]{HHBook}
 that $$J=\mathfrak{m}^{k-d-1}\left(I_{[d+1]}+(x_m\bfx_E: m\in
 A)\right)$$ has a $k$-linear resolution. Therefore, the square-free
 component $J^{sq}$ of $J$ has $k$-linear
 resolution~\cite[Proposition~8.2.17]{HHBook}, and so by \cref{J}
 $(I+(x_m\bfx_E: m\in A))_{[k]}$ has a $k$-linear resolution as
 desired.

By~\cite[Corollary~8.2.14]{HHBook} the regularity of the componentwise
linear ideal $I+(x_m\bfx_E: m\in A)$ is the highest degree of its
minimal generators, which in this case is equal to $d+1$.
\end{proof}

\begin{Theorem}[\bf Chordal complexes produce componentwise linear ideals]\label{main1}
  Let $I$ be a nonzero square-free monomial ideal,  $d$ a positive integer and let
  $\Gamma=\N(I)$. Then,  over all fields, we have 
	\begin{itemize}

	\item[{\rm(a)}] If $\Gamma$ is $d$-chordal, then
          $I_{[d+1]}=\N(\Delta_d(\Gamma))$ has a $(d+1)$-linear
          resolution (\cite[Theorem~3.3]{BYZ}).

        \item[\rm{(b)}] If $\Gamma$ is $d$-chordal and $W
          \subseteq[n]$, then $\N(\Gamma_W)_{[d+1]}$ has a
          $(d+1)$-linear resolution.

        \item[{\rm(c)}] If $\Gamma$ is $d$-collapsible, then
              $I_{[d+1]}=\N(\Delta_d(\Gamma))$ has $(d+1)$-linear
              resolution.

          \item[{\rm(d)}] If $\Gamma$ is $d$-representable, then
            $I_{[d+1]}=\N(\Delta_d(\Gamma))$ has $(d+1)$-linear
            resolution.

	 \item[{\rm(e)}]  If $\Gamma$ is chordal, then $I$ is
          componentwise linear.

        \item[\rm{(f)}] If $\Gamma$ is $d$-chordal for all $t-1\leq d
          \leq s-1$ where $t$ and $s$ are, respectively, the smallest
          and the largest degrees of the minimal monomial generators
          of $I$, then $I$ is componentwise linear.

          \item[{\rm(g)}] If $\Gamma$ is $d$-collapsible and $\deg u>
            d$ for all $u\in I$, then $I$ is componentwise linear.

          \item[{\rm(h)}] If $\Gamma$ is $d$-representable and $\deg
            u> d$ for all $u\in I$, then $I$ is componentwise linear.

          \item[\rm{(i)}] If $\Gamma$ is chordal and $W \subseteq[n]$,
            then $\N(\Gamma_W)$ is componentwise linear.
        \end{itemize}
	\end{Theorem}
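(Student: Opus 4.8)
The plan is to prove part (a) in full and then harvest parts (b)--(i) from the combinatorial results of the preceding sections together with the Herzog--Hibi equivalence \cite{HHCWL} between componentwise linearity and squarefree componentwise linearity. For (a), I would first dispose of the degenerate branch of the definition of $d$-chordality: if $\Delta_d(\Gamma)=\langle[n]\rangle^{[d-1]}$, then by \cref{l:prep-lemma}(a) the ideal $I_{[d+1]}=\N(\Delta_d(\Gamma))$ is generated by all squarefree monomials of degree $d+1$, which is well known to have a $(d+1)$-linear resolution; this is the base case. Otherwise $\Delta_d(\Gamma)$ admits a simplicial order $E_1,\ldots,E_t$, and I would track the chain of simplicial deletions $\Sigma_0=\Delta_d(\Gamma)$, $\Sigma_k=\Sigma_{k-1}\sr{E_k}$, terminating at $\Sigma_t=\langle[n]\rangle^{[d-1]}$.

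The key point is that each $\Sigma_k$ is again a $d$-closure, so its Stanley--Reisner ideal is minimally generated in degree exactly $d+1$, and each $E_k$ is a free $(d-1)$-face of $\Sigma_{k-1}$ contained in a unique facet $F_k$ with $\dim F_k\geq d$. Writing $A_k=F_k\sm E_k$, one checks that $E_k\cup\{m\}\in\Sigma_{k-1}$ for every $m\in A_k$ and that $\N(\Sigma_k)=\N(\Sigma_{k-1})+(x_m\bfx_{E_k}\st m\in A_k)$, so all hypotheses of \cref{betti numbers of chordals}(b) are met at each step. That result then yields $\beta_{i,i+j}(\N(\Sigma_k))=\beta_{i,i+j}(\N(\Sigma_{k-1}))$ for every $j\neq d+1$. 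Chaining these equalities along the simplicial order gives $\beta_{i,i+j}(I_{[d+1]})=\beta_{i,i+j}(\N(\Sigma_0))=\beta_{i,i+j}(\N(\Sigma_t))=0$ for all $j\neq d+1$, the final vanishing coming from the base case. Since these Betti numbers are computed via Hochster's formula, the $(d+1)$-linear resolution is obtained over any field.

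With (a) established, the next three parts are immediate. For (b), \cref{induced complexes of chordals}(e) shows $\Gamma_W$ is $d$-chordal, so (a) applies to $\N(\Gamma_W)$. For (c), $d$-collapsibility implies $d$-chordality (\cref{d-coll=>t-chordal}(a) with $t=d$), and for (d), $d$-representability implies $d$-chordality (\cref{representable}); in both cases (a) finishes the argument. For (e), chordality makes $\Gamma$ $(k-1)$-chordal for every $k\geq 1$, so by (a) the ideal $I_{[k]}=\N(\Delta_{k-1}(\Gamma))$ has a $k$-linear resolution for all $k$; thus $I$ is squarefree componentwise linear, hence componentwise linear by \cite{HHCWL}.

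The remaining parts reduce to (e). For (f), translating degrees of minimal generators into dimensions of minimal nonfaces (the degree of $\bfx_F$ equals $|F|=\dim F+1$) shows that the hypothesis is exactly condition (ii) of \cref{the bounds for checking chordality} on the interval $[t-1,s-1]$, which contains the required subinterval; hence $\Gamma$ is chordal and (e) applies. For (g), the condition $\deg u>d$ for all $u\in I$ says every nonface $F$ satisfies $\dim F\geq d$, so \cref{d-collapsible for suitable d} gives chordality and (e) applies; part (h) follows since $d$-representable complexes are $d$-collapsible (Wegner~\cite{We}), reducing it to (g). Finally (i) follows from (e) and the fact that $\Gamma_W$ is chordal by \cref{induced complexes of chordals}(f). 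The main obstacle is entirely concentrated in part (a): one must verify that the simplicial-order chain stays inside the class of $d$-closures with degree-$(d+1)$-generated ideals so that \cref{betti numbers of chordals}(b) is legitimately applicable at \emph{every} step, and one must correctly supply the base-case linear resolution; once these are secured, everything else is bookkeeping.
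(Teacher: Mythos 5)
Your proposal is correct and follows the paper's proof essentially verbatim: part (a) is the engine, obtained by marching along a simplicial order of $\Delta_d(\Gamma)$ and invoking \cref{betti numbers of chordals} at each step to leave all Betti numbers in degrees $j\neq d+1$ unchanged until one reaches the ideal of $\langle [n]\rangle^{[d-1]}$, and parts (b)--(i) are harvested from exactly the same supporting results (\cref{induced complexes of chordals}, \cref{d-collapsible=chordal} together with \cref{d-coll=>t-chordal}, \cref{representable}, \cref{the bounds for checking chordality}, \cref{d-collapsible for suitable d}, and the Herzog--Hibi squarefree componentwise criterion). The only cosmetic difference is that in (a) you adjoin the degree-$(d+1)$ generators $x_m\bfx_{E_k}$ via \cref{betti numbers of chordals}(b), i.e.\ you track simplicial deletions and stay inside the class of ideals equigenerated in degree $d+1$, whereas the paper adjoins the degree-$d$ monomials $\bfx_{E_k}$ via \cref{betti numbers of chordals}(a), i.e.\ it tracks collapsings; both chains terminate at the skeleton ideal and both are legitimate.
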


\begin{proof}
	\begin{itemize}

\item[(a)] By \cref{l:prep-lemma}(a)
  $\N(I_{[d+1]})=\Delta_{d}(\Gamma)$. Since $\Gamma$ is $d$-chordal,
  $\Delta_d(\Gamma)$ admits a simplicial order $\mathbf{E}=E_1,\ldots,
  E_t$. It follows from \cref{betti numbers of chordals}(a) that for
  all $i$ and all $j>d+1$
$$\beta_{i,i+j}(I_{[d+1]})=\beta_{i,i+j}(I+(\bfx_{E_1},\ldots, \bfx_{E_t}))=\beta_{i,i+j}(\N(\langle
[n]\rangle^{[d-1]}))$$ The ideal $\N(\langle [n]\rangle^{[d-1]})$,
generated by all square-free monomials of degree $d+1$ in $S$, has
$(d+1)$-linear resolution over all fields (Herzog and
Hibi~\cite{HHPoly}).  Hence $\beta_{i,i+j}(I_{[d+1]})=0$ for all $i$
and all $j>d+1$.  Since $I_{[d+1]}$ is generated by monomials of
degree $d+1$, for each $i$, the $i$th syzygies are of degree $\geq
i+d+1$, and so $\beta_{i,i+j}(I_{[d+1]})=0$ for all $i$ and all
$j<d+1$.  Therefore $I_{[d+1]}$ has $(d+1)$-linear resolution over all
fields.

\item[(b)] Follows from Part~(a) and \cref{induced complexes of chordals}(e).

\item[(c)] Follows from Part~(a), \cref{d-collapsible=chordal}, and
\cref{d-collapsible skeleton}.

\item[(d)] Follows from Part~(a) and \cref{representable}.

\item[(e)] By assumption $\Gamma$ is $d$-chordal for all $d\geq
1$. Hence $I_{[d+1]}$ has $(d+1)$-linear resolution over all fields
using Part~(a). Since by \cite[Proposition~8.2.17]{HHBook} a square-free
monomial ideal $I$ is componentwise linear if and only if $I$ is
square-free componentwise linear, our assertion follows.

\item[(f)] Follows from Part~(e) and \cref{the bounds for checking chordality}.

\item[(g)] Follows from Part~(e) and  \cref{d-collapsible for suitable d}.

\item[(h)] Follows from Part~(g), and the fact that $d$-representable complexes are $d$-collapsible \cite{We}.

\item[(i)] Follows from Part~(e) and \cref{induced complexes of chordals}(f).

\end{itemize}
\end{proof}

Note that one can prove \cref{main1}(a) independently: Since
$\Gamma$ is $d$-chordal, $\Delta_d(\Gamma)$ is $d$-collapsible using
\cref{d-collapsible=chordal}. It is shown in \cite[Lemma~3]{We}
that any $d$-collapsible complex is $d$-Leray. Hence
$\wh_j(\Delta_d(\Gamma)_W;K)=0$ for all $j\geq d$. This
yields the desired conclusion.


The following example, which was suggested by Eric Babson in a
communication with Ali Akbar Yazdan Pour~\cite{BY}, shows that the
converses of none of the parts of \cref{main1} holds.

 \begin{Example}
   Let $\Gamma$ be a triangulation of a Dunce hat, see
   \cref{duncehat}, and let $\Sigma:=\Delta_2(\Gamma)$ be its
   $2$-closure. Then it is seen that $\Sigma$ is not $2$-collapsible,
   and hence it is not $2$-chordal or $2$-representable, while
   $\N(\Sigma)$ has $3$-linear resolution over all fields.

 \begin{figure}[ht!]
  \begin{center}
\begin{tikzpicture}[line cap=round,line join=round,>=triangle 45, scale=0.7]
\definecolor{pinky}{rgb}{0.6,0.2,0.}
\fill[color=black,fill=black,fill opacity=0.1] (8.,6.) -- (3.,-2.) -- (13.,-2.) -- cycle;
\draw (8.,6.)-- (3.,-2.);
\draw (3.,-2.)-- (13.,-2.);
\draw (13.,-2.)-- (8.,6.);
\draw (8.,2.)-- (7.,1.);
\draw (7.,1.)-- (7.46,0.);
\draw (7.46,0.)-- (8.68,0.);
\draw (8.68,0.)-- (9.,1.);
\draw (9.,1.)-- (8.,2.);
\draw (8.,2.)-- (8.,6.);
\draw (7.,1.)-- (8.,6.);
\draw (7.,1.)-- (6.09,2.9);
\draw (7.,1.)-- (4.8,0.97);
\draw (7.46,0.)-- (3.,-2.);
\draw (4.85,0.97)-- (7.46,0.);
\draw (7.46,0.)-- (6.48,-2.);
\draw (8.68,0.)-- (6.48,-2.);
\draw (8.,2.)-- (7.46,0.);
\draw (8.,2.)-- (8.68,0.);
\draw (8.68,0.)-- (10.,-2.);
\draw (8.68,0.)-- (13.,-2.);
\draw (9.,1.)-- (13.,-2.);
\draw (9.,1.)-- (11.1,0.98);
\draw (9.,1.)-- (9.87,3.0);
\draw (8.,2.)-- (9.87,3.0);
\begin{scriptsize}
\draw [fill](8.,6.) circle (1.5pt);
\draw (8.0,6.3) node {$1$};
\draw [fill] (3.,-2.) circle (1.5pt);
\draw (3.0,-2.35) node {$1$};
\draw [fill] (13.,-2.) circle (1.5pt);
\draw (13.0,-2.35) node {$1$};
\draw [fill] (6.08,2.9) circle (1.5pt);
\draw (5.75,3.0) node {$3$};
\draw [fill] (4.85,0.97) circle (1.5pt);
\draw(4.55,1.07) node {$2$};
\draw [fill] (11.14,0.97) circle (1.5pt);
\draw(11.5,1.07) node {$2$};
\draw [fill] (9.87,3.0) circle (1.5pt);
\draw(10.2,3.1) node {$3$};
\draw [fill] (6.48,-2.) circle (1.5pt);
\draw(6.48,-2.4) node {$3$};
\draw [fill] (10.,-2.) circle (1.5pt);
\draw (10.0,-2.4) node {$2$};
\draw [fill] (8.,2.) circle (1.5pt);
\draw (7.73,2.15) node {$6$};
\draw [fill] (7.,1.) circle (1.5pt);
\draw(6.6,1.23) node {$5$};
\draw [fill] (7.46,0.) circle (1.5pt);
\draw(7.5,-0.4) node {$4$};
\draw [fill] (8.68,0.) circle (1.5pt);
\draw (8.65,-0.45) node {$8$};
\draw [fill] (9.,1.) circle (1.5pt);
\draw (9.4,1.23) node {$7$};
\end{scriptsize}
\end{tikzpicture}
\caption{A triangulation of the Dunce hat} \label{duncehat}
\end{center}	
\end{figure}
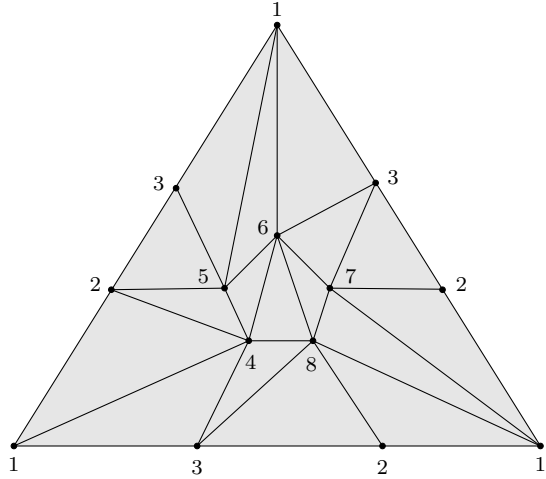	
 \end{Example}

In the next section we show that the Betti numbers of all
componentwise linear ideals appear as Betti numbers of Stanley-Reisner
ideals of chordal complexes.

\subsubsection*{Cohen-Macaulay properties}

Let $\Gamma$ be a simplicial complex on the vertex set $[n]$,
$I=\N(\Gamma)$ be an ideal of $S=K[x_1,\ldots,x_n]$ where $K$ is a
field, and let $K[\Gamma]=S/I$ be the Stanley-Reisner ring of
$\Gamma$.

A pure complex $\Gamma$ is called {\bf Cohen-Macaulay} over $K$ if $K[\Gamma]$
is a Cohen-Macaulay ring, or, equivalently, by Eagon and
Reiner~\cite[Theorem~3]{ER}, if $I^\vee=\N(\Gamma^\vee)$ has a linear
resolution.

Stanley~\cite{S} generalized the Cohen-Macaulay property to all
simplicial complexes, calling this new class of complexes {\bf
  sequentially Cohen-Macaulay}. In Duval's~\cite{D} characterization,
the complex $\Gamma$ is sequentially Cohen-Macaulay over $K$ if and
only if $\Gamma^{[d]}$ is Cohen-Macaulay (over $K$) for all $d\leq
\dim \Gamma$ .  Herzog and Hibi~\cite[Theorem~2.1(a)]{HHCWL} then
extended the criterion of Eagon and Reiner showing that a square-free
monomial ideal $I$ is componentwise linear if and only if $\N(I)^\vee$ is
sequentially Cohen-Macaulay.

Combining these facts with \cref{CWL+free:CWL} and \cref{main1}, we
make the following observation.

\begin{Corollary}[\bf Chordal complexes have sequentially Cohen-Macaulay duals]
  Let $\Gamma$ be a simplicial complex on $[n]$.   If $\Gamma$ is
  either $d$-chordal or $d$-collapsible or $d$-representable, then
  $(\Gamma^\vee)^{[n-d-2]}$ is Cohen-Macaulay. In particular, if
  $\Gamma$ is chordal, then $\Gamma^\vee$ is sequentially
  Cohen-Macaulay.
\end{Corollary}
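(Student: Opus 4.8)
The plan is to reduce everything to the single combinatorial identity
\[
(\Gamma^\vee)^{[n-d-2]}=(\Delta_d(\Gamma))^\vee ,
\]
which converts a statement about a pure skeleton of $\Gamma^\vee$ into a statement about the Alexander dual of the $d$-closure of $\Gamma$. Granting this identity, the first assertion is almost immediate. Set $\Sigma=(\Gamma^\vee)^{[n-d-2]}$, which is pure of dimension $n-d-2$, so Cohen-Macaulayness applies to it. Since Alexander duality is an involution, the identity gives $\Sigma^\vee=\Delta_d(\Gamma)$, and hence by the Stanley-Reisner correspondence together with \cref{l:prep-lemma}(a), $\N(\Sigma^\vee)=\N(\Delta_d(\Gamma))=I_{[d+1]}$. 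By \cref{main1}(a), (c), (d), whenever $\Gamma$ is $d$-chordal, $d$-collapsible, or $d$-representable, the ideal $I_{[d+1]}$ has a $(d+1)$-linear resolution over every field. The Eagon--Reiner criterion recalled above then says that $\Sigma$ is Cohen-Macaulay precisely because its Alexander-dual ideal $\N(\Sigma^\vee)=I_{[d+1]}$ has a linear resolution.

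To prove the identity I will compare facets. Both complexes are pure: the skeleton $(\Gamma^\vee)^{[n-d-2]}$ has as its facets exactly the $(n-d-2)$-dimensional faces of $\Gamma^\vee$, while the facets of any Alexander dual $\Sigma^\vee$ are the complements $[n]\sm N$ of the minimal nonfaces $N$ of $\Sigma$. Writing $M=[n]\sm N$, a set $M$ is an $(n-d-2)$-face of $\Gamma^\vee$ if and only if $|N|=d+1$ and $N\notin\Gamma$. On the other side, the minimal nonfaces of $\Delta_d(\Gamma)$ are, by \cref{d:d-closure}, exactly the $(d+1)$-subsets of $[n]$ that are nonfaces of $\Gamma$: every set of size $\leq d$ is a face of the $d$-closure, a $(d+1)$-set is a nonface of $\Delta_d(\Gamma)$ iff it is a nonface of $\Gamma$, and every larger nonface contains such a $(d+1)$-subset and so is not minimal. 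Thus the facets of $(\Delta_d(\Gamma))^\vee$ are exactly the sets $[n]\sm N$ with $|N|=d+1$ and $N\notin\Gamma$ --- the same list as for $(\Gamma^\vee)^{[n-d-2]}$. Having the same facets, the two complexes coincide.

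For the final ``in particular'' assertion I would not route through the skeletons one degree at a time, since that would force a separate treatment of the top skeleton (the value $d=0$ is excluded when $\Gamma$ has a nonface vertex). Instead I will invoke the global statement \cref{main1}(e): if $\Gamma$ is chordal, then $I=\N(\Gamma)$ is componentwise linear. As the Stanley-Reisner correspondence is a bijection, $\N(I)=\Gamma$, so by the Herzog--Hibi theorem recalled above (a square-free monomial ideal is componentwise linear if and only if its Alexander dual is sequentially Cohen-Macaulay) the dual $\N(I)^\vee=\Gamma^\vee$ is sequentially Cohen-Macaulay.

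I expect the only genuine content to be the facet computation establishing the identity; the remaining steps are bookkeeping with the Stanley-Reisner dictionary plus direct citations of Eagon--Reiner, Herzog--Hibi, and \cref{main1}. The point most likely to need care is matching the homological index $d+1$ (the degree in which $I_{[d+1]}$ is generated and linear) with the topological index $n-d-2$ (the skeleton dimension), and checking the degenerate cases where $\Gamma$ is the full simplex or has no $(d+1)$-nonface, in which $\Sigma$ is void and the claim is vacuous.
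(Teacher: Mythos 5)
Your proposal is correct and follows essentially the same route as the paper: the paper's proof consists of the single identity $I_{[d+1]}=\N\bigl(((\Gamma^\vee)^{[n-d-2]})^\vee\bigr)$ (cited from \cite[page 131]{F}, and equivalent under Alexander duality to the identity $(\Gamma^\vee)^{[n-d-2]}=(\Delta_d(\Gamma))^\vee$ that you establish), followed by Eagon--Reiner and an appeal to \cref{main1}, exactly as you do. The only difference is that you supply a self-contained facet computation for the identity where the paper cites it, and you make explicit the use of \cref{main1}(e) together with Herzog--Hibi for the ``in particular'' clause; both of these are correct and consistent with the paper's (terser) argument.
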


\begin{proof}
Setting $I=\N(\Gamma)$, it follows from \cite[page 131]{F} that  
    $$I_{[d+1]}=\N(((\Gamma^\vee)^{[n-d-2]})^\vee).$$ 
    Hence $I_{[d+1]}$ has linear resolution if and only if $(\Gamma^\vee)^{[n-d-2]}$ is Cohen-Macaulay. Our statements now follow from \cref{main1}.

     \end{proof}


\section{More chordal complexes and Betti tables of componentwise linear ideals}

   In this section we focus on well-known classes of componentwise
   linear ideals with, and of simplicial complexes which arise from
   them. It is still not known whether Alexander duals of shellable
   complexes (Bj\"orner and Wachs~\cite{BWI}), which provide a large
   class of componentwise linear ideals containing most other such
   ideals, are chordal  (see Herzog and Hibi~\cite{HHCWL},
   and also Eagon and Reiner~\cite{ER}).

We also show in this section that the Betti table of every
componentwise linear ideal is equal to that of the Stanley-Reisner
ideal of a chordal complex.


\subsection{Alexander duals of vertex decomposable
    complexes}

  One large class of ideals with linear resolution is the class
  of Stanley-Reisner ideals of the Alexander duals of vertex
  decomposable complexes  (Bj\"orner and
    Wachs~\cite{BWII}, Provan and Billera~\cite{PB}).
    Nikseresht~\cite{Nik} showed that if a pure $d$-dimensional
    simplicial complex on $n$ vertices is vertex decomposable, then
    its Alexander dual is $(n-d-2)$-chordal. Here we use this result
    to show that the Alexander dual of any vertex decomposable
    simplicial complex is chordal.

    The main idea is  that, similar to the property of sequential
    Cohen-Macaulayness, vertex decomposability of a simplicial complex
    reduces to that of its pure skeletons, \cite[Lemma~3.10]{Wo}.

  \begin{Definition}[{\bf vertex decomposable simplicial complex}] A
    simplicial complex $\Gamma$ on the vertex set $[n]$ is called {\bf
      vertex decomposable} if it is a simplex, including
      $\emptyset$ and $\{\emptyset\}$, or it contains a vertex $v$
      such that
 
 \begin{itemize}
\item[(i)] $v$ is a {\bf shedding vertex} of $\Gamma$, i.e no face of  $\link_\Gamma (v)$  is a facet of $\Gamma \sm \{v\}$, and 
\item[(ii)] both $\Gamma \sm \{v\}$ and $\link_\Gamma(v)$ are vertex decomposable.
\end{itemize}
\end{Definition}

Nikseresht~\cite[Lemma~3.1]{Nik} shows that for a pure $d$-dimensional
complex $\Delta$, a vertex $v$ is a shedding vertex if and only if
$\Delta \sm \{v\}$ is also pure of dimension $d$. This fact will be used
in the arguments below.

 \begin{Theorem}[{\bf Alexander duals of vertex decomposable complexes are chordal}]\label{VD->Chordal}
 Let $\Gamma$ be a vertex decomposable complex on $[n]$. Then its Alexander dual $\Gamma^\vee$ is chordal.
 \end{Theorem}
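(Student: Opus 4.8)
The plan is to reduce the chordality of $\Gamma^\vee$ to Nikseresht's theorem applied to the pure skeletons of $\Gamma$, with the bridge between the two being the identity
\[
\Delta_d(\Gamma^\vee)=(\Gamma^{[n-d-2]})^\vee,
\]
valid for every $d$ with $0\le n-d-2$. To obtain it, first recall from \cref{l:prep-lemma}(a) that $\N\big(\N(\Gamma)_{[d+1]}\big)=\Delta_d(\Gamma)$, while the computation of $I_{[d+1]}$ from \cite[page~131]{F} (as used in the proof of the preceding corollary) gives $\N(\Gamma)_{[d+1]}=\N\big(((\Gamma^\vee)^{[n-d-2]})^\vee\big)$. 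Since the Stanley--Reisner correspondence is a bijection, comparing Stanley--Reisner complexes yields $\Delta_d(\Gamma)=((\Gamma^\vee)^{[n-d-2]})^\vee$ for \emph{every} complex on $[n]$; replacing $\Gamma$ by $\Gamma^\vee$ and using $\Gamma^{\vee\vee}=\Gamma$ produces the displayed identity.

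Next I would invoke Woodroofe's reduction of vertex decomposability to pure skeletons \cite[Lemma~3.10]{Wo}: since $\Gamma$ is vertex decomposable, each pure skeleton $\Gamma^{[j]}$ is again vertex decomposable, and it is pure of dimension $j$ by construction. Fixing $d$ with $0\le n-d-2\le \dim\Gamma$ and setting $j=n-d-2$, the complex $\Gamma^{[j]}$ is pure $j$-dimensional and vertex decomposable, so Nikseresht's theorem \cite{Nik} shows that $(\Gamma^{[j]})^\vee$ is $(n-j-2)$-chordal, that is, $d$-chordal. By the identity above this says precisely that $\Delta_d(\Gamma^\vee)$ is $d$-chordal; and because $\Delta_d(\Gamma^\vee)$ is a $d$-closure (hence equal to its own $d$-closure), its $d$-chordality is equivalent to the $d$-chordality of $\Gamma^\vee$ itself. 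So $\Gamma^\vee$ is $d$-chordal for every such $d$.

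To finish I would apply \cref{the bounds for checking chordality} to reduce to finitely many values of $d$, all lying in the good range. The minimal nonfaces of $\Gamma^\vee$ are exactly the complements $[n]-G$ of the facets $G$ of $\Gamma$, so their dimensions are $\{\,n-|G|-1 : G\in\facets(\Gamma)\,\}$; the smallest is $t=n-2-\dim\Gamma$ and the largest is $s\le n-2$. Hence for every $d$ in the range $t\le d\le \min\{r,s\}$ prescribed by \cref{the bounds for checking chordality} (with $r=\dim\Gamma^\vee$) one has $0\le n-d-2\le \dim\Gamma$, so the previous paragraph applies and $\Gamma^\vee$ is $d$-chordal throughout that range. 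Therefore $\Gamma^\vee$ is chordal.

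The step needing the most care, and what I expect to be the main obstacle, is the application of Nikseresht's theorem to $\Gamma^{[j]}$: that theorem is phrased for a complex on $n$ vertices, whereas a pure skeleton of a non-pure $\Gamma$ may fail to use all of $[n]$. A vertex $v$ not occurring in $\Gamma^{[j]}$ becomes a cone point of $(\Gamma^{[j]})^\vee$, so the clean way around this is to apply Nikseresht on the set $V$ of vertices actually appearing in $\Gamma^{[j]}$ and then verify that successively coning the resulting chordal complex over the points of $[n]-V$ both restores the ambient set to $[n]$ and shifts the chordality parameter from $|V|-j-2$ up to $n-j-2$. Alternatively, one checks directly that the identity $\Delta_d(\Gamma^\vee)=(\Gamma^{[n-d-2]})^\vee$ already encodes this coning, so that no separate vertex-set bookkeeping is required and the argument above goes through verbatim.
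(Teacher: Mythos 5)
Your proposal is correct and follows essentially the same route as the paper's proof: the identity $\Delta_d(\Gamma^\vee)=(\Gamma^{[n-d-2]})^\vee$ obtained from \cite[page~131]{F} together with \cref{l:prep-lemma}, Woodroofe's reduction of vertex decomposability to pure skeletons, and Nikseresht's theorem that duals of pure vertex decomposable complexes are $(n-t-2)$-chordal. The only difference is that you additionally pin down the relevant range of $d$ via \cref{the bounds for checking chordality} and flag the vertex-set bookkeeping for skeletons, points the paper's proof passes over silently; these are refinements of, not departures from, the same argument.
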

 \begin{proof} Let $d \geq 1$. We need to show that
     $\Delta_{d}(\Gamma^\vee)$ is $d$-chordal. From~\cite[page 131]{F}, we 
     have that $$\N((\Gamma^{[n-d-2]})^{\vee})=\N(\Gamma^\vee)_{[d+1]}$$ which by
\cref{l:prep-lemma} implies that 
 $$(\Gamma^{[n-d-2]})^{\vee}=\Delta_{d}(\Gamma^\vee).$$

Woodroofe proves in \cite[Lemma~3.10]{Wo} that all the skeletons of a vertex decomposable simplicial complex are vertex decomposable. Since $\Gamma$ is vertex decomposable, it follows that 
$\Gamma^{[n-d-2]}$ is vertex decomposable too. On the other hand
Nikseresht~\cite[Theorem~3.10]{Nik} proved that  the dual of any
pure $t$-dimensional vertex decomposable complex is
$(n-t-2)$-chordal. Therefore
$\left(\Gamma^{[n-d-2]}\right)^\vee=\Delta_{d}(\Gamma^\vee)$ is
$d$-chordal, as desired.
  \end{proof}


 \subsection{Square-free (strongly) stable ideals}

Square-free stable ideals, defined by Aramova, Herzog and
Hibi~\cite{AHH} form a large class of componentwise linear ideals.
This class contains the class of square-free strongly stable ideals
and lexsegment ideals.  

 For a monomial $u\in S=K[x_1,\ldots,x_n]$ we define $m(u)=\max\{i \st
 x_i \mid u \}$. A square-free monomial ideal $I$ is called {\bf
   square-free stable} if for all square-free monomials $u\in I$
 $$x_i \left ( \frac{u}{x_{m(u)}} \right )\in I \mbox{ for all }
 i<m(u) \mbox{ such that } x_i \nmid u,$$ and $I$ is called {\bf
   square-free strongly stable} if for all square-free monomials $u\in
 I$ and $x_j \mid u$
 $$x_i \left ( \frac{u}{x_{j}} \right )\in I  
 \mbox{ for all } i<j \mbox{ such that } x_i \nmid u.$$ 
It turns out that the defining property for
 square-free (strongly) stable ideals $I$ needs only be checked for the monomials in
 the minimal monomial generating set
 $\mathcal{G}(I)$~\cite[Problem~6.9]{HHBook}.

 \begin{Theorem}[{\bf Stanley-Reisner complexes of square-free stable
 ideals are chordal}]\label{SS} Let $I$ be a square-free stable ideal
   in $S=K[x_1,\ldots,x_n]$, $K$ a field. Then $\N(I)$ is chordal.
 \end{Theorem}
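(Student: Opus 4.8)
The plan is to reduce the statement to a single-degree, purely combinatorial collapsibility claim and then prove that claim by peeling off the largest vertex. First I would invoke \cref{d-collapsible=chordal}: $\N(I)$ is chordal if and only if $\Delta_d(\N(I))$ is $d$-collapsible for every $d\geq 1$. By \cref{l:prep-lemma}(a) we have $\Delta_d(\N(I))=\N(I_{[d+1]})$, so it suffices to show that $\N(I_{[d+1]})$ is $d$-collapsible for all $d$. The first routine step is to observe that $I_{[d+1]}$ is again square-free stable: if $u\in I$ is square-free of degree $d+1$ then so is $x_i(u/x_{m(u)})$ for $i<m(u)$ with $x_i\nmid u$, hence it lies in $I_{[d+1]}$. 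Thus the whole theorem reduces to the core claim: \emph{if $J$ is a square-free stable ideal generated in a single degree $d+1$, then $\Gamma:=\N(J)$ is $d$-collapsible.}

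For the core claim I would induct on the number of vertices $n$, peeling off the largest vertex $n$. Setting $x_n=0$ produces a square-free stable ideal $J'$ in $K[x_1,\dots,x_{n-1}]$, again generated in degree $d+1$, whose Stanley--Reisner complex is exactly the induced subcomplex $\Gamma_{[n-1]}=\Gamma\setminus\{n\}$; by the inductive hypothesis $\Gamma_{[n-1]}$ is $d$-collapsible. It therefore suffices to show that $\Gamma$ itself $d$-collapses onto $\Gamma\setminus\{n\}$, i.e.\ that the star of $n$ can be stripped away by a sequence of elementary $d$-collapses, after which the two collapsing sequences concatenate. The base cases ($J=0$, or $J$ comprising all degree-$(d+1)$ square-free monomials) reduce to \cref{complete} and to the evident collapsibility of $\langle[n]\rangle^{[d-1]}$.

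The heart of the argument is collapsing the star $\{n\}*\link_\Gamma(n)$, and here the square-free stability of $J$ is exactly what makes the link tractable: since the nonfaces of $\Gamma$ are closed under lowering their largest element, the faces $G\cup\{n\}$ are organized in a predictable ``shifted'' way that lets one locate free faces. Concretely, I would process the faces containing $n$ in decreasing dimension; a maximal remaining such face $\{n\}\cup F$ is a facet of the current complex and hence a free face, but when $\dim(\{n\}\cup F)\geq d$ it cannot be removed by an elementary $d$-collapse directly. This is precisely where Tancer's \cref{l:experimental} enters: one instead collapses a suitable $(d-1)$-dimensional free subface $\{n\}\cup G\subsetneq\{n\}\cup F$, which $d$-collapses the complex past $\{n\}\cup F$ as well. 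The stability hypothesis would be used to guarantee that such a subface can be chosen \emph{free} at each stage, its unique extension inside the link being the intended facet, so that iterating removes exactly the star of $n$.

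The main obstacle is this last piece of bookkeeping: verifying, at each elementary step, that the chosen $(d-1)$-face lies in a unique facet of the current complex and that the accumulated collapses peel off precisely $\{n\}*\link_\Gamma(n)$ and nothing more. I expect the stability condition to make the correct choice canonical — for instance always selecting the reverse-lexicographically extremal $(d-1)$-subface — but packaging this into a genuine free sequence, or equivalently, via \cref{free sequence=>simp sequence}, into a simplicial order of the $d$-closure, is the technical crux. I would also stress that one cannot shortcut through ``square-free stable $\Rightarrow$ componentwise linear $\Rightarrow$ chordal'': the passage from linear resolutions to collapsibility fails in general (the Dunce hat example), so the combinatorics of stability must be exploited directly. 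Finally, having established $d$-collapsibility of each $\N(I_{[d+1]})$, \cref{d-collapsible=chordal} assembles these into chordality of $\N(I)$; \cref{the bounds for checking chordality} further shows that only the finitely many $d$ lying between the least and greatest generator degrees need be checked.
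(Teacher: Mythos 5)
Your reduction to the equigenerated case is exactly the paper's: you show $I_{[d+1]}$ is again square-free stable and, via \cref{l:prep-lemma}(a) and \cref{d-collapsible=chordal}, reduce everything to the claim that $\N(J)$ is $d$-chordal (equivalently $d$-collapsible, since it is a $d$-closure) for a square-free stable ideal $J$ generated in degree $d+1$. At that point the paper simply cites Nikseresht and Zaare-Nahandi \cite{NZ} for the equigenerated claim, whereas you try to prove it directly by induction on $n$. That direct route is where the proposal breaks down, and not merely in the deferred bookkeeping: the intermediate claim you propose to establish, namely that $\Gamma=\N(J)$ $d$-collapses onto $\Gamma\setminus\{n\}=\Gamma_{[n-1]}$ by stripping the star of the largest vertex, is false in general. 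Take $J=(x_1x_2)$ in $K[x_1,x_2,x_3]$ with $d=1$; this is square-free stable and equigenerated, and $\Gamma=\langle\{1,3\},\{2,3\}\rangle$ is the cone with apex $3$ over $\langle\{1\},\{2\}\rangle$. The only free faces of dimension $<1$ are $\{1\}$ and $\{2\}$, and collapsing at either removes a face not containing $3$; the vertex $\{3\}$ itself lies in two facets and is not free. So no sequence of elementary $1$-collapses removes exactly the star of $3$. In a square-free stable ideal the \emph{large} variables are the ones that tend to be absent from generators, so the top vertex is typically a cone point of $\N(J)$, which is precisely the situation in which one collapses \emph{into} that vertex rather than away from it.

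Beyond this directional error, the part you flag as ``the technical crux'' --- producing, at each stage, a $(d-1)$-dimensional free face whose unique containing facet is the intended one --- is not a routine verification to be filled in later; it is the entire content of the cited theorem of \cite{NZ}. Your instinct that stability must be exploited combinatorially (and that one cannot detour through componentwise linearity, as the Dunce hat shows) is correct, and the outer frame of the argument matches the paper's. But as written the proposal replaces the one nontrivial input with an induction whose key step fails on the simplest nontrivial example, so the proof is not complete. Either invoke \cite[Theorem~2.5]{NZ} as the paper does, or rebuild the induction around a vertex for which the stability exchange actually certifies freeness (e.g.\ working with the smallest rather than the largest index, where in the example above $\Gamma\coll{\{1\}}=\Gamma\setminus\{1\}$ does hold), and then genuinely verify the free-sequence condition via \cref{free sequence=>simp sequence}.
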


 \begin{proof}
 First note that for each $d \geq 1$ the ideal $I_{[d+1]}$ is square-free
 stable, for if $u\in \mathcal{G}(I_{[d+1]})\subseteq I$ and $i<m(u)$
 with $x_i\not|u$, the monomial $x_i(u/x_{m(u)})\in I$ and $\deg
 (x_i(u/x_{m(u)}))=d+1$ which implies that $x_i(u/x_{m(u)})\in
 I_{[d+1]}$. By Nikseresht and Zaare-Nahandi's
 work~\cite[Theorem~2.5]{NZ} the complex
 $\Delta_d(\N(I))=\N(I_{[d+1]})$ is $d$-chordal. Therefore $\N(I)$ is
 chordal.
  \end{proof}

Recall that a simplicial complex $\Gamma$ is called {\bf shifted} if
for any face $F\in \Gamma$, any $i\in F$ and $j\in [n]$ with $j>i$ one
has $(F-\{i\})\cup\{j\}\in \Gamma$.

\Cref{SS} in particular implies that square-free strongly stable
ideals have chordal Stanley-Reisner complexes. This statement can also
be deduced from the fact that and ideal $I$ is square-free strongly
stable if and only if $\left(\N(I)\right)^\vee$ is shifted, and
therefore vertex decomposable by~\cite[Theorem~11.3]{BWII}. Hence
$\N(I)$ is chordal by \cref{VD->Chordal}. 

\medskip
   We now show that the study of the Betti tables of componentwise
   linear ideals reduces to the study of the Betti tables of
   Stanley-Reisner ideals of chordal complexes, generalizing a similar
   result of Bigdeli and coauthors in the case of equigenerated
   ideals~\cite[Theorem~3.3]{BHYZ}.
  
  For the proof we use  the {\bf square-free
      operator}~\cite{HHBook} which takes a monomial
    $u=x_{i_1}x_{i_2}\cdots x_{i_t} \in S$ with $i_1\leq \cdots\leq
    i_t$, to the square-free monomial
    $u^{\sigma}=x_{i_1}x_{i_2+1}\cdots x_{i_t+(t-1)}$. If $I$ is a
    monomial ideal with $\mathcal{G}(I)=\{u_1,\ldots,u_m\}$, then
    $I^{\sigma}$ is the square-free monomial ideal $$I^{\sigma}=
    (u_1^{\sigma},\ldots,u_m^{\sigma}).$$
    
 \begin{Theorem}[{\bf Chordal complexes give Betti tables of 
all componentwise linear ideals}]\label{BT-CWL} Let $K$ be a field and
   $I\subset S=K[x_1,\ldots,x_n]$ be a graded ideal which is
   componentwise linear. Then there exists a chordal complex $\Gamma$
   such that the Betti table of $I$ coincides with that of
   $\N(\Gamma)$.
 \end{Theorem}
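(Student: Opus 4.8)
The plan is to produce a square-free stable monomial ideal $J$ whose graded Betti numbers agree with those of $I$; once this is done, \cref{SS} immediately yields that $\Gamma:=\N(J)$ is chordal, and since $\N(\Gamma)=J$ by the Stanley--Reisner correspondence of \cref{d:SR}, the complex $\Gamma$ realizes exactly the Betti table of $I$. Thus the entire content of the theorem reduces to the passage from an arbitrary componentwise linear ideal to a square-free stable ideal with the same Betti table.

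I would carry this out in two moves. First I would replace $I$ by a stable monomial ideal $J_0$ with $\beta_{i,j}(J_0)=\beta_{i,j}(I)$ for all $i,j$. The natural candidate is the generic initial ideal $J_0=\mathrm{gin}(I)$ taken with respect to the reverse lexicographic order: it is known that componentwise linearity of $I$ forces $\beta_{i,j}(I)=\beta_{i,j}(\mathrm{gin}(I))$, and $\mathrm{gin}(I)$ is Borel-fixed, hence strongly stable and in particular stable when $\operatorname{char}K=0$. Second I would apply the square-free operator $(-)^{\sigma}$ introduced just before the statement and set $J:=J_0^{\sigma}$, using two facts: (i) $(-)^{\sigma}$ carries a stable ideal to a square-free stable ideal with minimal generators $\{u^{\sigma}:u\in\mathcal{G}(J_0)\}$, and (ii) it preserves all graded Betti numbers. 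Fact (ii) follows by comparing the Eliahou--Kervaire formula $\beta_{i,i+q}(J_0)=\sum_{u\in\mathcal{G}(J_0),\,\deg u=q}\binom{m(u)-1}{i}$ with the square-free Eliahou--Kervaire formula $\beta_{i,i+q}(J)=\sum_{v\in\mathcal{G}(J),\,\deg v=q}\binom{m(v)-q}{i}$ and observing that $m(u^{\sigma})=m(u)+q-1$, so that $m(u^{\sigma})-q=m(u)-1$; both resolutions are minimal over every field, so the comparison is characteristic-free. Chaining the two moves gives $\beta_{i,j}(J)=\beta_{i,j}(J_0)=\beta_{i,j}(I)$, as required.

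I expect the main obstacle to lie entirely in the first move in positive characteristic: there $\mathrm{gin}(I)$ is only Borel-fixed, and a Borel-fixed ideal need not be stable (for example $(x_1^2,x_2^2)$ in characteristic $2$ is Borel-fixed but not stable), so $\mathrm{gin}(I)$ cannot be fed to $(-)^{\sigma}$ directly. The remedy is to build a stable---indeed a square-free strongly stable---ideal realizing the Betti table of $I$ by other means: each component $I_{\langle q\rangle}$ has a $q$-linear resolution over $K$, so the numbers $\beta_{i,i+q}(I)$ are the data one must match, and the task is to exhibit a compatible family of strongly stable generating sets, one in each degree $q$, whose square-free Eliahou--Kervaire contributions sum to these $\beta_{i,i+q}(I)$. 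This gluing across degrees is precisely what $\mathrm{gin}$ performs automatically in characteristic $0$, and it is the step requiring genuine care; in characteristic $0$ no obstacle arises and the two moves above finish the proof. Finally, a minor bookkeeping check is that $J$ lives in a larger polynomial ring $K[x_1,\dots,x_N]$ than $I$, but the extra variables divide no minimal generator of $J$, so they do not affect Betti numbers and the Betti table of $\N(\Gamma)=J$ genuinely coincides with that of $I$.
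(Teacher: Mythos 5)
Your route is the same as the paper's: pass to a (strongly) stable monomial ideal with the same Betti table, apply the square-free operator, and invoke \cref{SS}. Your second move is exactly the paper's second move -- the paper cites \cite[Lemma~11.2.5]{HHBook} for the fact that $(-)^{\sigma}$ sends strongly stable ideals to square-free strongly stable ones and \cite[Lemma~11.2.6]{HHBook} for preservation of the Betti table; your comparison of the Eliahou--Kervaire formula for $J_0$ with the Aramova--Herzog--Hibi formula for $J_0^{\sigma}$, via $m(u^{\sigma})-\deg u=m(u)-1$, is a correct and characteristic-free proof of the latter. The remark about the extra variables introduced by $(-)^{\sigma}$ is also fine.

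The gap is the one you flag yourself, and it is genuine: the theorem is stated over an arbitrary field, while your first move ($J_0=\mathrm{gin}(I)$ is strongly stable with $\beta_{i,j}(J_0)=\beta_{i,j}(I)$) is justified only in characteristic $0$. In positive characteristic $\mathrm{gin}(I)$ is merely Borel-fixed, need not be stable (your example $(x_1^2,x_2^2)$ in characteristic $2$ is apt), and so cannot be fed to $(-)^{\sigma}$ nor to Eliahou--Kervaire. The paper closes precisely this step by citing \cite[Proposition~2.1]{HShV}, which asserts that the Betti table of any componentwise linear ideal coincides with that of some strongly stable ideal; with that statement in hand your argument becomes the paper's argument verbatim. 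The ``gluing across degrees'' you sketch at the end (matching, degree by degree, the data coming from the linear resolutions of the components $I_{\langle q\rangle}$) points in the right direction for an independent proof, but as written it is a program rather than an argument, so the positive-characteristic case of \cref{BT-CWL} remains open in your write-up unless you import the cited proposition or an equivalent.
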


 \begin{proof}
 It follows from Herzog and coauthors~\cite[Proposition~2.1]{HShV}
 that the Betti table of a componentwise linear ideal $I$ coincides
 with the Betti table of a strongly stable ideal $J$ (not necessarily
 square-free).  By \cite[Lemma~11.2.5]{HHBook} $J^{\sigma}$ is
 square-free strongly stable. Moreover, \cite[Lemma~11.2.6]{HHBook}
 implies that $J^{\sigma}$ has the same Betti table as of $J$. Hence
 the Betti tables of $I$ and $J^\sigma$ coincide. Since square-free
 strongly stable ideals are square-free stable,
 \cref{SS} implies that $\N(J^{\sigma})$ is chordal, as desired.
  \end{proof}
 

 \subsection{Square-free Gotzmann ideals}

A homogeneous ideal $I$ in a polynomial ring $S=K[x_1,\ldots,x_n]$
over a field $K$ is a {\bf Gotzmann ideal} if its ``growth'' in
degrees is similar to a lex ideal. More precisely, let $S_1$ be the
first graded piece of $S$ (generated by $x_1,\ldots,x_n$ as a
$K$-vector space), and similarly, let $I_u$ be the $u$-th graded piece
of $I$ (generated by all degree $u$ monomials in $I$), and $L$ be a
lex ideal with the same Hilbert function as $I$.  Then $I$ is Gotzmann
if and only if $\dim_K(S_1I_u)=\dim_K(S_1L_u)$ for all $u \geq 0$.

Herzog and Hibi~\cite{HHCWL} proved that Gotzmann monomial ideals are
componentwise linear. Below we use a characterization of Gotzmann
square-free monomial ideals due to Hoefel and Mermin~\cite{HM} to show
that the Stanley-Reisner complex of these ideals is chordal.

 \begin{Theorem}[Hoefel~\cite{Ho}, Theorem 5.9;
   Hoefel-Mermin~\cite{HM}, Theorem 3.9]\label{t:HMG} Let $K$ be a field,
   $S=K[x_1,\ldots,x_n]$ and ideal $I$ be a square-free monomial ideal in
   $S$. Then $I$ is a Gotzmann ideal if and only if  $I$
     is generated by one variable or
   $$I=m_1(z_{1,1},\ldots,z_{1,r_1}) +
   m_1m_2(z_{2,1},\ldots,z_{2,r_2}) + \cdots + m_1m_2\cdots
   m_s(z_{s,1},\ldots,z_{s,r_s})$$ for some square-free monomials
   $m_1,\ldots,m_s$ and variables $z_{i,j}$ all having pairwise
   disjoint support and satisfying
\begin{itemize}
\item $\deg (m_i) \geq 1$ for $1< i \leq s$,
\item $r_i \geq 1$ for  $1\leq  i < s$,
\item $r_s \neq 1$ and
\item $\deg(m_s) \geq 2$ when $r_s = 0$.
\end{itemize}
\end{Theorem}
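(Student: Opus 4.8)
The plan is to prove both implications, the main engine being the reformulation of the Gotzmann property as \emph{minimal growth}: by definition $I$ is Gotzmann exactly when $\dim_K(S_1 I_u)=\dim_K(S_1 L_u)$ for every $u$, and since the lex ideal $L$ realizes the Macaulay lower bound on growth, this says $I$ attains the minimal possible shadow in each degree. By Gotzmann's persistence theorem it suffices to verify this up to the largest degree of a minimal generator. Both directions will run by an induction that peels one layer off the recursive shape
$$I = m_1\Big((z_{1,1},\ldots,z_{1,r_1}) + m_2\big((z_{2,1},\ldots,z_{2,r_2}) + \cdots\big)\Big)$$
of the asserted form, reducing to an ideal involving strictly fewer variables.

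For the implication $(\Leftarrow)$ I would first record two base cases: a principal ideal (in particular one generated by a single variable) is Gotzmann, and an ideal $(z_1,\ldots,z_r)$ generated by distinct variables is Gotzmann because, after relabeling, it is literally a lex ideal. The inductive step then rests on two closure statements whose verifications are Hilbert-function computations made tractable by the disjoint-support hypothesis: first, if $J$ is Gotzmann and $w$ is a variable outside the support of $J$, then $wJ$ is Gotzmann, as multiplying by $w$ merely shifts degrees and preserves the whole growth profile; second, if $J$ is Gotzmann and supported away from $z_{1,1},\ldots,z_{1,r_1}$, then $(z_{1,1},\ldots,z_{1,r_1}) + m_2 J$ is Gotzmann, since prepending an initial linear segment is precisely what lex ideals do and, by disjointness, the minimal shadows of the two summands simply add. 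Applying the first statement to the variables dividing $m_1$ and then the second collapses the given ideal onto its tail, which is Gotzmann by induction. The numerical side conditions are exactly the non-degeneracy requirements that make the representation canonical: $r_s=1$ is forbidden because a lone deepest variable $m_1\cdots m_s z_{s,1}$ can be absorbed into $m_s$ (giving $r_s=0$), and the clause $\deg m_s\geq 2$ when $r_s=0$ separates the principal-tail case from the single-variable base case.

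For the implication $(\Rightarrow)$, suppose $I$ is square-free and Gotzmann; recall that it is then componentwise linear, so each graded component has a linear resolution. Let $m_1$ be the largest monomial dividing every minimal generator of lowest degree and write those generators as $m_1 z$. The minimal-growth condition forces each such $z$ to be a single variable: a non-linear cofactor, or two lowest-degree generators overlapping a later block, would enlarge the shadow beyond the lex bound and contradict Gotzmann-ness. One then argues that the variables $z_{1,1},\ldots,z_{1,r_1}$ are disjoint from the supports of all higher-degree generators, again because an overlap would force an extra minimal generator and break minimal growth; peeling off the linear block leaves a residual ideal of the form $m_2 J_2$ that is still square-free, still Gotzmann, and involves strictly fewer variables, so the induction hypothesis supplies the nested form together with all the stated conditions.

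The hard part will be the $(\Rightarrow)$ direction, and inside it the claim that minimal growth rigidly forces both the common-factor-then-linear shape of each block and the pairwise disjointness of supports across blocks. This amounts to an equality analysis for the Macaulay (equivalently, in the square-free translation, Kruskal--Katona) shadow bound, whose extremal configurations are exactly the nested lex-like families recorded in the statement; carrying it out while keeping track of the degenerate tail $r_s=0$ versus $r_s\geq 2$ is where the real bookkeeping lives.
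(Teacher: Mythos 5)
First, a point of reference: the paper does not prove this statement at all --- it is imported verbatim from Hoefel's thesis (Theorem~5.9) and Hoefel--Mermin (Theorem~3.9) and used as a black box in the proof of \cref{Gotz}. So there is no internal proof to compare your attempt against; your proposal has to be judged as a free-standing argument, and as such it is a plausible roadmap rather than a proof.

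The genuine gaps are in exactly the places you flag but do not fill. In the $(\Leftarrow)$ direction, both ``closure statements'' are asserted, not proved, and neither is routine. That $wJ$ is Gotzmann whenever $J$ is does not follow from ``multiplying by $w$ merely shifts degrees'': the lex ideal with the same Hilbert function as $wJ$ is not $w$ times the lex ideal of $J$, so you must actually compute that the Macaulay/Kruskal--Katona lower bound on $\dim_K(S_1(wJ)_u)$ is attained, which requires tracking how the binomial expansion of $\dim_K J_{u-1}$ transforms under the degree shift. Likewise, the claim that for disjointly supported pieces ``the minimal shadows of the two summands simply add'' is false in general for sums of Gotzmann ideals (Gotzmann-ness is not closed under disjoint sums without extra hypotheses --- this is precisely why the theorem imposes the side conditions $r_i\geq 1$, $r_s\neq 1$, $\deg(m_s)\geq 2$); the additivity has to be established for the specific shape $(z_{1,1},\ldots,z_{1,r_1})+m_2J$ by an explicit comparison with the lex ideal of the combined Hilbert function. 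In the $(\Rightarrow)$ direction you correctly identify the crux --- the equality analysis of the Kruskal--Katona bound and the rigidity of its extremal configurations --- but you leave it entirely undone, and describing its conclusion as ``exactly the nested lex-like families recorded in the statement'' is essentially restating the theorem rather than proving it. There is also an unaddressed step in the peeling argument: taking $m_1$ to be the gcd of the \emph{lowest-degree} generators, you still need to show $m_1$ divides every higher-degree minimal generator and that the residual ideal (a colon-type ideal) is again square-free Gotzmann in order to invoke the induction hypothesis. Until these four points are supplied with actual computations, the proposal is an outline of the Hoefel--Mermin proof strategy, not a replacement for it.
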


\begin{Theorem}[{\bf Gotzmann ideals are chordal}]\label{Gotz}
  Let $I$ be a Gotzmann square-free monomial ideal in
  $S=K[x_1,\ldots,x_n]$, $K$ a field. Then $\N(I)$ is chordal.
  \end{Theorem}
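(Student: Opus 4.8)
The plan is to deduce this from \cref{SS} by showing that, after a harmless relabeling of the variables, every Gotzmann square-free monomial ideal is in fact square-free stable. Chordality of $\N(I)$ is a purely combinatorial property of the simplicial complex, since the notions of $d$-closure, simplicial face, and simplicial order in \cref{d:d-closure,simp,d:chordal} are all invariant under permuting the ground set $[n]$; and a permutation $\pi$ of $[n]$ carries $\N(I)$ to $\N(\pi(I))$. Thus we are free to reorder the variables before checking square-free stability, and it suffices to produce an ordering of the variables under which $I$ becomes square-free stable and then invoke \cref{SS}.

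First I would invoke the Hoefel--Mermin description in \cref{t:HMG}. If $I$ is generated by a single variable the claim is immediate (after relabeling that variable to $x_1$, the ideal is trivially square-free stable). Otherwise write $I = m_1(z_{1,1},\ldots,z_{1,r_1}) + \cdots + m_1\cdots m_s(z_{s,1},\ldots,z_{s,r_s})$ as in \cref{t:HMG}, and set $M_i = \mathrm{supp}(m_i)$ and $Z_i = \{z_{i,1},\ldots,z_{i,r_i}\}$; by hypothesis all of these, together with the set $C$ of variables occurring in no generator, are pairwise disjoint. The minimal generators then correspond to the vertex sets $G_{i,j} = M_1\cup\cdots\cup M_i\cup\{z_{i,j}\}$ (and to $M_1\cup\cdots\cup M_s$ when $r_s=0$). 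I would relabel so that the indices strictly increase through the blocks in the order $M_1, Z_1, M_2, Z_2, \ldots, M_s, Z_s$, with the free variables $C$ receiving the largest indices.

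With this labeling the verification of square-free stability is the heart of the argument, and by the fact recalled before \cref{SS} it is enough to check the condition on each minimal generator $u = \bfx_{G_{i,j}}$. Here $m(u)$ is the index of $z_{i,j}$, and the indices $i' < m(u)$ with $x_{i'}\nmid u$ are exactly the earlier $z$-variables $z_{i'',j''}$ with $i''<i$ together with the $z$-variables of the same block $i$ lying below $z_{i,j}$. In either case the monomial $x_{i'}(u/x_{m(u)}) = \bfx_{M_1\cup\cdots\cup M_i\cup\{i'\}}$ is divisible by an earlier generator ($G_{i'',j''}$ in the first case, $G_{i,j'}$ in the second), hence lies in $I$; this is where the structural conditions $r_i\ge 1$ for $i<s$ are used. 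The main points requiring care are the placement of the free variables last (so that no ``gap'' index below $m(u)$ escapes the two cases above) and the boundary case $r_s=0$, where the top generator is $M_1\cup\cdots\cup M_s$ and removing its largest variable still leaves room for an earlier $G_{i'',j''}$. Once square-free stability is established, \cref{SS} immediately gives that $\N(I)$ is chordal.
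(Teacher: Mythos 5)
Your proposal is correct and follows essentially the same route as the paper: invoke the Hoefel--Mermin normal form, relabel the variables block by block (with the unused variables last), verify the square-free stability condition on the minimal generators, and conclude via \cref{SS}. The only cosmetic difference is that the paper verifies the stronger square-free \emph{strongly} stable condition, while you check only square-free stability, which is all that \cref{SS} requires; your explicit treatment of the $r_s=0$ and single-variable cases is a small point of extra care, not a change of method.
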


  \begin{proof} With notation as in \cref{t:HMG},
    let $$I=m_1(z_{1,1},\ldots,z_{1,r_1}) +
    m_1m_2(z_{2,1},\ldots,z_{2,r_2}) + \cdots + m_1m_2\cdots
    m_s(z_{s,1},\ldots,z_{s,r_s}),$$ where $m_i=y_{i,1} \cdots
    y_{i,t_i}$ for $1 \leq i \leq s$, and all the $z_{i,j}$ and
    $y_{i,j}$ are distinct variables in $\{x_1,\ldots,x_n\}$. 

    Now we re-order the variables, so that setting
    $\alpha_i=\displaystyle \sum^{{i-1}}_{j=1} t_j+r_j$ for $i>1$ and
    $\alpha_1=0$,  for $1 \leq i\leq s$ we have 
     $$x_{\alpha_i +1}=y_{i,1}, \ldots, x_{\alpha_i + t_i}=y_{i,t_i},   \ \ \ \  x_{\alpha_i + t_i+1}=z_{i,1}, \ldots, x_{\alpha_i+ t_i+r_i}=z_{1,r_i}.$$

    So the relabeled form of $I$ is
    $$m'_1(x_{t_1+1},\ldots,x_{t_1+r_1}) + \cdots + m'_1m'_2\cdots
    m'_s(x_{\alpha_s+t_s+1},\ldots,x_{\alpha_s+t_s+r_s}),$$ where
    $m'_i=x_{\alpha_i+1} \cdots x_{\alpha_i+t_i}$ for $1 \leq i \leq
    s$.

This latter ideal is clearly square-free strongly stable. To see this,
take any monomial generator of the form $M=m'_1m'_2\cdots m'_v
x_{\alpha_v+t_v+u}$. Suppose $x_i|M$, $j<i$ and $x_j \nmid M$. Then
$j=\alpha_w+t_w+l$, where $w\leq v$ and $\left \{
         \begin{array}{ll} 
          1\leq l<u & \text{if }w=v,\\
          1\leq l\leq r_w & \text{if }w<v.
         \end{array}\right.$

    Since $m'_1m'_2\cdots m'_w x_{\alpha_w+t_w+l}$ is a generator, the
    monomial $x_j(M/x_i)=m'_1m'_2\cdots m'_v x_{\alpha_w+t_w+l}$
    belongs to $I$, and  we are done.

    Now $\N(I)$ is isomorphic to the Stanley-Reisner complex of a
    square-free strongly stable ideal, and is therefore chordal by
    \cref{SS}.
  \end{proof}

\Cref{Gotz} can also be proved directly, because of the nice inductive structure that square-free Gotzmann ideals have.

\section{Further questions and remarks  }

\begin{Remark} 
  It is well-known~\cite{Di} that any chordal graph has at least two
  simplicial vertices.  Equivalently, the flag complex of a chordal
  graph (which is a $1$-closure) has at least two simplicial faces
  which are not facets.  One may ask if the same holds for the
  $d$-closure of an arbitrary $d$-chordal simplicial complex.
   Theorem~2.3 of~\cite{ABH} implies that for any $d>1$
  there is a $d$-dimensional simplicial complex $\Gamma$ which is
  $d$-collapsible and has only one free face $E$ of dimension $d-1$
  which is not a facet.  It turns out that $E$, being contained in a
  single $d$-dimensional facet of $\Gamma$, is a simplicial face of
  $\Delta_d(\Gamma)$ which is not a facet.  By \cref{d-collapsible
    skeleton}, we know that $\Delta_d(\Gamma)$ is $d$-collapsible.
  Now \cref{d-collapsible=chordal} implies that $\Delta_d(\Gamma)$ is
  $d$-chordal with $E$ as its non-facet simplicial face.

  \Cref{a complex with one free face} is an example of the complexes constructed in
   Theorem~2.3 of~\cite{ABH}. It is a $2$-dimensional
  $2$-collapsible complex $\Gamma$ with $\{1,2\}$ as its unique free
  face. Then $\Delta_2(\Gamma)=\Gamma\cup\{\{3,5\}, \{5,7\}\}$ is
  $2$-chordal with $\{1,2\}$ as a simplicial face, by above
  argument. It is easy to check that indeed, $\{1,2\}$ is the unique
  non-facet simplicial face of the complex $\Delta_2(\Gamma)$. So the
  answer to the above question is negative in general.

			\begin{figure}[ht!]
			\begin{center}
			\begin{tikzpicture}[line cap=round,line join=round,>=triangle 45,x=0.8cm,y=0.8cm]
\clip(1.,1.2) rectangle (7.,6.8);
\fill[line width=1pt,fill=black,fill opacity=0.10000000149011612] (2.,2.) -- (6.,2.) -- (6.,6.) -- (2.,6.) -- cycle;
\fill[line width=1pt,fill=black,fill opacity=0.10000000149011612] (3.,4.) -- (4.,5.) -- (5.,4.) -- (4.,3.) -- cycle;
\draw [line width=1.pt] (2.,2.)-- (6.,2.);
\draw [line width=1.pt] (6.,2.)-- (6.,6.);
\draw [line width=1.pt] (6.,6.)-- (2.,6.);
\draw [line width=1.pt] (2.,6.)-- (2.,2.);
\draw [line width=1.pt] (2.,4.)-- (6.,4.);
\draw [line width=1.pt] (3.,4.)-- (4.,5.);
\draw [line width=1.pt] (4.,5.)-- (5.,4.);
\draw [line width=1.pt] (5.,4.)-- (4.,3.);
\draw [line width=1.pt] (4.,3.)-- (3.,4.);
\draw [line width=1.pt] (4.,5.)-- (4.,6.);
\draw [line width=1.pt] (4.,5.)-- (2.,6.);
\draw [line width=1.pt] (3.,4.)-- (2.,6.);
\draw [line width=1.pt] (4.,5.)-- (6.,6.);
\draw [line width=1.pt] (5.,4.)-- (6.,6.);
\draw [line width=1.pt] (5.,4.)-- (6.,2.);
\draw [line width=1.pt] (6.,2.)-- (4.,3.);
\draw [line width=1.pt] (4.,3.)-- (2.,2.);
\draw [line width=1.pt] (2.,2.)-- (3.,4.);
\draw (1.55,6.46) node[anchor=north west] {\begin{scriptsize}$2$\end{scriptsize}};
\draw (3.74,6.55) node[anchor=north west] {\begin{scriptsize}$3$\end{scriptsize}};
\draw (5.94,6.46) node[anchor=north west] {\begin{scriptsize}$1$\end{scriptsize}};
\draw (3.86,5.6) node[anchor=north west] {\begin{scriptsize}$7$\end{scriptsize}};
\draw (1.55,4.3) node[anchor=north west] {\begin{scriptsize}$3$\end{scriptsize}};;
\draw (2.75,4.65) node[anchor=north west] {\begin{scriptsize}$4$\end{scriptsize}};
\draw (4.7,4.65) node[anchor=north west] {\begin{scriptsize}$6$\end{scriptsize}};
\draw (5.94,4.3) node[anchor=north west] {\begin{scriptsize}$3$\end{scriptsize}};
\draw (3.75,3.) node[anchor=north west] {\begin{scriptsize}$5$\end{scriptsize}};
\draw (1.55,2.2) node[anchor=north west] {\begin{scriptsize}$1$\end{scriptsize}};
\draw (5.94,2.2) node[anchor=north west] {\begin{scriptsize}$2$\end{scriptsize}};
\begin{scriptsize}
\draw [fill=black] (2.,2.) circle (1.5pt);
\draw [fill=black] (6.,2.) circle (1.5pt);
\draw [fill=black] (6.,6.) circle (1.5pt);
\draw [fill=black] (2.,6.) circle (1.5pt);
\draw [fill=black] (2.,4.) circle (1.5pt);
\draw [fill=black] (6.,4.) circle (1.5pt);
\draw [fill=black] (3.,4.) circle (1.5pt);
\draw [fill=black] (4.,5.) circle (1.5pt);
\draw [fill=black] (5.,4.) circle (1.5pt);
\draw [fill=black] (4.,3.) circle (1.5pt);
\draw [fill=black] (4.,6.) circle (1.5pt);
\end{scriptsize}
\end{tikzpicture}
\caption{A $2$-chordal complex with $\{1,2\}$ as the unique simplicial (non-facet) face of its $2$-closure}
			\label{a complex with one free face}
			\end{center}
			\end{figure}
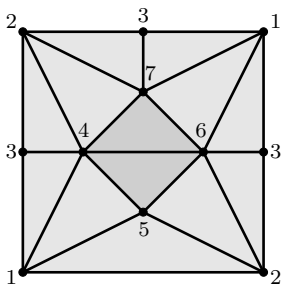
\end{Remark}


\begin{Remark} The class of $d$-chordal complexes includes
      nonshellable ones.  Setting $\Gamma$ to be the triangulation of
      the dunce hat in \cref{duncehat}, it is well known that $\Gamma$
      is a Cohen-Macaulay non-shellable complex while
      \cite[Example~3.14]{BYZ} implies that $\Gamma^\vee$, which is a
      $4$-closure, is chordal.
    \end{Remark}

The following question is then a natural one.

\begin{Question} A large combinatorial class of simplicial complexes
  whose Stanley-Reisner ideals are componentwise linear are Alexander
  duals of shellable complexes.  Are duals of shellable complexes
  chordal?  Since shellability reduces to the pure skeletons \cite[Theorem~2.9]{BWI}, it is enough to ask the question in pure case. 
    Equivalently one can ask: is the Stanley-Reisner complex of an 
    ideal equigenerated  in degree $d+1$ with linear quotients (\cite{HT}) $d$-chordal? (See
    also~\cite[Proposition~8.2.5]{HHBook}.)
\end{Question}

\begin{Question} Not all free faces of a $d$-collapsible
    complex $\Gamma$ can be the starting face of a free sequence which
    reduces $\Gamma$ to $\emptyset$. Tancer~\cite{T} constructs
    $d$-collapsible complexes $\Gamma$ with a free face $E$ (which he
    calls a ``bad'' face) such that $\Gamma\coll{E}$ is not
    $d$-collapsible.  What about the case of $d$-chordal complexes:
    given a $d$-chordal $d$-closure $\Gamma$ and a simplicial face
    $E$, is $\Gamma\sr{E}$ always $d$-chordal?
\end{Question}


\end{document}